\newtheorem{theo}{Theorem}[section]
\newtheorem{cor}[theo]{Corollary}
\newtheorem{exa}[theo]{Example}
\newtheorem{lem}[theo]{Lemma}
\newtheorem{prop}[theo]{Proposition}
\newtheorem{rem}[theo]{Remark}
\newtheorem{em-bez}[theo]{Notation}
\newtheorem{em-ex}[theo]{Examples and Remarks}
\newtheorem{em-remarks}[theo]{}
\newtheorem{em-re}[theo]{Remark}
\newtheorem{em-defi}[theo]{Definition}
\newenvironment{defi}{\begin{em-defi}\em}{\end{em-defi}}
\newcommand{\converges}[1]{\mathop{\overset{{\scriptscriptstyle\text{#1}}}{\longrightarrow}}}
\newcommand{\sss}[1]{{\scriptscriptstyle #1}}
\newcommand\restr[2]{{% we make the whole thing an ordinary symbol
  \left.\kern-\nulldelimiterspace % automatically resize the bar with \right
  #1 % the function
  \vphantom{\mid} % pretend it's a little taller at normal size
  \right|_{#2} % this is the delimiter
  }}
\begin{document}

\title{Lattice uniformities inducing unbounded convergence}

\author{Kevin Abela}
\address{
Kevin Abela \\
Department of Mathematics \\
Faculty of Science \\
University of Malta \\
Msida MSD 2080, Malta} \email {kevin.abela.11@um.edu.mt}

\author{Emmanuel Chetcuti}
\address{
Emmanuel Chetcuti,
Department of Mathematics\\
Faculty of Science\\
University of Malta\\
Msida MSD 2080  Malta} \email {emanuel.chetcuti@um.edu.mt}

\author{Hans Weber}
\address{Hans Weber\\
Dipartimento di Scienze Matematiche, Informatiche e
           Fisiche\\
Universit\`a degli Studi di Udine\\
I-33100 Udine\\
Italia}
\email{hans.weber@uniud.it}

\date{\today}
\begin{abstract}
 A net $(x_\gamma)_{\gamma\in\Gamma}$ in a locally solid Riesz space $(X,\tau)$ is said to be unbounded $\tau$-convergent to $x$ if $|x_\gamma-x|\wedge u\mathop{\overset{\tau}{\longrightarrow}} 0$ for all $u\in X_+$.   We recall that there is a locally solid linear topology $\mathfrak{u}\tau$ on $X$ such that unbounded $\tau$-convergence coincides with $\mathfrak{u}\tau$-convergence, and moreover, $\mathfrak{u}\tau$ is characterised as the weakest locally solid linear topology which coincides with $\tau$ on all order bounded subsets.     It is with this motivation that we introduce, for a  uniform lattice $(L,u)$, the weakest lattice uniformity $u^\ast$ on $L$ that coincides with $u$ on all the order bounded subsets of $L$.  
 %The $u^*$-topology is then the weakest lattice topology (i.e. making $\vee$ and $\wedge$ continuous) which coincides with the $u$-topology on all order bounded subsets of $L$.  (We recall that a lattice uniformity on a lattice $L$ is a uniformity $u$ on $L$ which makes the lattice operations $\vee$  and $\wedge$ uniformly continuous.)    
 It is shown that if $u$ is the uniformity induced by the topology of a locally solid Riesz space $(X,\tau)$, then the $u^*$-topology coincides with $\mathfrak{u}\tau$. This allows comparing results of this paper with earlier results on unbounded $\tau$-convergence.  It will be seen  that despite the fact that in the setup of uniform lattices most of the machinery used in the techniques of \cite{T} is lacking, the concept of `unbounded convergence' well fittingly generalizes to uniform lattices.  We shall also answer Questions 2.13, 3.3, 5.10 of \cite{T} and Question 18.51 of \cite{TaylorThesis}.
\end{abstract}
\subjclass[2000]{06B30, 06F30, 54H12, 46A40}
\keywords{uniform lattices, l-groups, order convergence, unbounded convergence, locally solid topologies}
\maketitle

\section{Introduction}
 Uniform lattices were introduced in \cite{W91,W93} as a generalization of topological Boolean rings (= Boolean rings endowed with an FN-topology) and of locally solid Riesz spaces or, more general, of locally solid $\ell$-groups. A uniform lattice is a lattice with a lattice uniformity, i.e. a uniformity making the lattice operations $\vee$ and $\wedge$ uniformly continuous. The uniformity induced by a group topology $\tau$ on a Boolean ring is a lattice uniformity iff $\tau$ is an FN-topology (in the sense of \cite{D}); the right (left or two-side) uniformity induced by a group topology $\tau$ on an $\ell$-group is a lattice uniformity iff $\tau$ is locally solid (see \cite[Propositions 1.1.7 and 1.1.8]{W91}).

Recently several authors have studied in a locally solid Riesz space $(X,\tau)$ the concept of  `unbounded $\tau$-convergence' of a net $(x_\alpha)_{\alpha\in A}$ to $x$,
i.e. $|x_\alpha-x|\wedge u\mathop{\overset{\tau}{\longrightarrow}} 0$ for all $u\in X_+$. `Unbounded convergence' was studied in \cite{DOT, KMT, KT} when $\tau$ is the norm topology on a Banach lattice, and in \cite{T, DaEmMa2018}, more generally, when $\tau$ is a locally solid linear topology on a Riesz space. Taylor \cite[Theorem 2.3]{T} showed that there is a locally solid linear topology $\mathfrak{u}\tau$ on $X$ such that unbounded $\tau$-convergence coincides with $\mathfrak{u}\tau$-convergence. It can be shown that $\mathfrak{u}\tau$ is the weakest locally solid linear topology which coincides with $\tau$ on all order bounded subsets (cf. Corollary \ref{weakest}, Theorem \ref{t}). %(see \cite[Proposition 2.5]{T}).

This is the motivation to introduce here for a uniform lattice $(L,u)$ the weakest lattice uniformity $u^*$ on $L$ which coincides with $u$ on all order bounded subsets of $L$. The $u^*$-topology is then the weakest lattice topology (i.e. making $\vee$ and $\wedge$ continuous) which coincides with the $u$-topology on all order bounded subsets of $L$.
If $u$ is the uniformity induced by the topology of a locally solid Riesz space $(X,\tau)$, then the $u^*$-topology coincides with $\mathfrak{u}\tau$ (see Theorem \ref{cf}). This allows comparing results of this paper with earlier results on unbounded $\tau$-convergence.
Apart from showing how the concept of `unbounded convergence' well fittingly generalizes to uniform lattices, we shall also answer Questions 2.13, 3.3, 5.10 of \cite{T} and Question 18.51 of \cite{TaylorThesis}.

%the following questions: \begin{itemize}
%\item \cite[Question 5.10]{T} in Corollary \ref{c4};
%\item   \cite[Question 3.3]{T} in Theorem \ref{densegroup};
%\item \cite[Question 2.13]{T} in Theorem \ref{t4}.
%\end{itemize}

The paper is organized as follows.  In Section 2 we consider two modes of order convergence (Definition \ref{defi}) and prove a result (Theorem \ref{t3}) that allows us to answer \cite[Question 18.51]{TaylorThesis} and \cite[Question 5.10]{T} (see Corollaries \ref{c3} and \ref{c5}).  Section 3 is a summary of tools on uniform lattices
% -- developed mainly in \cite{W91,W93} --
 that are most relevant to our work.
 % This makes our study more self-contained and easier to follow.
 In Section 4 we extend the notion of unbounded convergence to uniform lattices and show that our definition is  a `faithful' generalization to the one that has been studied in the existing literature on locally solid Riesz spaces.  Section 5 is devoted to the study of the uniformity $u^*$ on sublattices. In particular, Theorems \ref{densegroup} and \ref{t4} answer Questions 3.3 and 2.13 of \cite{T}.

\section{Order convergence, Order topology and Order continuous functions}

Let $(P,\le)$ be a partially ordered set.  A subset $D$ of $P$ is \emph{directed} provided it is nonempty and every finite subset of $D$ has an upper bound in $D$.  Dually, a nonempty subset $F$ of $P$ is \emph{filtered} if every finite subset of $F$ has a lower bound in $F$. If every subset of $P$ that is bounded from above has a supremum and every subset of $P$ that is bounded from below has an infimum, $P$ is said to be \emph{Dedekind complete} (or conditionally complete). $P$ is \emph{complete} if every of its subsets has a supremum and infimum.

Let $(x_\gamma)_{\gamma\in \Gamma}$ be a net in a set $X$. If $\Gamma'$ is some directed set and $\varphi:\Gamma'\to\Gamma$ is increasing and final\footnote{i.e. for every $\gamma\in \Gamma$ there exists $\gamma'\in\Gamma'$ such that $\varphi(\gamma')\ge \gamma$}, then the net $(x_{\varphi(\gamma')})_{\gamma'\in\Gamma'}$ is called a \emph{subnet} of $(x_\gamma)_{\gamma\in \Gamma}$.
Let $P$ be a general property of nets. If there exists a $\gamma_0\in \Gamma$ such the subnet
$(x_\gamma)_{\Gamma\ni\gamma\ge\gamma_0}$ has the property $P$ , then we say that  \emph{$(x_\gamma)_{\gamma\in\Gamma}$ satisfies $P$ eventually}.
If $(x_\gamma)_{\gamma\in\Gamma}$ is  increasing, its supremum exists and equals $x$, we write $x_\gamma\uparrow x$.  Dually, $x_\gamma\downarrow x$ means that the net $(x_\gamma)_{\gamma\in\Gamma}$ is  decreasing with infimum equal to $x$.

 There are distinct definitions for order convergence that were  used for various applications.  We shall here be concerned with the following two modes of order convergence, namely $O_1$- and $O_2$-convergence.
 %{The motivation can be traced to the squeezing lemma, well known to every student of calculus: Let $(y_n)_{n\in\N}$ be   an increasing sequence  of real numbers  and $(z_n)_{n\in\N}$ be a decreasing  sequence  of real numbers such that $y_n\le z_n$ for all $n$. Suppose that  the supremum of $(y_n)_{n\in\N}$ is the same as the infimum of $(z_n)_{n\in\N}$ and equals to $\alpha$. Then, for any other sequence, $(x_n)_{n\in\N}$, squeezed between $(y_n)_{n\in\N}$ and $(z_n)_{n\in\N}$ (i.e. $y_n\le x_n \le z_n$ for all $n$)  we have that $(x_n)_{n\in\N}$ converges to $\alpha$.}
For every $a,b\in P$  define $[a,b]:=\{x\in P:a\le x\le b\}$.

 \begin{defi}\label{defi}
   Let $(x_\gamma)_{\gamma\in\Gamma}$ be a net and  $x$ a point in a poset $P$.
   \begin{enumerate}[{\rm(i)}]
     \item $(x_\gamma)_{\gamma\in\Gamma}$ is said to \emph{$O_1$-converge} to $x$ in $P$ if there exist two nets $(y_\gamma)_{\gamma\in\Gamma}$, $(z_\gamma)_{\gamma\in\Gamma}$ in $P$ such that eventually $y_\gamma\le x_\gamma\le z_\gamma$, $y_\gamma\uparrow x$ and $z_\gamma\downarrow x$.
     \item $(x_\gamma)_{\gamma\in \Gamma}$ is said to \emph{$O_2$-converge} to $x$ in $P$ if  there exists a directed subset $M\subseteq P$, and a filtered subset $N\subseteq P$,  such that $\sup M=\inf N=x$, and for every $(m,n)\in M\times N$ the net is eventually contained in $[m,n]$.
     %\item $(x_\gamma)_{\gamma\in \Gamma}$ is said to \emph{$O_3$-converge} to $x$ in $P$ if  there exist two subsets  $M$ and  $N$ of $ P$ such that $\vee M=\wedge N=x$, and for every $ (m,n)\in M\times N$ the net is eventually contained in $[m,n]$.
   \end{enumerate}
 \end{defi}

 The notion of $O_1$-convergence can be traced back to Birkhoff \cite{Birkhoff1940} and Kantorovich \cite{Kantorovich1935}.  In contrast to $O_1$-convergence, in $O_2$-convergence, the controlling nets are not indexed by the same directed set of the original net.  $O_2$-convergence is due to McShane \cite{McShane1953}.
 %The notion of $O_3$-convergence can be found in \cite{Wolk1961} whereby he gives references to \cite{Rennie1951} and \cite{Ward1955}.
 It is easy to see for $i\in\{1,2\}$ that:
  \begin{itemize}
  \item For a net $(x_\gamma)_{\gamma\in\Gamma}$, and point $x$, $x_\gamma\converges{\text{$O_1$}} x\ \Rightarrow\  x_\gamma\converges{\text{$O_2$}}x$.
 \item Every $O_i$-convergent net is eventually order bounded and every eventually constant net is $O_1$-convergent to its eventual value.
 \item Every  subnet of an $O_i$-convergent net is $O_i$-convergent to the same limit.
 \item For a monotone net the two types of order convergence coincide.
\item When $P$ is a Dedekind complete lattice, the expression `$(x_\gamma)_{\gamma\in\Gamma}$ $O_i$-converges to $x$'  conveys the intuitive meaning\footnote{Note that for an $O_i$-convergent net in a Dedekind complete lattice, the  suprema/infima involved in (\ref{e3}) exist eventually.}
\begin{equation}\label{e3}
  \liminf_\gamma x_\gamma=\sup_{\gamma\in\Gamma}\inf_{\gamma'\ge \gamma} x_{\gamma'}=x=\inf_{\gamma\in\Gamma}\sup_{\gamma'\ge \gamma} x_{\gamma'}=\limsup_\gamma x_\gamma.
\end{equation}
 \end{itemize}

The following example shows that -- even when the poset is a Boolean algebra or a Riesz space -- $O_2$-convergence does not imply $O_1$-convergence.  This example is credited to D. Fremlin \cite[Ex. 2, pg. 140]{ShaeferBLattices}.  We include it for completeness.

\begin{exa}[$O_2$-convergence is not the same as $O_1$-convergence]\label{exo1o2}
 Let $X$ be an uncountable set and $\mathcal{A}$  the algebra of all finite and cofinite subsets of $X$ (partially ordered by inclusion). Let $x_1,x_2, x_3,\dots$ be a sequence of distinct elements of $X$ and $A_n:=\{x_n\}$. Then the sequence $(A_n)_{n\in\mathbb N}$ O$_2$-converges in $\mathcal{A}$ to $\emptyset$ since the filtered family of all cofinite subsets of $X$ has infimum $\emptyset$, but is not O$_1$-convergent since any O$_1$-convergent sequence in $\mathcal{A}$ is eventually constant.

Let $S(\mathcal{A}):= span\{\chi_A :A\in\mathcal{A}\}$ be the Riesz space of $\mathbb{R}$-valued $\mathcal{A}$-simple functions. Then the sequence $(\chi_{A_n})_{n\in\mathbb N}$  O$_2$-converges  to $0$, but is not O$_1$-convergent in $S(\mathcal{A})$.
\end{exa}

 A subset $X$ of $P$ is said to be \emph{$O_i$-closed} if  there is no net in $X$ that is $O_i$-converging to a point outside of $X$.    In  \cite[Theorem 5]{AbChWe} it was shown that $X$ is $O_1$-closed iff it is $O_2$-closed.  The collection of all $O_1$-closed (= $O_2$-closed) subsets of $P$ forms the\emph{ order topology $\tau_O(P)$}. It is easily seen that $[a,b]$ is $O_1$-closed for every $a\le b \in P$.

Motivated by the notion of \emph{unbounded order convergence} (uo-convergence for short), studied for Riesz spaces in \cite{GaTrXa2017,GAOLEU17,GaXa,Ka99},  let us propose the following definition.

\begin{defi}
Let  $P$ be a poset and let $F\subseteq  P^P$.  Let $i\in\{1,2\}$.
\begin{enumerate}[{\rm(i)}]
  \item The net $(x_\gamma)_{\gamma\in\Gamma}$ is said to \emph{$FO_i$-converge to $x$ in $P$}  if $(f(x_\gamma))_{\gamma\in\Gamma}$ is $O_i$-convergent to $f(x)$ for every $f\in F$.
  \item A subset $X\subseteq P$ is said to be \emph{$FO_i$-closed} if there is no net in $X$ that is $FO_i$-converging to a point outside of $X$.
\end{enumerate}
\end{defi}

Clearly, $x_\gamma\converges{$FO_1$} x\ \Rightarrow\  x_\gamma\converges{$FO_2$}x$,  for every net $(x_\gamma)_{\gamma\in \Gamma}$ and $x$ in the poset $P$, and $F\subseteq P^P$.
Although $FO_1$-convergence is different from $FO_2$-convergence,  we shall prove the following result that will  play an important role in order to  answer  \cite[Question 5.10]{T}.

\begin{theo}\label{t3}  Let $P$ be a poset and  $F\subseteq P^P$.  Let $(x_\gamma)_{\gamma\in\Gamma}$ be a net in $P$ that  $F{O}_2$-converges to $x\in P$.  Then $(x_\gamma)_{\gamma\in \Gamma}$ has a subnet that  $FO_1$-converges to $x$.  \\
In particular, if a net $(x_\gamma)_{\gamma\in\Gamma}$ of $P$ is ${O}_2$-convergent to $x\in P$,  then $(x_\gamma)_{\gamma\in \Gamma}$ has a subnet that  $O_1$-converges to $x$.
\end{theo}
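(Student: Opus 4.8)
The plan is to prove the general $FO_2\Rightarrow$ subnet-$FO_1$ statement directly; the \emph{in particular} clause then follows at once by taking $F=\{\mathrm{id}_P\}$. Since $(x_\gamma)_{\gamma\in\Gamma}$ $FO_2$-converges to $x$, for each $f\in F$ the net $(f(x_\gamma))_{\gamma}$ $O_2$-converges to $f(x)$, so I may fix a directed set $M_f\subseteq P$ and a filtered set $N_f\subseteq P$ with $\sup M_f=\inf N_f=f(x)$ and with the property that for every $(m,n)\in M_f\times N_f$ one has $f(x_\gamma)\in[m,n]$ eventually. The whole difficulty is that $O_1$-convergence demands sandwiching nets indexed by the \emph{same} directed set as the (sub)net, so I must produce a \emph{single} subnet that simultaneously sandwiches $f(x_\gamma)$ for \emph{all} $f\in F$ at once, the hard point being that $F$ may be infinite.

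To achieve this I would take as the index set of the subnet the set $\Delta$ of all quadruples $(\gamma,\mathbf m,\mathbf n,G)$, where $G$ is a \emph{finite} subset of $F$, $\mathbf m=(m_f)_{f\in F}\in\prod_{f\in F}M_f$, $\mathbf n=(n_f)_{f\in F}\in\prod_{f\in F}N_f$ and $\gamma\in\Gamma$, subject to the single constraint that $f(x_\gamma)\in[m_f,n_f]$ for every $f\in G$. Order $\Delta$ by declaring $(\gamma,\mathbf m,\mathbf n,G)\le(\gamma',\mathbf m',\mathbf n',G')$ iff $\gamma\le\gamma'$, $m_f\le m'_f$ and $n_f\ge n'_f$ for all $f$, and $G\subseteq G'$. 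The finite coordinate $G$ is precisely the device reconciling the two competing requirements: it lets the containment constraint be switched on for each individual $f$, while its being allowed to grow only through \emph{finite} sets keeps $\Delta$ directed even when $F$ is infinite.

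The verification then splits into three routine steps. First, $\Delta$ is directed: given two quadruples, put $G_3:=G_1\cup G_2$, choose $\mathbf m^3,\mathbf n^3$ coordinatewise above (resp. below) the given data using that each $M_f$ is directed and each $N_f$ is filtered, and --- because $G_3$ is \emph{finite} --- invoke the eventual-containment property for the finitely many pairs $(m^3_f,n^3_f)$, $f\in G_3$, together with directedness of $\Gamma$, to obtain a single $\gamma_3$ dominating $\gamma_1,\gamma_2$ with $f(x_{\gamma_3})\in[m^3_f,n^3_f]$ for all $f\in G_3$. Second, the projection $\varphi(\gamma,\mathbf m,\mathbf n,G):=\gamma$ is increasing, and it is final because for any $\gamma^\ast\in\Gamma$ the quadruple $(\gamma^\ast,\mathbf m,\mathbf n,\emptyset)$ lies in $\Delta$ vacuously; hence $(x_{\varphi(\delta)})_{\delta\in\Delta}$ is a genuine subnet of $(x_\gamma)$.

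Finally, for each fixed $f\in F$ I would set $y^f_\delta:=m_f$ and $z^f_\delta:=n_f$ for $\delta=(\gamma,\mathbf m,\mathbf n,G)$. These are monotone in $\delta$, and since the $f$-coordinates of $\mathbf m$ and $\mathbf n$ range over all of $M_f$ and $N_f$ (realized already by quadruples with $G=\emptyset$), one gets $y^f_\delta\uparrow\sup M_f=f(x)$ and $z^f_\delta\downarrow\inf N_f=f(x)$. The sandwich $y^f_\delta\le f(x_{\varphi(\delta)})\le z^f_\delta$ holds \emph{eventually}: picking (via eventual containment) any $\delta_0\in\Delta$ whose finite coordinate contains $f$, every $\delta\ge\delta_0$ has $f$ in its $G$-coordinate, so the defining constraint of $\Delta$ yields exactly $m_f\le f(x_\gamma)\le n_f$. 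Thus $(f(x_{\varphi(\delta)}))_\delta$ $O_1$-converges to $f(x)$ for every $f\in F$, i.e. the subnet $FO_1$-converges to $x$. As indicated, the sole obstacle is the simultaneity over an arbitrary, possibly infinite, family $F$, and it is overcome entirely by the finite-set coordinate $G$.
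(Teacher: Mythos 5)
Your proof is correct and takes essentially the same route as the paper's: both build a directed index set out of triples consisting of a \emph{finite} subset of $F$, choices from the $M_f$'s and $N_f$'s, and an element of $\Gamma$ constrained so that the containments hold, and then project to $\Gamma$ to get the subnet. The only cosmetic differences are that you carry total choice functions $\mathbf m\in\prod_{f\in F}M_f$ (so your sandwiching nets are defined on all of $\Delta$ and you invoke ``eventually'' directly), whereas the paper uses functions defined only on the finite set $F_0$ and transfers $O_1$-convergence from the tail set $\Lambda_f$ to $\Lambda$.
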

\begin{proof}
  For every $f\in F$ let $M_f$ and $N_f$ be the directed, and respectively, the filtered subsets of $P$  `witnessing' the $O_2$-convergence of $(f(x_\gamma))_{\gamma\in\Gamma}$ to $f(x)$, i.e. $M_f$ and $N_f$ satisfy: {\rm(i)}~$\sup M_f=\inf N_f=f(x)$, and {\rm(ii)}~$(f(x_\gamma))_{\gamma\in\Gamma}$ is eventually in $[m,n]$ for every $m\in M_f$ and $n\in N_f$.

  For every finite subset $F_0$ of $F$, let $M_{F_0}$ denote the set of all $g\in P^{F_0}$ satisfying $g(f)\in M_f$ for every $f\in F_0$; and similarly, let $N_{F_0}$ denote the set of all $h\in P^{F_0}$ satisfying $h(f)\in N_f$ for every $f\in F_0$.  Define
  \[\Upsilon:=\{(F_0, g, h)\,:\,F_0\subseteq F,\, |F_0|<\infty,\, g\in M_{F_0},\, h\in N_{F_0}\}.\]
  It can easily be verified that the relation
  \[(F_0,g,h)\le (F_0',g',h')\  \leftrightarrow\ F_0\subseteq F_0'\  \mbox{and}\ g(f)\le g'(f),\ h(f)\ge h'(f)\ \forall f\in F_0\]
  induces a partial order on $\Upsilon$ and that $(\Upsilon,\le)$ is a directed set.   Moreover, for every $(F_0,g,h)\in\Upsilon$, there exists $\gamma(F_0,g,h)\in \Gamma$ such that $f(x_\gamma)\in [g(f), h(f)]$ for every $f\in F_0$ and for every $\gamma\ge \gamma(F_0,g,h)$.

  The product $\Upsilon\times \Gamma$ of the two directed sets $\Upsilon$ and $\Gamma$ is again a directed set and the subset $\Lambda\subseteq\Upsilon\times \Gamma$ consisting of all $(F_0,g,h,\gamma):=\bigl((F_0,g,h),\gamma\bigr)\in \Upsilon\times \Gamma$ satisfying $\gamma\ge \gamma(F_0,g,h)$ is cofinal in $\Upsilon\times\Gamma$.  Hence, $\Lambda$ is a directed set when equipped with the partial order inherited from $\Upsilon\times\Gamma$.  One can easily verify that the projection $\varphi:(F_0,g,h,\gamma)\mapsto \gamma$ is isotone and final; i.e. the net $\bigl(x_{\varphi(\lambda)}\bigr)_{\lambda\in\Lambda}$ is a subnet of $(x_\gamma)_{\gamma\in\Gamma}$.

   We show that the net $(x_{\varphi(\lambda)})_{\lambda\in\Lambda}$ is $FO_1$-convergent to $x$.  Fix an arbitrary $f\in F$.    Let $\Lambda_f:=\{(F_0,g,h,\gamma)\in\Lambda:f\in F_0\}$ and for every $(F_0,g,h,\gamma)\in \Lambda_f$ define $m(F_0,g,h,\gamma):=g(f)$ and $n(F_0,g,h,\gamma):=h(f)$.  By construction,
the net $(m(\lambda))_{\lambda\in\Lambda_f}$ is increasing, the net $(n(\lambda))_{\lambda\in\Lambda_f}$ is decreasing and $m(\lambda)\leq f(x(\lambda))\leq n(\lambda)$ for all $\lambda\in\Lambda_f$. Moreover,
   \begin{gather*}
 \sup\{m(\lambda):\lambda\in \Lambda_f\}=\sup M_f =f(x),\\
 \inf\{n(\lambda):\lambda\in \Lambda_f\}=\inf N_f =f(x).
 \end{gather*}
Therefore $\bigl(f(x_{\varphi(\lambda)})\bigr)_{\lambda\in \Lambda_f}$ is $O_1$-convergent to $f(x)$.
Since $\{\lambda\in\Lambda : \lambda\geq\lambda_0\}=\{\lambda\in\Lambda_f : \lambda\geq\lambda_0\}$ for $\lambda_0\in \Lambda_f$, also $\bigl(f(x_{\varphi(\lambda)})\bigr)_{\lambda\in \Lambda}$ is $O_1$-convergent to $f(x)$.
 This shows that  $(x_{\varphi(\lambda)})_{\lambda\in\Lambda}$ is $FO_1$-convergent to $x$.
 In particular, by setting $F$ equal to the singleton set containing just the identity function on $P$, one obtains that every net of $P$ that is $O_2$-convergent to some point admits a subnet that is $O_1$-convergent to the same point.
  \end{proof}

  \begin{cor}\label{c3} Let $P$ be a poset and  $F\subseteq P^P$.
  \begin{enumerate}[{\rm(i)}]
    \item For $X\subseteq P$, denote by $\bar{X}^{FO_i}$ the set of $x\in P$ such that there is a net in $X$ which $FO_i$-converges to $x$. Then $\bar{X}^{FO_1}=\bar{X}^{FO_2}$.
    \item A subset $X\subseteq P$ is $FO_1$-closed iff it is $FO_2$-closed.  (In particular, by setting $F$ equal to the singleton set containing the identity map on $P$, one recovers \cite[Theorem 6]{AbChWe}.)
    \item The collection of all $FO_1$-closed (=$FO_2$-closed) subsets of $P$ forms a topology on $L$.  Denote this topology by $\tau_{FO}(P)$.
    \item Let $(Y,\tau)$ be a topological space.  For every $\varphi:P\to Y$  the following assertions are equivalent:
    \begin{enumerate}[{\rm(a)}]
      \item $\varphi$ is $\tau_{FO}(P)-\mathcal \tau$-continuous;
      \item If $(x_\gamma)_{\gamma\in \Gamma}$ is $FO_2$-convergent to $x$ in $P$, then $(\varphi(x_\gamma))_{\gamma\in\Gamma}$ is convergent to $\varphi(x)$ w.r.t. $\tau$;
      \item If $(x_\gamma)_{\gamma\in \Gamma}$ is $FO_1$-convergent to $x$ in $P$, then
      $(\varphi(x_\gamma))_{\gamma\in\Gamma}$ is convergent to $\varphi(x)$ w.r.t. $\tau$.
      \end{enumerate}
  \end{enumerate}
\end{cor}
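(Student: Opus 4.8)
The plan is to deduce all four assertions from Theorem~\ref{t3} together with the elementary properties of $O_i$-convergence recorded above; the only substantive ingredient is Theorem~\ref{t3}, and what remains is topological bookkeeping that hinges on passing to subnets supported on cofinal index sets. For (i), the inclusion $\bar X^{FO_1}\subseteq\bar X^{FO_2}$ is immediate from the implication $FO_1\Rightarrow FO_2$ noted before Theorem~\ref{t3}: a net in $X$ that $FO_1$-converges to $x$ also $FO_2$-converges to $x$. Conversely, if $x\in\bar X^{FO_2}$, I would pick a net in $X$ that $FO_2$-converges to $x$; by Theorem~\ref{t3} it has a subnet, again a net in $X$, that $FO_1$-converges to $x$, so $x\in\bar X^{FO_1}$. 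For (ii), I would first observe that, since every eventually constant net is $O_1$-convergent to its eventual value, one always has $X\subseteq\bar X^{FO_i}$; hence $X$ is $FO_i$-closed precisely when $\bar X^{FO_i}=X$, and the equivalence of $FO_1$- and $FO_2$-closedness then follows at once from (i).

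For (iii), I would verify the three closure axioms for the family of $FO_1$-closed sets. That $\emptyset$ and $P$ belong to the family, and that the family is stable under arbitrary intersections, is routine: a net lying in an intersection lies in each member, and a net in $P$ can converge only within $P$. The only delicate point is stability under finite unions. Given $FO_1$-closed sets $X_1,X_2$ and a net $(x_\gamma)_{\gamma\in\Gamma}$ in $X_1\cup X_2$ that $FO_1$-converges to $x$, at least one of the sets $A_k:=\{\gamma:x_\gamma\in X_k\}$ $(k=1,2)$ is cofinal in $\Gamma$, for otherwise the net would eventually avoid $X_1\cup X_2$. A cofinal subset of a directed set is itself directed and its inclusion is increasing and final, so $(x_\gamma)_{\gamma\in A_k}$ is a subnet in the sense of the paper; it lies in $X_k$ and, being a subnet of an $FO_1$-convergent net, $FO_1$-converges to $x$. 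Closedness of $X_k$ then forces $x\in X_k\subseteq X_1\cup X_2$, as required. By (ii) the same family is obtained from the $FO_2$-closed sets.

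For (iv), I would run the cycle (a)$\Rightarrow$(b)$\Rightarrow$(c)$\Rightarrow$(a). Assuming (a), let $(x_\gamma)$ $FO_2$-converge to $x$; if $\varphi(x_\gamma)\not\to\varphi(x)$, there is a $\tau$-open $V\ni\varphi(x)$ that $\varphi(x_\gamma)$ misses frequently, so $A:=\{\gamma:\varphi(x_\gamma)\in Y\setminus V\}$ is cofinal. The corresponding subnet lies in $\varphi^{-1}(Y\setminus V)$, which is $\tau_{FO}(P)$-closed, hence $FO_2$-closed by (ii), by continuity, and it still $FO_2$-converges to $x$; thus $\varphi(x)\in Y\setminus V$, a contradiction. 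The implication (b)$\Rightarrow$(c) is immediate from $FO_1\Rightarrow FO_2$. Finally, assuming (c), to prove (a) it suffices to show that $\varphi^{-1}(C)$ is $FO_1$-closed (equivalently $\tau_{FO}(P)$-closed) for every $\tau$-closed $C\subseteq Y$: any net in $\varphi^{-1}(C)$ that $FO_1$-converges to $x$ is mapped by (c) to a $\tau$-convergent net in $C$ with limit $\varphi(x)$, and closedness of $C$ gives $\varphi(x)\in C$, i.e.\ $x\in\varphi^{-1}(C)$.

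The main obstacle is not conceptual but a matter of care: one must consistently pass between \emph{frequent} and \emph{eventual} behaviour by extracting subnets supported on cofinal index sets, and check that such subnets are legitimate under the paper's definition, which demands an increasing and final index map. Once this is in place, Theorem~\ref{t3} supplies the single nontrivial step---the extraction of an $FO_1$-convergent subnet from an $FO_2$-convergent net---and every other implication follows formally from the stability of $O_i$-convergence under subnets and the behaviour of $\tau$-closed sets under net limits.
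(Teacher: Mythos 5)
Your proof is correct and follows essentially the same route as the paper: part (i) from Theorem~\ref{t3} plus the trivial implication $FO_1\Rightarrow FO_2$, part (ii) from (i), part (iii) by checking the closure axioms via cofinal-subnet extraction, and (c)$\Rightarrow$(a) of (iv) by the preimage-of-a-closed-set argument, which is the one step the paper writes out in full. The paper merely compresses the remaining steps to ``routine'' and ``trivial''; your careful handling of frequent versus eventual behaviour and of the legitimacy of subnets indexed by cofinal subsets supplies exactly the bookkeeping the authors omit.
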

\begin{proof}
{\rm(i)}~is a direct consequence of Theorem \ref{t3}. {\rm(ii)} follows from {\rm(i)}.  The proof of {\rm(iii)} is routine.  The implications {\rm(a)}$\Rightarrow${\rm(b)}$\Rightarrow${\rm(c)} of {\rm(iv)} are trivial.  To show {\rm(c)}$\Rightarrow${\rm(a)} observe that if $X\subseteq Y$ is $\mathcal T$-closed, and $(x_\gamma)_{\gamma\in\Gamma}$ is a net in $\varphi^{-1}X$ that is $FO_1$-convergent to $x$, then $\varphi(x)\in X$ by {\rm(c)}.  So, $x\in\varphi^{-1}X$, i.e. $\varphi^{-1}X$ is $FO_1$-closed (i.e. $\tau_{FO}(P)$-closed).
\end{proof}

The notion of unbounded order convergence on Riesz spaces has received a considerable attention.  For general results on this topic the reader is referred to \cite{GaTrXa2017} and \cite{GaXa}.  Let us recall that the notion of unbounded order convergence  is an abstraction of almost everywhere convergence in function spaces and originally goes back to \cite{NAK48}.

Unbounded order convergence can be generalized for lattices: Let $L$ be a lattice and define  $f_{s,t}$ on $L$ by  $f_{s,t}(x):=(x\wedge t)\vee s$  for  $s,t,x\in L$. If $F=\{f_{s,t}:s,t\in L\;,\; s\leq t\}$, we write $x_\gamma\converges{\text{$\mathfrak{u}O_i$}} x$ instead of $x_\gamma\converges{\text{$FO_i$}} x$ and $\mathfrak{u}O$-closed instead of $FO_1$-closed. In view of Proposition \ref{p3}, $\mathfrak{u}O_2$-convergence in Riesz spaces coincides with unbounded order convergence as defined e.g. in \cite{GaTrXa2017}.

\begin{cor}\label{c5} Let $\tau$ be a topology on a lattice $L$. Then the following conditions are equivalent:
\begin{itemize}
  \item $x_\gamma\converges{\text{$\mathfrak{u}O_1$}} x\ \Rightarrow\ x_\gamma\converges{\text{$\tau$}} x$,
  \item $x_\gamma\converges{\text{$\mathfrak{u}O_2$}} x\ \Rightarrow\ x_\gamma\converges{\text{$\tau$}} x$.
\end{itemize}
\end{cor}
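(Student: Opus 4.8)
The plan is to dispatch one implication trivially and to obtain the other from Theorem \ref{t3} by a standard subnet argument (or, more slickly, to read the whole statement off from Corollary \ref{c3}(iv)). Throughout, recall that $\mathfrak{u}O_i$-convergence is exactly $FO_i$-convergence for $F=\{f_{s,t}:s,t\in L,\ s\le t\}$. The implication ``$x_\gamma\converges{\text{$\mathfrak{u}O_2$}} x\Rightarrow x_\gamma\converges{\text{$\tau$}} x$'' $\Longrightarrow$ ``$x_\gamma\converges{\text{$\mathfrak{u}O_1$}} x\Rightarrow x_\gamma\converges{\text{$\tau$}} x$'' is immediate, since every $\mathfrak{u}O_1$-convergent net is $\mathfrak{u}O_2$-convergent (noted right after the definition of $FO_i$-convergence); thus if the first condition fails to force anything new, the second already subsumes it.

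For the converse, I would assume that $x_\gamma\converges{\text{$\mathfrak{u}O_1$}} x$ implies $x_\gamma\converges{\text{$\tau$}} x$, take a net with $x_\gamma\converges{\text{$\mathfrak{u}O_2$}} x$, and argue by contradiction. If $(x_\gamma)_{\gamma\in\Gamma}$ does not $\tau$-converge to $x$, there is a $\tau$-neighbourhood $U$ of $x$ with $\Gamma':=\{\gamma\in\Gamma:x_\gamma\notin U\}$ cofinal in $\Gamma$. Equipped with the induced order, $\Gamma'$ is directed (given $\gamma_1,\gamma_2\in\Gamma'$, choose an upper bound in $\Gamma$ and then dominate it by an element of $\Gamma'$ using cofinality), and the inclusion $\Gamma'\hookrightarrow\Gamma$ is increasing and final, so $(x_\gamma)_{\gamma\in\Gamma'}$ is a subnet lying \emph{entirely} outside $U$. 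Since subnets of $O_i$-convergent nets are $O_i$-convergent to the same limit (applied through each $f\in F$ to $(f(x_\gamma))$), this subnet still $\mathfrak{u}O_2$-converges to $x$. Theorem \ref{t3} now yields a further subnet that $\mathfrak{u}O_1$-converges to $x$, which by hypothesis $\tau$-converges to $x$; but all its terms lie outside $U$, so it cannot be eventually in the neighbourhood $U$ of $x$ — a contradiction. Hence $x_\gamma\converges{\text{$\tau$}} x$.

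The only genuinely load-bearing step is the invocation of Theorem \ref{t3}; the surrounding subnet bookkeeping (cofinal subsets are directed, $\mathfrak{u}O_2$-convergence is inherited by subnets) is routine. I would remark, finally, that the statement is in fact just the equivalence (b)$\Leftrightarrow$(c) of Corollary \ref{c3}(iv) specialised to $F=\{f_{s,t}:s\le t\in L\}$, target space $(L,\tau)$, and $\varphi=\mathrm{id}_L$: with this choice conditions (b) and (c) read verbatim as the two displayed implications, so Corollary \ref{c3}(iv) delivers the equivalence with no further work. Either presentation is acceptable; the subnet argument simply makes explicit the mechanism already packaged inside Corollary \ref{c3}.
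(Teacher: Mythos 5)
Your proposal is correct and matches the paper's (implicit) derivation: the corollary is exactly the equivalence (b)$\Leftrightarrow$(c) of Corollary \ref{c3}(iv), specialised to $F=\{f_{s,t}:s,t\in L,\ s\le t\}$, $\varphi=\mathrm{id}_L$ and target space $(L,\tau)$, which in turn rests on Theorem \ref{t3}. Your explicit subnet-by-contradiction argument is a sound unpacking of the same mechanism (the trivial direction following from $\mathfrak{u}O_1\Rightarrow\mathfrak{u}O_2$), so nothing further is needed.
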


The fact that the two conditions of Corollary \ref{c5} (called in \cite{T} the $\mathfrak{u}O_1$-Lebesgue property and the $\mathfrak{u}O_2$-Lebesgue property) are equivalent, answers \cite[Question  5.10]{T}.

Let $A$ be a solid subgroup of a commutative $\ell$-group $(G,+)$.  For $i\in\{1,2\}$, the net $(x_\gamma)_{\gamma\in \Gamma}$ is said to $\mathfrak{u}_AO_i$-converge to $x$ in $G$ if $|x_\gamma-x|\wedge a\converges{\text{$O_i$}}x$ for every $a\in A_+$.

%Proposition \ref{p3} was basically proven in \cite[Proposition 7.2]{Papa1964}.

The proof of the following lemma is basically contained in the proof of \cite[Proposition 7.2]{Papa1964}.
It is also used in the proof of Theorem \ref{cf}.

\begin{lem}\label{l5} Let $(G,+)$ be a commutative $\ell$-group. For every $s,x,y\in G$ and $a\in G_+$
\[|f_{s,s+a}(x)-f_{s,s+a}(y)|\,\le\, |x-y|\wedge a\,=\, |f_{y-a,y}(x)-f_{y-a,y}(y)|+|f_{y,y+a}(x)-f_{y,y+a}(y)|.\]
\end{lem}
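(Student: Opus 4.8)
The plan is to treat the two assertions separately: first the inequality $|f_{s,s+a}(x)-f_{s,s+a}(y)|\le |x-y|\wedge a$, and then the displayed equality. Throughout I would work with the standard identities valid in any commutative $\ell$-group: translation invariance of $\vee$ and $\wedge$, distributivity of the lattice, $-(u\vee v)=(-u)\wedge(-v)$, the decomposition $w=w^+-w^-$ with $|w|=w^++w^-$ and $w^+\wedge w^-=0$, and the fact that $p+q=p\vee q$ whenever $p\wedge q=0$. The conceptual picture is that $f_{s,s+a}(\,\cdot\,)=(\,\cdot\wedge(s+a))\vee s$ is the truncation of its argument to the ``interval of width $a$'', $[s,s+a]$.

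For the inequality I would establish two facts. First, the truncation is a contraction: since $u\mapsto u\wedge t$ and $u\mapsto u\vee s$ each satisfy $|(u\wedge t)-(v\wedge t)|\le|u-v|$ and $|(u\vee s)-(v\vee s)|\le|u-v|$ (Birkhoff's inequalities), composing gives $|f_{s,s+a}(x)-f_{s,s+a}(y)|\le|x-y|$. Second, both $f_{s,s+a}(x)$ and $f_{s,s+a}(y)$ lie in $[s,s+a]$, so writing $u,v$ for these two values we have $-a\le u-v\le a$, whence $|f_{s,s+a}(x)-f_{s,s+a}(y)|\le a$. Taking the meet of the two bounds yields the inequality.

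For the equality I would first translate by $-y$, writing $w:=x-y$. A direct computation using translation invariance gives $f_{y-a,y}(y)=f_{y,y+a}(y)=y$, so the two summands become $|f_{y-a,y}(x)-y|$ and $|f_{y,y+a}(x)-y|$. Using translation invariance again, $f_{y-a,y}(x)-y=(w\wedge 0)\vee(-a)=-(w^-\wedge a)$ (via $-(u\vee v)=(-u)\wedge(-v)$), and $f_{y,y+a}(x)-y=(w\wedge a)\vee 0=w^+\wedge a$ (by distributivity, since $a\ge 0$). As $w^-\wedge a$ and $w^+\wedge a$ are nonnegative, the right-hand side equals $(w^-\wedge a)+(w^+\wedge a)$. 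Finally, since $w^+\wedge w^-=0$ the elements $w^+\wedge a$ and $w^-\wedge a$ are disjoint, so their sum equals their join, which by distributivity is $(w^+\vee w^-)\wedge a=|w|\wedge a=|x-y|\wedge a$.

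The routine parts are the algebraic identities; the one step deserving care is the last, where the absence of division by $2$ in a general $\ell$-group forces me to replace any ``averaging'' argument by the disjointness identity $p+q=p\vee q$ together with distributivity. That is the only place where the genuine $\ell$-group structure (as opposed to a totally ordered group) is used, and it is where I would expect the main subtlety to lie.
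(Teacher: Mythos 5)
Your proposal is correct and follows essentially the same route as the paper's proof: the same two bounds (by $a$ via membership in $[s,s+a]$, and by $|x-y|$ via Birkhoff's contraction inequality) for the left-hand inequality, and the same computation of the two summands as $w^+\wedge a$ and $w^-\wedge a$ followed by the disjointness identity and distributivity for the equality. No gaps.
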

\begin{proof}
To prove the inequality on the left-hand side observe that $|f_{s,s+a}(x)-f_{s,s+a}(y)|\le a$ follows since $f_{s,s+a}(x), f_{s,s+a}(y)\in [s,s+a]$, and the inequality $|f_{s,s+a}(x)-f_{s,s+a}(y)|\le |x-y|$ follows by \cite[p.~296]{Birkhoff1967ThirdEd}.

 Let us prove the equality on the right-hand side.  First observe that
 \begin{align*}
 f_{y,y+a}(x)-f_{y,y+a}(y)=&(x \wedge (y+a)) \vee y\, -\, (y \wedge (y+a)) \vee y,\\
 =&(x \wedge (y+a)) \vee y \,-\, y\\
 =&((x-y)\wedge a)\vee 0 \\
 =&(x-y)^{+} \wedge a,
 \end{align*}
and
\begin{align*}
f_{y-a,y}(x)-f_{y-a,y}(y)=&(x\wedge y)\vee(y-a)\,-\,y\\
=&((x-y)\wedge 0)\vee(-a)\\
=&(-(x-y)^-)\vee(-a)\\
=&-((x-y)^-\wedge a).
\end{align*}
Therefore, since $(x-y)^{+}\wedge a$ and $(x-y)^{-}\wedge a$ are disjoint, we obtain
\begin{align*}
|f_{y,y+a}(x)-f_{y,y+a}(y)|+|f_{y-a,y}(x)-f_{y-a,y}(y)|=&((x-y)^{+} \wedge a)\,+\,((x-y)^-\wedge a)\\
=&((x-y)^{+} \wedge a)\,\vee\,((x-y)^-\wedge a)\\
=&|x-y|\wedge a.
\end{align*}
  \end{proof}

Let us recall that in an $\ell$-group $O_i$-convergence is: {\rm(i)}~additive, i.e. if $x_\gamma\converges{\text{$O_i$}}x$ and $y_\gamma\converges{\text{$O_i$}}y$, then $x_\gamma+y_\gamma\converges{\text{$O_i$}}x+y$; and {\rm(ii)}~`commutes with taking inverses', i.e.  $x_\gamma\converges{\text{$O_i$}}x$ iff $-x_\gamma\converges{\text{$O_i$}}-x$. Making use of the decomposition $x=x^+-x^-$, one can easily deduce that $x_\gamma\converges{\text{$O_i$}}0$ iff $|x_\gamma|\converges{\text{$O_i$}}0$.  Hence, the following proposition immediately follows by Lemma \ref{l5}.

\begin{prop}\label{p3} Let $(G,+,\tau)$ be a commutative $\ell$-group and $A$ a solid subgroup of $G$.   Let
\[F:=\{f_{s,t}\,:\,s,t\in G,\, t-s\in A_+\}.\]
 Then $x_\gamma\converges{\text{$\mathfrak{u}_AO_i$}} x$ iff $x_\gamma\converges{\text{$FO_i$}}x$, for every net $(x_\gamma)_{\gamma\in\Gamma}$ and $x$ in $G$.
\end{prop}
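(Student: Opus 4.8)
The plan is to read off both implications directly from Lemma~\ref{l5}, using the three features of $O_i$-convergence recalled just above it --- additivity, compatibility with negation, and the equivalence $x_\gamma\converges{\text{$O_i$}}0\Leftrightarrow|x_\gamma|\converges{\text{$O_i$}}0$ --- supplemented by the evident domination property: if $0\le w_\gamma\le v_\gamma$ and $v_\gamma\converges{\text{$O_i$}}0$, then $w_\gamma\converges{\text{$O_i$}}0$. The latter is immediate from Definition~\ref{defi}, by squeezing between the constant net $0$ and the dominating net (it holds for both $i=1$ and $i=2$).

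For the forward implication, assume $x_\gamma\converges{\text{$\mathfrak{u}_AO_i$}}x$ and fix $f_{s,t}\in F$, so that $a:=t-s\in A_+$ and $t=s+a$. The left-hand inequality of Lemma~\ref{l5}, applied with $y=x$, gives $|f_{s,s+a}(x_\gamma)-f_{s,s+a}(x)|\le|x_\gamma-x|\wedge a$ for every $\gamma$. Since $|x_\gamma-x|\wedge a\converges{\text{$O_i$}}0$ by hypothesis, domination yields $|f_{s,t}(x_\gamma)-f_{s,t}(x)|\converges{\text{$O_i$}}0$, and then the absolute-value equivalence together with additivity gives $f_{s,t}(x_\gamma)\converges{\text{$O_i$}}f_{s,t}(x)$. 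As $f_{s,t}\in F$ was arbitrary, $x_\gamma\converges{\text{$FO_i$}}x$.

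For the converse, assume $x_\gamma\converges{\text{$FO_i$}}x$ and fix $a\in A_+$. Both $f_{x-a,x}$ and $f_{x,x+a}$ lie in $F$, since in each case the difference of the two endpoints equals $a\in A_+$. By hypothesis $f_{x-a,x}(x_\gamma)\converges{\text{$O_i$}}f_{x-a,x}(x)$ and $f_{x,x+a}(x_\gamma)\converges{\text{$O_i$}}f_{x,x+a}(x)$, so passing to absolute values of the differences (via additivity and the absolute-value equivalence) gives $|f_{x-a,x}(x_\gamma)-f_{x-a,x}(x)|\converges{\text{$O_i$}}0$ and $|f_{x,x+a}(x_\gamma)-f_{x,x+a}(x)|\converges{\text{$O_i$}}0$; their sum then $O_i$-converges to $0$ by additivity. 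By the right-hand equality of Lemma~\ref{l5} (again with $y=x$) this sum is exactly $|x_\gamma-x|\wedge a$, whence $|x_\gamma-x|\wedge a\converges{\text{$O_i$}}0$. As $a\in A_+$ was arbitrary, $x_\gamma\converges{\text{$\mathfrak{u}_AO_i$}}x$.

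Once Lemma~\ref{l5} is available the argument is pure bookkeeping, and the authors are right to call it immediate. The only points needing a moment's care are checking that the specific maps $f_{x-a,x}$ and $f_{x,x+a}$ invoked in the converse genuinely belong to $F$ (they do, since both endpoint differences equal $a\in A_+$), and confirming that the domination property used in the forward direction is legitimate for both $i=1$ and $i=2$ (it is, being built into the squeeze in Definition~\ref{defi}). I do not expect any genuine obstacle beyond these.
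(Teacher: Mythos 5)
Your proof is correct and follows exactly the route the paper intends: the authors simply state that the proposition ``immediately follows by Lemma \ref{l5}'' after recalling additivity, compatibility with negation, and the equivalence $x_\gamma\converges{\text{$O_i$}}0\Leftrightarrow|x_\gamma|\converges{\text{$O_i$}}0$, and your write-up supplies precisely that bookkeeping (left inequality of the lemma plus domination for one direction, the right-hand equality with $f_{x-a,x},f_{x,x+a}\in F$ for the other). No discrepancy with the paper's argument.
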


Proposition \ref{p3} shows that, for $A=G$,  $\mathfrak{u}_GO_i$-convergence coincides with $\mathfrak{u}O_i$-convergence.
 For $A=G$ and $i=2$, Proposition \ref{p3} coincides with \cite[Proposition 7.2]{Papa1964}.

\begin{cor}
  Let $(G,+)$ be a commutative $\ell$-group .  Let $(x_\gamma)_{\gamma\in\Gamma}$ be a net in $G$, $x\in G$ and $x_\gamma\converges{\text{$\mathfrak{u}O_2$}}x$. Then $(x_\gamma)_{\gamma\in\Gamma}$ has a subnet that  $\mathfrak{u}O_1$-converges to $x$.
\end{cor}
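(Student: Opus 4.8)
The plan is to recognize this corollary as nothing more than a specialization of Theorem \ref{t3} to a particular family $F$. Since $G$ is a commutative $\ell$-group it is in particular a lattice, so the $\mathfrak{u}O_i$-notation introduced just before Corollary \ref{c5} applies verbatim: by definition, for the distinguished family
\[
F=\{f_{s,t}\,:\,s,t\in G,\ s\le t\},\qquad f_{s,t}(z)=(z\wedge t)\vee s,
\]
the relation $x_\gamma\converges{\text{$\mathfrak{u}O_i$}}x$ means precisely $x_\gamma\converges{\text{$FO_i$}}x$. Hence the hypothesis and conclusion of the corollary are, word for word, the hypothesis and conclusion of Theorem \ref{t3} for this specific $F$.

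So the first thing I would do is fix this $F$ and note that the assumption $x_\gamma\converges{\text{$\mathfrak{u}O_2$}}x$ is, by definition, the assumption that $(x_\gamma)_{\gamma\in\Gamma}$ is $FO_2$-convergent to $x$. Theorem \ref{t3} then directly supplies a subnet of $(x_\gamma)_{\gamma\in\Gamma}$ that is $FO_1$-convergent to $x$. Unwinding the definition of $\mathfrak{u}O_1$ one last time, $FO_1$-convergence of this subnet to $x$ is exactly $\mathfrak{u}O_1$-convergence to $x$, which is the desired statement; the proof is thereby complete in essentially one line of invocation.

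There is no genuine obstacle here, and I do not expect to need Proposition \ref{p3} or the group structure of $G$ at all for this deduction: the only role of the group operation was in the \emph{earlier} identification (via Proposition \ref{p3} with $A=G$) of $\mathfrak{u}O_i$-convergence with $\mathfrak{u}_GO_i$-convergence. For the present statement, all that is used is that $G$ carries a lattice order, so that $F$ and the $\mathfrak{u}O_i$-notation are meaningful on it, together with the general poset result of Theorem \ref{t3}. The one point worth stating explicitly, rather than any real difficulty, is simply that an $\ell$-group is a lattice and that $\mathfrak{u}O_i$-convergence is by construction an instance of $FO_i$-convergence.
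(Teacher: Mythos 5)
Your proof is correct and is essentially the paper's own argument: the paper likewise disposes of this corollary in one line, by invoking Theorem \ref{t3}. The only discrepancy is that the paper's proof also cites Proposition \ref{p3}; you are right that, under the paper's literal definition of $\mathfrak{u}O_i$-convergence as $FO_i$-convergence for $F=\{f_{s,t}:s,t\in G,\ s\le t\}$, Theorem \ref{t3} alone suffices, and Proposition \ref{p3} is needed only if one wants to read the hypothesis and conclusion in the equivalent form $|x_\gamma-x|\wedge a\converges{\text{$O_i$}}0$ for all $a\in G_+$ (i.e. as $\mathfrak{u}_GO_i$-convergence), which is how the corollary is meant to connect with unbounded order convergence in the literature.
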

\begin{proof}
This follows at once by Proposition \ref{p3} and Theorem \ref{t3}.
\end{proof}

\begin{cor}\label{c4} Let $(G,+)$ be a commutative $\ell$-group  and $A$ a solid subgroup of $G$.  Let $(Y,\tau)$ be a topological space and $\varphi$ a function from $G$ into $Y$.  Then the following conditions for $\varphi$ are equivalent:
\begin{itemize}
  \item $x_\gamma\converges{\text{$\mathfrak{u}_AO_1$}} x\ \Rightarrow\ \varphi(x_\gamma)\converges{\text{$\tau$}} \varphi(x)$,
  \item $x_\gamma\converges{\text{$\mathfrak{u}_AO_2$}} x\ \Rightarrow\ \varphi(x_\gamma)\converges{\text{$\tau$}}\varphi(x)$.
\end{itemize}
\end{cor}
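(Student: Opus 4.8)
The plan is to deduce this corollary directly from Corollary \ref{c3}{\rm(iv)} by means of the translation provided by Proposition \ref{p3}. First I would fix, once and for all, the set of functions $F:=\{f_{s,t}:s,t\in G,\ t-s\in A_+\}\subseteq G^G$ appearing in Proposition \ref{p3}. The content of that proposition is precisely that, for this particular $F$ and for each $i\in\{1,2\}$, a net $(x_\gamma)_{\gamma\in\Gamma}$ $\mathfrak{u}_AO_i$-converges to $x$ if and only if it $FO_i$-converges to $x$. Thus $\mathfrak{u}_AO_1$-convergence is nothing but $FO_1$-convergence, and $\mathfrak{u}_AO_2$-convergence is nothing but $FO_2$-convergence, with the \emph{same} $F$ governing both.

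With this identification in hand, the two conditions in the statement are, verbatim, conditions {\rm(c)} and {\rm(b)} of Corollary \ref{c3}{\rm(iv)}, applied with the poset $P=G$, the chosen $F$, and the given map $\varphi:G\to Y$: the first bullet asserts that $FO_1$-convergence of $(x_\gamma)_{\gamma\in\Gamma}$ to $x$ forces $\tau$-convergence of $(\varphi(x_\gamma))_{\gamma\in\Gamma}$ to $\varphi(x)$, which is {\rm(c)}, while the second bullet is the analogous assertion for $FO_2$-convergence, which is {\rm(b)}. Since Corollary \ref{c3}{\rm(iv)} establishes the equivalence of {\rm(a)}, {\rm(b)} and {\rm(c)} --- in particular of {\rm(b)} and {\rm(c)} --- the two conditions are equivalent, and the corollary follows.

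There is no genuine obstacle here; the argument is purely a matter of recognising the right dictionary. The only point requiring a moment's care is to check that Proposition \ref{p3} is invoked with one and the same $F$ for both values of $i$, so that the two bullets really match {\rm(b)} and {\rm(c)} for a single instance of Corollary \ref{c3}{\rm(iv)}; this is immediate, since $F$ is defined there independently of $i$. One could equivalently phrase the conclusion by adding that both conditions are also equivalent to condition {\rm(a)}, namely the $\tau_{FO}(G)$-$\tau$-continuity of $\varphi$, where the relevant order topology $\tau_{FO}(G)$ is the one built from this $F$.
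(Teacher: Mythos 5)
Your argument is correct and is exactly the paper's own proof, merely spelled out in more detail: the paper also deduces the corollary by combining Proposition \ref{p3} (with $F=\{f_{s,t}:s,t\in G,\ t-s\in A_+\}$) with the equivalence of {\rm(b)} and {\rm(c)} in Corollary \ref{c3}{\rm(iv)}. Nothing further is needed.
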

\begin{proof}
  This follows by Proposition \ref{p3} and Corollary \ref{c3}
\end{proof}

In view of Proposition \ref{p3}, Corollary \ref{c3}(i) answers \cite[Question 18.51]{TaylorThesis}.

\section{Lattice uniformities -- setting up the basics}
Let $L$ be a lattice and $\Delta:=\{(x,x):x\in L\}$ the diagonal of $L\times L$.
A subset $C$ of $L$ is called \textit{convex} if $[a, b]\subseteq C$ whenever $a, b\in C$ and $a\leq b.$
A \textit{lattice uniformity} is a uniformity on a lattice making the lattice operations $\vee$
and $\wedge$ uniformly continuous. A \textit{uniform lattice} is a lattice endowed with a lattice uniformity.
One easily verifies:

\begin{prop} [\cite{W91}, Proposition 1.1.2] \label{dia}
Let  $u$ be a uniformity on the lattice   $L$.  Then the following statements are equivalent:
\begin{enumerate}[{\rm(i)}]
\item $u$  is a lattice uniformity;
\item for every $U\in u$ there exists  $V\in u$ satisfying  $V\vee V\subseteq U$ and $V\wedge V\subseteq U$;
\item for every $U\in u$ there exists  $V\in u$ satisfying $V\vee\Delta\subseteq U$ and $V\wedge\Delta\subseteq U$.
\end{enumerate}
\end{prop}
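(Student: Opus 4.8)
The plan is to treat this as a short cycle in which (i)$\Leftrightarrow$(ii) is essentially unwinding definitions, (ii)$\Rightarrow$(iii) is immediate, and the real work lies in (iii)$\Rightarrow$(ii). Throughout I would use the standard convention that for $V,W\subseteq L\times L$ one sets $V\vee W:=\{(a_1\vee b_1,\,a_2\vee b_2):(a_1,a_2)\in V,\ (b_1,b_2)\in W\}$ and dually for $\wedge$; in particular $V\vee\Delta=\{(a_1\vee c,\,a_2\vee c):(a_1,a_2)\in V,\ c\in L\}$. I would also record the two standing facts to be used repeatedly: every entourage contains the diagonal, so $\Delta\subseteq V$ for all $V\in u$, and these set operations are monotone in each argument.

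For (i)$\Leftrightarrow$(ii): the operation $\vee$, regarded as a map $L\times L\to L$, is uniformly continuous precisely when for each $U\in u$ there is a basic entourage of the product uniformity, determined by some $V_1,V_2\in u$ via the requirements $(x_1,x_2)\in V_1$ and $(y_1,y_2)\in V_2$, that forces $(x_1\vee y_1,\,x_2\vee y_2)\in U$. Replacing $V_1,V_2$ by $V:=V_1\cap V_2$ turns this into exactly the condition $V\vee V\subseteq U$, and conversely such a $V$ supplies a product entourage witnessing uniform continuity. The same remark for $\wedge$ gives the equivalence of (i) and (ii) verbatim. For (ii)$\Rightarrow$(iii): given $U$, take $V$ as in (ii); since $\Delta\subseteq V$ and the operations are monotone, $V\vee\Delta\subseteq V\vee V\subseteq U$ and $V\wedge\Delta\subseteq V\wedge V\subseteq U$, so the same $V$ works.

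The substance is (iii)$\Rightarrow$(ii), and the key device is a two-step factorization through an intermediate element. Given $U\in u$, I would first invoke the triangle axiom of the uniformity to choose $U'\in u$ with $U'\circ U'\subseteq U$, and then apply (iii) to $U'$ to obtain a single $V\in u$ with $V\vee\Delta\subseteq U'$ and $V\wedge\Delta\subseteq U'$. For any $(x_1,x_2)\in V$ and $(y_1,y_2)\in V$, joining the first pair with the fixed element $y_1$ gives $(x_1\vee y_1,\,x_2\vee y_1)\in V\vee\Delta\subseteq U'$, while joining the second pair with the fixed element $x_2$ gives $(x_2\vee y_1,\,x_2\vee y_2)\in V\vee\Delta\subseteq U'$; composing these two memberships through the intermediate point $x_2\vee y_1$ yields $(x_1\vee y_1,\,x_2\vee y_2)\in U'\circ U'\subseteq U$, that is, $V\vee V\subseteq U$. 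The verbatim computation with $\wedge$ in place of $\vee$, using $V\wedge\Delta\subseteq U'$ and the intermediate point $x_2\wedge y_1$, gives $V\wedge V\subseteq U$, which establishes (ii).

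The only genuine obstacle is turning the one-sided control supplied by (iii) into the two-sided control demanded by (ii), and this is exactly what the composition axiom resolves once the factorization $x_1\vee y_1,\ x_2\vee y_1,\ x_2\vee y_2$ is set up. I would be slightly careful to note that commutativity of $\vee$ is what lets the second step be read as an instance of $V\vee\Delta$ (with $(y_1,y_2)\in V$ and $c=x_2$), and that no symmetry of entourages is needed, because the intermediate point aligns the two memberships in the correct order for the composition $U'\circ U'$.
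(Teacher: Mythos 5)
Your proof is correct. The paper gives no proof of this proposition (it is quoted from \cite{W91} with the remark ``One easily verifies''), and your argument --- the definitional unwinding of uniform continuity for (i)$\Leftrightarrow$(ii), monotonicity together with $\Delta\subseteq V$ for (ii)$\Rightarrow$(iii), and the two-step factorization through the intermediate point $x_2\vee y_1$ combined with $U'\circ U'\subseteq U$ for (iii)$\Rightarrow$(ii) --- is exactly the standard verification intended there.
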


It follows that the supremum of lattice uniformities on $L$ (built in the complete lattice of all uniformities on $L$) is also a lattice uniformity.  We remark that -- although it is more delicate to see -- the analogous statement holds true also for the infimum of lattice uniformities, (see \cite[Remark 2.6]{MR2377916}).

The topology induced by a lattice uniformity $u$ on $L$, the $u$-topology, is a \textit{locally convex
lattice topology}, i.e. $\vee$ and $\wedge$ are continuous and every point of $L$ has a neighbourhood base consisting of convex sets.

As mentioned in the introduction, uniform lattices generalize topological Boolean rings on one-hand and locally solid $\ell$-groups (and therefore locally solid Riesz spaces) on the other.  More specifically, we recall that (see \cite[Proposition 1.1.7 \& 1.1.8]{W91}) the uniformity induced by
\begin{itemize}
\item an FN-topology (i.e. a locally convex group topology) on a Boolean ring, or
\item a locally solid group topology\footnote{A subset $A$ of an $\ell$-group $G$ is solid if for every for every $a\in A$ and $x\in G$ satisfying $|x|\le |a|$, it holds that $x\in A$.  A group topology on an $\ell$-group $G$ is locally solid if $0$ has a neighbourhood base consisting of solid sets. } on an $\ell$-group,
\end{itemize}
is a lattice uniformity.

For the lattice uniformity $u$ we let $N(u):=\cap_{U\in u}U$.  We note that $u$ is Hausdorff iff $N(u)=\Delta$. We recall  \cite[Proposition 1.1.3]{W91} that if $u$ is a lattice uniformity on $L$,  every $U\in u$ contains a $V\in u$ such that the rectangle $[a\wedge b,a\vee b]^2\subseteq V$ for all pairs $(a,b)\in V$.

\begin{prop}\label{p2}
 Let $u$ and $v$ be two lattice uniformities on $L$  that agree on every order bounded subset of  $L$.    Then $N(u)=N(v)$. In particular, $u$ is Hausdorff iff $v$ is Hausdorff.
 \end{prop}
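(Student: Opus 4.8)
The plan is to prove the double inclusion $N(u)\subseteq N(v)$ and $N(v)\subseteq N(u)$; since the hypothesis is symmetric in $u$ and $v$, it suffices to establish one of them. The one observation that does all the work is that every pair $(x,y)\in L\times L$ automatically lies inside the order bounded interval $A:=[x\wedge y,\,x\vee y]$, because $x,y\in A$ by the very definition of meet and join. Thus any \emph{single} pair can be trapped inside an order bounded set on which, by assumption, $u$ and $v$ coincide. This is the conceptual heart of the argument, and everything else is bookkeeping.

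Next I would record the elementary identity that the operation $N(\cdot)$ commutes with restriction to an order bounded set. Writing $u|_A$ for the trace uniformity $\{U\cap(A\times A):U\in u\}$ on $A$, one has $N(u|_A)=N(u)\cap(A\times A)$: the intersection over the whole trace filter equals the intersection over the base $\{U\cap(A\times A):U\in u\}$, and the latter is $\bigl(\bigcap_{U\in u}U\bigr)\cap(A\times A)=N(u)\cap(A\times A)$. The same identity holds verbatim for $v$, giving $N(v|_A)=N(v)\cap(A\times A)$.

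The argument then closes quickly. Fix $(x,y)\in N(u)$ and set $A:=[x\wedge y,\,x\vee y]$, an order bounded subset of $L$ containing both $x$ and $y$. Since $(x,y)\in A\times A$ and $(x,y)\in N(u)$, the commuting identity yields $(x,y)\in N(u|_A)$. The hypothesis that $u$ and $v$ agree on the order bounded set $A$ means precisely that $u|_A=v|_A$, whence $N(u|_A)=N(v|_A)=N(v)\cap(A\times A)\subseteq N(v)$; therefore $(x,y)\in N(v)$. This proves $N(u)\subseteq N(v)$, and interchanging the roles of $u$ and $v$ gives the reverse inclusion, hence $N(u)=N(v)$. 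The ``in particular'' clause is then immediate from the recalled criterion that a lattice uniformity $w$ is Hausdorff iff $N(w)=\Delta$: as $N(u)=N(v)$, one equals $\Delta$ exactly when the other does.

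Regarding the main obstacle: the argument is genuinely short, and its only real content is the first observation that any pair sits inside an order bounded interval. The likely pitfalls are interpretive and formal rather than deep — namely pinning down the intended meaning of ``$u$ and $v$ agree on every order bounded subset'' (I read it as equality of trace uniformities $u|_A=v|_A$ for each order bounded $A$) and verifying the restriction identity $N(u|_A)=N(u)\cap(A\times A)$ with care. I would also remark that the rectangle property recalled just before the proposition is \emph{not} needed here; it merely provides an alternative route, showing that $[x\wedge y,\,x\vee y]^2\subseteq N(u)$ whenever $(x,y)\in N(u)$, which would also localize the pair inside an order bounded set.
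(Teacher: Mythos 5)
Your proof is correct. It shares the paper's central idea --- trap the pair $(x,y)$ inside the order bounded interval $A:=[x\wedge y,\,x\vee y]$ and exploit the agreement of the trace uniformities there --- but it executes the transfer step differently. The paper first uses the rectangle property recalled just before the proposition to inflate $(a,b)\in N(u)$ to the full rectangle $[a\wedge b,a\vee b]^2\subseteq N(u)$, and only then passes to $v$ to conclude $[a\wedge b,a\vee b]^2\subseteq N(v)$. You instead transfer the single pair directly via the identity $N(w|_A)=N(w)\cap(A\times A)$, which you verify from the definition of the trace uniformity, so the rectangle property is never invoked; your closing remark correctly identifies it as the paper's alternative route. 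Both arguments are valid and of comparable length; yours is marginally more self-contained, relying only on the definition of the induced uniformity, while the paper's leans on a previously recalled structural fact about lattice uniformities and proves the slightly stronger intermediate statement that the whole rectangle lies in $N(v)$.
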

 \begin{proof}
 If $(a,b)\in N(u)$, then the rectangle $[a\wedge b,a\vee b]^2\subseteq N(u)$ and so, since the uniformity $\restr{u}{[a\wedge b,a\vee b]}$ induced by $u$ on $[a\wedge b,a\vee b]$ agrees with $\restr{v}{[a\wedge b,a\vee b]}$, we get $[a\wedge b,a\vee b]^2\subseteq N(v)$, i.e. $(a,b)\in N(v)$.  This shows $N(u)\subseteq N(v)$ and therefore $N(u)=N(v)$  by symmetry.
 \end{proof}

 A lattice uniformity $u$ on $L$ is said to be:
\begin{itemize}
  \item  \emph{order continuous (o.c.)} if $(x_\gamma)_{\gamma\in\Gamma}$ converges to $x$ in $(L,u)$ whenever $(x_\gamma)_{\gamma\in\Gamma}$ is a monotone net in $L$ order converging to $x$;
  \item  \emph{exhaustive} if every monotone net in $L$ is Cauchy;
   \item  \emph{locally exhaustive} if $\restr{u}{[a,b]}$ is exhaustive for every $a\le b$ in $L$.
 \end{itemize}

It is easy to see that a lattice uniformity $u$ is o.c. iff  the $u$-topology  is coarser than the order topology $\tau_O(L)$: Let $u$ be o.c. and $C$ be closed in $(L,u)$. If $(x_\gamma)_{\gamma\in\Gamma}$ is a net in $C$, $x\in L$ and $(y_\gamma)_{\gamma\in\Gamma}$, $(z_\gamma)_{\gamma\in\Gamma}$ are nets in $L$ such that $y_\gamma\uparrow x$, $z_\gamma\downarrow x$ and $y_\gamma\leq x_\gamma\leq z_\gamma$ for all $\gamma\in\Gamma$, then both, $(y_\gamma)$ and $(z_\gamma)$, converge to $x$ w.r.t. $u$, therefore $(x_\gamma)$ converges to $x$ w.r.t. $u$ since the $u$-topology is locally convex, hence $x\in C$, i.e. $C$ is order-closed. The other implication, that $u$ is o.c. if the $u$-topology is coarser than $\tau_O(L)$, is obvious.

By \cite[Proposition 6.1]{W93}, $u$ is exhaustive iff every monotone sequence in $L$ is Cauchy.

\begin{theo}[\cite{W93}, Proposition 6.3] \label{t1} %(\cite[Theorem 6.3]{W93})
Let $(L,u)$ be a Hausdorff uniform lattice. Then the following conditions are equivalent:
\begin{enumerate}[{\rm(i)}]
\item  $(L,u)$ is complete (as a uniform space) and  $u$ is exhaustive;
\item $(L,\le)$ is a complete lattice and $u$ is o.c..
\end{enumerate}
\end{theo}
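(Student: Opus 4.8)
The plan is to prove the two implications separately, doing the easy parts in full and isolating the genuine difficulty in the completeness statement.

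\emph{(i)$\Rightarrow$(ii).} Assume $(L,u)$ is complete and $u$ is exhaustive. For order continuity I would take an increasing net $x_\gamma\uparrow x$; by exhaustiveness it is Cauchy, hence $u$-converges to some $x'\in L$, and the point is to identify $x'$ with $\sup_\gamma x_\gamma=x$. The key observation is that for each $a\in L$ the set $\{t:t\ge a\}=(\,\cdot\wedge a)^{-1}(\{a\})$ is $u$-closed, being the preimage of the singleton $\{a\}$ (closed since $u$ is Hausdorff) under the continuous map $t\mapsto t\wedge a$; dually $\{t:t\le a\}$ is closed. Applying this to the tails of $(x_\gamma)$ shows $x'$ is an upper bound, and applying the dual sets to any competing upper bound shows $x'$ is the least one, so $x'=x$. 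For completeness of the lattice, given $\emptyset\ne A\subseteq L$ I would run the net of finite joins $M_E=\bigvee_{a\in E}a$ over finite $E\subseteq A$ ordered by inclusion: it is increasing, hence Cauchy by exhaustiveness, hence $u$-convergent by completeness, and by the previous paragraph its limit is $\sup_E M_E=\sup A$. The dual argument (and the choice $A=L$) produces all infima and suprema, so $(L,\le)$ is a complete lattice.

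\emph{(ii)$\Rightarrow$(i).} Exhaustiveness is immediate: a monotone net now has a supremum (resp.\ infimum) in the complete lattice $L$, so it order-converges to it and therefore $u$-converges by o.c., and a convergent net is Cauchy. The substance is uniform completeness. Given a Cauchy net $(x_\gamma)_{\gamma\in\Gamma}$ I would first localize it to an order interval: the tail suprema $z_\gamma:=\sup_{\gamma'\ge\gamma}x_{\gamma'}$ and infima $y_\gamma:=\inf_{\gamma'\ge\gamma}x_{\gamma'}$ exist by lattice-completeness and satisfy $z_\gamma\downarrow z$, $y_\gamma\uparrow y$ with $z_\gamma\to z$ and $y_\gamma\to y$ by o.c.; since the tail beyond any $\gamma_0$ lies in the fixed interval $[y_{\gamma_0},z_{\gamma_0}]$, one may assume the net lives in a single interval $I$, on which $\restr{u}{I}$ is again o.c.\ and (by local exhaustiveness) exhaustive.

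The main obstacle is then to produce the actual $u$-limit inside $I$ and prove convergence to it. It is tempting to guess the limit is the order liminf $y$ or limsup $z$, but this is false: already in a complete measure algebra a $u$-Cauchy ``typewriter'' sequence can have $y$ strictly below the limit and $z$ strictly above it, so the limit is \emph{not} recoverable from the order structure of the tails alone, and the uniformity must be used essentially. I expect this to be the crux. The route I would take is to combine the uniform continuity of $\wedge$ and $\vee$ (which, on fixing one argument, makes each map $t\mapsto t\wedge c$ and $t\mapsto t\vee c$ uniformly continuous uniformly in $c$) with order continuity, pinning down a candidate limit through a refining sequence of $V$-small tails and the rectangle property of \cite[Proposition 1.1.3]{W91} while exhausting $I$ monotonically; alternatively one passes to the Hausdorff completion $\widetilde L$, observes that it is again a uniform lattice into which $L$ embeds as a dense sublattice, and argues that the order-completeness of $L$ together with o.c.\ forces $L=\widetilde L$. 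Either way, reconciling the order-theoretic completeness of $L$ with the purely uniform nature of the limit is the delicate heart of the argument.
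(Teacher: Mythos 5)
The paper does not prove Theorem \ref{t1} at all: it is imported verbatim from \cite[Proposition 6.3]{W93}, so there is no internal argument to measure yours against. On its own terms, your proposal is correct and complete for (i)$\Rightarrow$(ii) and for the exhaustiveness half of (ii)$\Rightarrow$(i): the observation that $\{t:t\ge a\}=(\,\cdot\wedge a)^{-1}(\{a\})$ and its dual are closed (Hausdorffness plus continuity of the lattice operations) does identify the uniform limit of a monotone Cauchy net with its supremum/infimum, and running the net of finite joins then yields lattice completeness.

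The genuine gap is the uniform completeness in (ii)$\Rightarrow$(i), which you flag but do not close. Your localization of a Cauchy net to an order interval is harmless but produces no limit, and neither proposed route is carried out. Route (b) in particular only relocates the difficulty: after passing to the completion $(\tilde L,\tilde u)$, transferring exhaustiveness by Proposition \ref{exh}, and applying the already-proved implication to conclude that $\tilde L$ is a complete lattice with $\tilde u$ o.c., the assertion that ``order-completeness of $L$ together with o.c.\ forces $L=\tilde L$'' is exactly the claim that $L$ is closed in $\tilde L$ --- which is as hard as the original statement, since suprema formed in $L$ need not agree with suprema formed in $\tilde L$. Your typewriter example correctly rules out reading the limit off as the order $\liminf$ or $\limsup$ of the given net, but it also points to the missing idea: one must first \emph{refine} the net. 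In the sequential/metrizable case this can be made explicit: pick a generating semimetric $d\in D(L)$ as in Proposition \ref{semi} and a subsequence with $\sum_n d(x_n,x_{n+1})<\infty$; the contraction estimates in the spirit of Lemma \ref{l2}(i) give $d\bigl(\inf_{n\le m\le N}x_m,\,x_n\bigr)\le q\sum_{k=n}^{N-1}d(x_k,x_{k+1})$, order continuity lets $N\to\infty$ and then identifies the $u$-limit with $\sup_n\inf_{m\ge n}x_m$. For arbitrary nets and non-metrizable lattice uniformities an analogous refinement (or a Cauchy-filter argument) is required, and supplying it is precisely the content of \cite[Proposition 6.3]{W93} that your sketch leaves open.
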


\begin{prop}[\cite{W93}, Proposition 6.14] \label{exh}
Let $S$ be a dense sublattice of a uniform lattice $(L,u)$. If  $u|_S$  is (locally) exhaustive, then $u$ is (locally) exhaustive, too.
\end{prop}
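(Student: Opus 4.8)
The plan is to reduce the statement to monotone sequences and then transport such a sequence from $L$ into $S$ while keeping the approximation uniformly good. By \cite[Proposition 6.1]{W93} a lattice uniformity is exhaustive iff every monotone sequence is Cauchy, and since $[a,b]$ with $u|_{[a,b]}$ is again a uniform lattice, the same criterion applies on each order interval; thus the local statement reduces to showing that every monotone sequence contained in an interval $[a,b]\subseteq L$ is $u$-Cauchy. In either case it therefore suffices to fix a monotone sequence $(x_n)$ in $L$ (say increasing; the decreasing case is dual) and to prove it is Cauchy. Before constructing anything I would fix the bookkeeping: using Proposition \ref{dia}(iii) together with the uniformity axioms and the standard metrization lemma (in the monotone form available for lattice uniformities, \cite{W91}), choose a pseudometric $d$ inducing a uniformity coarser than $u$ but still finer than the given entourage $U$, and which is \emph{monotone}, i.e. $d(x\vee z,y\vee z)\le d(x,y)$ and $d(x\wedge z,y\wedge z)\le d(x,y)$ for all $x,y,z$. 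The whole point of this preliminary step is that perturbing a single argument of a finite supremum costs nothing in $d$, so that the errors below will \emph{add} rather than compound.

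The construction is then as follows. By density of $S$ choose $t_n\in S$ with $d(x_n,t_n)<2^{-n}\delta$ and set $s_n:=t_1\vee\cdots\vee t_n\in S$; this sequence is increasing because $S$ is a sublattice. Writing $x_n=x_1\vee\cdots\vee x_n$ (using that $(x_n)$ is increasing) and replacing $t_k$ by $x_k$ one coordinate at a time, monotonicity of $d$ gives $d(x_n,s_n)\le\sum_{k=1}^n d(x_k,t_k)<\delta$, and this bound is \emph{uniform in $n$}. Since $(s_n)$ is an increasing sequence in $S$ and $u|_S$ is exhaustive, $(s_n)$ is $u$-Cauchy, so there is $N$ with $d(s_m,s_n)<\delta$ for $m,n\ge N$; the triangle inequality yields $d(x_m,x_n)<3\delta$ for $m,n\ge N$, and choosing $\delta$ so that $\{d<3\delta\}\subseteq U$ completes the exhaustive case.

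I expect the genuine obstacle to be exactly the tension that this argument resolves. A monotone sequence forces infinitely many suprema, and the naive attempt to keep every approximant at one fixed entourage level is doomed, because each application of $\vee$ costs a level in the hierarchy of Proposition \ref{dia}. The way out is to allocate a \emph{summable} error budget $2^{-n}\delta$ to the $n$-th approximant, so that the total accumulated error remains below $\delta$ regardless of how far along the sequence one looks; the monotone pseudometric is precisely what turns ``one more supremum'' into an additive, controllable cost rather than a multiplicative one. This is the heart of the proof, and everything else is routine transfer by the triangle inequality.

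For the local statement there is a second, smaller difficulty: the endpoints $a,b$ of the interval need not lie in $S$, so the $s_n$ above need not sit inside any interval of $S$, and local exhaustiveness of $u|_S$ cannot be invoked directly. Here I would use the paper's own retraction $f_{s,t}(x)=(x\wedge t)\vee s$. Pick $\alpha,\beta\in S$ with $d(\alpha,a)$ and $d(\beta,b)$ small and replace $s_n$ by $f_{\alpha,\beta}(s_n)=(s_n\wedge\beta)\vee\alpha\in S$: this is again increasing in $n$ and lies in the $S$-interval $[\alpha\wedge\beta,\alpha\vee\beta]$, so local exhaustiveness of $u|_S$ makes it Cauchy. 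Since $x_n\in[a,b]$ gives $f_{a,b}(x_n)=x_n$, and $f_{\alpha,\beta}$, $f_{a,b}$ are built from $\vee$ and $\wedge$, monotonicity of $d$ bounds $d(f_{\alpha,\beta}(s_n),x_n)$ by $d(s_n,x_n)+d(\alpha,a)+d(\beta,b)$ uniformly in $n$, and the same triangle-inequality transfer as above shows that $(x_n)$ is $u$-Cauchy. This settles the locally exhaustive case and finishes Proposition \ref{exh}.
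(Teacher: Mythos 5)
The paper does not actually prove Proposition \ref{exh}: it is imported verbatim from \cite[Proposition 6.14]{W93}, so there is no in-paper argument to compare against. Your proof is correct and is essentially the standard density-transfer argument (very likely close to Weber's original): reduce to monotone sequences via \cite[Proposition 6.1]{W93}, approximate $x_n$ by $t_n\in S$ with a summable error budget, pass to the monotone envelope $s_n=t_1\vee\dots\vee t_n$ in $S$, and transfer Cauchyness back by the triangle inequality; the truncation $f_{\alpha,\beta}(s_n)$ correctly handles the fact that the interval endpoints need not lie in $S$ in the local case. The one inaccuracy is your appeal to semimetrics satisfying $d(x\vee z,y\vee z)\le d(x,y)$: by Proposition \ref{semi} this exact ($q=1$) form is guaranteed only when $L$ is distributive, whereas Proposition \ref{exh} concerns arbitrary uniform lattices, so you must work with the $q>1$ version. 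This costs nothing: in your telescoping estimate each replacement of $t_k$ by $x_k$ invokes the inequality exactly once, giving $d(x_n,s_n)\le q\sum_k d(x_k,t_k)<q\delta$, and the truncation estimate in the local case picks up at most a factor $q^2$; all bounds remain uniform in $n$, so the argument goes through unchanged.
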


We recall that every Hausdorff uniform lattice $(L,u)$  is a sublattice and a dense subspace of a Hausdorff uniform lattice $(\tilde L,\tilde u)$, which is complete as a uniform space (see \cite[Proposition 1.3.1]{W91}. $(\tilde L,\tilde u)$ is called the completion of $(L,u)$.

From Theorem \ref{t1} and Proposition \ref{exh} immediately follows

\begin{cor}\label{com}
Let $(L,u)$ be a (locally) exhaustive Hausdorff uniform lattice and $(\tilde L,\tilde u)$ its completion. Then $(\tilde{L},\leq)$ is a (Dedekind) complete lattice and $\tilde{u}$ is o.c..
\end{cor}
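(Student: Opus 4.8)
The plan is to read the statement in its two parallel forms and prove each: the unparenthesised (global) case is an immediate splicing of Proposition~\ref{exh} with Theorem~\ref{t1}, whereas the parenthesised (local) case needs one extra localisation step to order intervals. I would begin with the global case. Here $(L,u)$ is exhaustive and $L$ is a dense sublattice of its completion $(\tilde L,\tilde u)$ with $\tilde u|_L=u$ exhaustive, so Proposition~\ref{exh} upgrades this to: $\tilde u$ is exhaustive. Since $(\tilde L,\tilde u)$ is a Hausdorff uniform lattice that is complete as a uniform space, condition~(i) of Theorem~\ref{t1} holds, and the implication (i)$\Rightarrow$(ii) gives at once that $(\tilde L,\le)$ is a complete lattice and $\tilde u$ is o.c.

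For the local case, Proposition~\ref{exh} first promotes local exhaustiveness of $u$ to local exhaustiveness of $\tilde u$. The pivotal observation is that every order interval $[a,b]\subseteq\tilde L$ is $\tilde u$-closed: the sets $\{x:x\ge a\}=\{x:x\wedge a=a\}$ and $\{x:x\le b\}=\{x:x\vee b=b\}$ are preimages of the singletons $\{a\},\{b\}$ (closed, as $\tilde u$ is Hausdorff) under the $\tilde u$-continuous maps $x\mapsto x\wedge a$ and $x\mapsto x\vee b$, so their intersection $[a,b]$ is closed. A closed subspace of a complete uniform space is complete, so $([a,b],\tilde u|_{[a,b]})$ is complete as a uniform space; it is moreover Hausdorff and, by local exhaustiveness, exhaustive. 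Applying Theorem~\ref{t1} to the sublattice $[a,b]$ therefore yields that $([a,b],\le)$ is a complete lattice and that $\tilde u|_{[a,b]}$ is o.c., for every $a\le b$ in $\tilde L$.

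It then remains to globalise these interval-wise statements. For Dedekind completeness, given a nonempty $S\subseteq\tilde L$ bounded above by $b$, I would fix $s_0\in S$, apply completeness of the lattice $[s_0,b]$ to the family $\{s\vee s_0:s\in S\}\subseteq[s_0,b]$, and check that the supremum $m$ computed in $[s_0,b]$ is in fact $\sup S$ in $\tilde L$: any upper bound $w$ of $S$ gives that $w\wedge b$ lies in $[s_0,b]$ and bounds the family, whence $m\le w\wedge b\le w$. The dual argument handles sets bounded below. For order continuity of $\tilde u$, a monotone net order converging to $x$ is eventually contained in some interval $[a,b]$ with $x\in[a,b]$; since suprema/infima computed in $[a,b]$ agree with those in $\tilde L$, the net order converges to $x$ inside $[a,b]$, and local o.c. on $[a,b]$ delivers $\tilde u$-convergence to $x$.

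The main obstacle is precisely this local case: everything rests on the compatibility checks that let the interval-local conclusions of Theorem~\ref{t1} be assembled into global statements, namely that order intervals are $\tilde u$-closed (so they inherit uniform completeness) and that suprema, infima and monotone order limits computed within an interval coincide with those in the ambient $\tilde L$. The global case, by contrast, drops straight out of the two cited results.
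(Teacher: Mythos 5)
Your proposal is correct and follows exactly the route the paper intends: the paper's entire proof is the one-line remark that the corollary ``immediately follows'' from Proposition~\ref{exh} and Theorem~\ref{t1}, which is precisely your global argument, and your local case merely spells out the details (order intervals of $\tilde L$ are closed, hence complete, Hausdorff and exhaustive, so Theorem~\ref{t1} applies interval-wise, and the interval-wise conclusions globalise) that the authors leave implicit. All of your compatibility checks are accurate, so nothing needs to be changed.
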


In Theorem \ref{W93C7.2.4} we need for a uniform lattice $(L,u)$ the following Condition (C) stronger than order continuity.

 \emph{Condition (C)}: For every increasing (and resp. decreasing) order bounded net $(x_\gamma)_{\gamma\in\Gamma}$ in $L$ and every $U\in u$, there exists an upper bound (resp. lower bound) $x\in L$ of $(x_\gamma)_{\gamma\in\Gamma}$, such that $(x_\gamma,x)\in U$ eventually.

\begin{prop}[\cite{W93}, Proposition 7.1.2] \label{7.1.2} %[W93, Proposition 7.1.2])
\begin{enumerate}[{\rm(i)}]
\item Any lattice uniformity satisfying Condition (C) is o.c. and locally exhaustive.
\item If $u$ is a lattice uniformity on a Dedekind complete lattice, then $u$ satisfies Condition (C) iff $u$ is  o.c..
\end{enumerate}
\end{prop}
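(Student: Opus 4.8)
The plan is to prove (i) directly from Condition (C) and then obtain (ii) as an easy consequence of (i). The single non-routine ingredient used throughout is the entourage-refinement recalled just before Proposition \ref{p2}: for every $U\in u$ there is a $V\in u$ with $V\subseteq U$ such that $(a,b)\in V$ forces $[a\wedge b,a\vee b]^2\subseteq V$ (\cite[Proposition 1.1.3]{W91}). I will use this ``convex rectangle'' refinement to convert the information that the net is $u$-close to \emph{some} upper bound supplied by Condition (C) into $u$-closeness to the exact limit (for order continuity) or to the tail of the net (for exhaustiveness). Throughout, it suffices to treat increasing nets, the decreasing case being dual.

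To show Condition (C) implies order continuity, I would take a monotone order-convergent net; reducing to an increasing one, $x_\gamma\uparrow x$ means $x=\sup_\gamma x_\gamma$. Fix $U\in u$ and choose $V\subseteq U$ as above. Condition (C) yields an upper bound $y$ of the net together with an index $\gamma_0$ such that $(x_\gamma,y)\in V$ for all $\gamma\ge\gamma_0$. For such $\gamma$, since $x_\gamma\le y$ we get $[x_\gamma,y]^2\subseteq V$; and because $x$ is the \emph{least} upper bound, $x_\gamma\le x\le y$, whence $(x_\gamma,x)\in[x_\gamma,y]^2\subseteq V\subseteq U$. Thus $(x_\gamma,x)\in U$ eventually, i.e. $x_\gamma\to x$ in $u$, so $u$ is o.c.

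For local exhaustiveness, fix $a\le b$ and an increasing net $(x_\gamma)$ in $[a,b]$; it is order bounded in $L$ (above by $b$), so given $U\in u$ and a refinement $V\subseteq U$ as above, Condition (C) again provides an upper bound $y$ and an index $\gamma_0$ with $(x_\gamma,y)\in V$ for $\gamma\ge\gamma_0$. In particular $(x_{\gamma_0},y)\in V$ with $x_{\gamma_0}\le y$ gives $[x_{\gamma_0},y]^2\subseteq V$, and every $x_\gamma$ with $\gamma\ge\gamma_0$ lies in $[x_{\gamma_0},y]$. Hence $(x_\gamma,x_{\gamma'})\in[x_{\gamma_0},y]^2\subseteq V\subseteq U$ for all $\gamma,\gamma'\ge\gamma_0$; since these pairs lie in $[a,b]^2$, they belong to an entourage of $\restr{u}{[a,b]}$, so the net is $\restr{u}{[a,b]}$-Cauchy. (Note that $y$ need not lie in $[a,b]$, but this is irrelevant for the estimate.) Handling decreasing nets dually shows $\restr{u}{[a,b]}$ is exhaustive, completing (i).

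Finally, part (ii): the implication ``Condition (C) $\Rightarrow$ o.c.'' is exactly (i) and needs no completeness assumption. For the converse, assume $u$ is o.c. and $L$ is Dedekind complete, and let $(x_\gamma)$ be increasing and order bounded. Dedekind completeness guarantees that $x:=\sup_\gamma x_\gamma$ exists, so $x_\gamma\uparrow x$; order continuity then gives $x_\gamma\to x$ in $u$, and since $x$ is an upper bound of the net, Condition (C) holds (decreasing nets dually). The only genuinely non-mechanical point, and thus the main obstacle, is the very first observation in each part of (i): that $u$-closeness to an \emph{arbitrary} upper bound already forces $u$-closeness to the least upper bound, resp. to the tail of the net. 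This is precisely what the convex rectangle refinement of \cite[Proposition 1.1.3]{W91} delivers, and once it is in place both parts are short.
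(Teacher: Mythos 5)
Your argument is correct. Note first that the paper does not actually prove this proposition: it is quoted verbatim from \cite[Proposition 7.1.2]{W93}, so there is no in-paper proof to compare against. Your derivation is a sound, self-contained reconstruction, and the mechanism you isolate --- the convexity refinement of entourages from \cite[Proposition 1.1.3]{W91}, which the paper recalls just before Proposition \ref{p2} --- is exactly the right tool: it converts $u$-closeness of $x_\gamma$ to \emph{some} upper bound $y$ supplied by Condition (C) into containment of the whole rectangle $[x_\gamma,y]^2$ in the given entourage, from which closeness to the least upper bound (for order continuity) or to the tail of the net (for Cauchyness) drops out; part (ii) is then immediate from Dedekind completeness. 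The only detail worth making explicit: Condition (C) as stated applies to \emph{order bounded} monotone nets, and an increasing net with $x_\gamma\uparrow x$ is a priori only bounded above, so in the order-continuity step you should first pass to a tail $(x_\gamma)_{\gamma\ge\gamma_0}$, which lies in $[x_{\gamma_0},x]$, before invoking (C); this changes nothing since convergence of a tail is convergence of the net. Your remark that the exhaustiveness argument in fact yields $u$-Cauchyness of every order bounded monotone net (not merely $\restr{u}{[a,b]}$-Cauchyness) is also correct and slightly stronger than what local exhaustiveness requires.
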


 \begin{theo}[\cite{W93}, Corollary 7.2.4]\label{W93C7.2.4}
 Let $u$ and $v$ be exhaustive lattice uniformities on the lattice $L$ satisfying the Condition (C).  If $N(u)=N(v)$, then the $u$-topology coincides with the $v$-topology.
 \end{theo}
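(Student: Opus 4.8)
The plan is to show that the two topologies coincide by proving that convergence of a net in one uniformity forces convergence to the same limit in the other; since the hypotheses on $u$ and $v$ are symmetric, a single such implication suffices. First I would dispose of the non-Hausdorff part. Because the $u$- and $v$-topologies have the same point closures, namely $N(u)[x]=N(v)[x]$ (using $N(u)=N(v)$), both topologies factor through the quotient of $L$ by the common closed congruence $N:=N(u)=N(v)$. The induced uniformities $\bar u,\bar v$ on $L/N$ are Hausdorff, remain exhaustive and still satisfy Condition~(C), and the original topologies agree iff the quotient ones do; so I may assume $u,v$ Hausdorff with $N(u)=N(v)=\Delta$.

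Next I would reduce to \emph{comparable} uniformities by passing to $w:=u\vee v$. By the remark following Proposition~\ref{dia}, $w$ is again a lattice uniformity, finer than both $u$ and $v$, with $N(w)=N(u)\cap N(v)=\Delta$. Exhaustiveness of $w$ is immediate from the criterion recalled before Theorem~\ref{t1}: a monotone sequence is $u$- and $v$-Cauchy, hence $w$-Cauchy. The one genuine verification is Condition~(C) for $w$. Given an increasing order bounded net $(x_\gamma)$ and a basic entourage $U\cap V\in w$ with $U\in u$, $V\in v$, choose by the rectangle property recalled before Proposition~\ref{p2} (see \cite{W91}) entourages $U'\in u$, $V'\in v$ with $[c\wedge d,c\vee d]^2\subseteq U$ whenever $(c,d)\in U'$, and similarly for $V'$. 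Condition~(C) for $u$ and for $v$ yields upper bounds $a,b$ of $(x_\gamma)$ with $(x_\gamma,a)\in U'$ and $(x_\gamma,b)\in V'$ eventually. Then $a\wedge b$ is again an upper bound, and from $x_\gamma\le a\wedge b\le a$ and $x_\gamma\le a\wedge b\le b$ the rectangle property gives $(x_\gamma,a\wedge b)\in U\cap V$ eventually. Hence $w$ satisfies Condition~(C), and it is enough to prove the theorem when one uniformity refines the other; applying that special case to the pairs $(u,w)$ and $(v,w)$ then yields $u$-topology $=w$-topology $=v$-topology.

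It remains to treat exhaustive lattice uniformities $u\subseteq w$ satisfying Condition~(C) with $N(u)=N(w)=\Delta$. Since $w$ is finer, $w$-convergence already implies $u$-convergence, so only the converse needs proof: every $u$-convergent net in $L$ must be $w$-convergent. Here I would invoke the completion machinery: by Proposition~\ref{7.1.2}(i) both uniformities are o.c.\ and locally exhaustive, and by Corollary~\ref{com} the completions $(\tilde L_u,\tilde u)$ and $(\tilde L_w,\tilde w)$ are complete lattices carrying order continuous uniformities into which $L$ embeds densely; the identity on $L$ extends to a continuous surjective lattice homomorphism $\tilde L_w\to\tilde L_u$ fixing $L$. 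Condition~(C) lets one write each element of either completion as an order limit of a monotone net drawn from $L$, and exhaustiveness guarantees these limits exist; the goal is to leverage this to identify the two completions as a single complete lattice on which $\tilde u$ and $\tilde w$ carry the same uniform structure, whence the $u$- and $w$-topologies on $L$ coincide.

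The main obstacle is precisely this last identification: showing that on a complete lattice a Hausdorff order continuous exhaustive lattice uniformity is pinned down, up to its topology, by $N$ alone. The naive route---sandwiching a convergent net between its order $\liminf$ and $\limsup$ and hoping these equal the limit---fails, as the measure topology on a measure algebra already shows, since convergence in measure does not force order (a.e.) convergence. Overcoming it needs the finer structure theory of exhaustive lattice uniformities of \cite{W93}, of which the present statement is Corollary~7.2.4; that theory supplies the order-theoretic description of the $\tilde u$-Cauchy filters required to conclude that the $u$- and $w$-Cauchy, hence convergent, nets coincide.
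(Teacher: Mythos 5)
This statement is not proved in the paper at all: it is imported verbatim from \cite{W93} (Corollary 7.2.4) and used as a black box, so there is no internal argument to measure your attempt against. Judged on its own terms, your plan contains correct and worthwhile reductions --- passing to the Hausdorff quotient by the common congruence $N$, and replacing the pair $u,v$ by the comparable pair(s) $u\subseteq w$ and $v\subseteq w$ with $w=u\vee v$; your verification that $w$ inherits exhaustivity, that $N(w)=\Delta$, and that $w$ satisfies Condition~(C) via the rectangle property of \cite[Proposition 1.1.3]{W91} all check out. But these reductions only reshape the problem; they do not touch its substance.

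The genuine gap is the final step, and you name it yourself: for Hausdorff exhaustive lattice uniformities $u\subseteq w$ satisfying Condition~(C) with $N(u)=N(w)=\Delta$, you must show that $u$-convergence forces $w$-convergence, and your proposed resolution is to ``invoke the finer structure theory of exhaustive lattice uniformities of \cite{W93}, of which the present statement is Corollary~7.2.4.'' That is circular: you are reducing the theorem to an equivalent special case and then citing the body of results from which the theorem is extracted. Nothing available in the present paper closes this --- Proposition~\ref{7.1.2}, Corollary~\ref{com} and Theorem~\ref{t1} give you complete lattices and order continuity on the completions, but (as your measure-algebra remark correctly shows) order continuity alone does not let you recover the topology from $N$, since a $u$-convergent net need not order-converge. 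What is actually needed is Weber's uniqueness/decomposition theory for exhaustive lattice uniformities (Section~7 of \cite{W93}), none of which you reproduce. So the proposal is an honest and partially useful reduction, but it is not a proof.
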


Let us briefly consider locally solid $\ell$-groups (in particular, locally solid Riesz spaces). Following \cite{AB} (for locally solid Riesz spaces) we call a locally solid group topology a \emph{Lebesgue topology} (resp. \emph{pre-Lebesgue topology}) when the induced uniformity is o.c. (resp. locally exhaustive).
In \cite[Proposition 7.1.5]{W93} it is shown that the uniformity induced by a locally solid group topology on an Archimedean $\ell$-group satisfies Condition (C) iff it is o.c.. Therefore, by Proposition \ref{7.1.2} (a), if an Archimedean locally solid $\ell$-group is Lebesgue, then it is pre-Lebesgue; this generalizes \cite[Theorem 3.23 on pg 87]{AB} where this implication is given for Archimedean locally solid Riesz spaces.

The `uniqueness' result in Theorem \ref{W93C7.2.4} implies that if $\tau$ and $\sigma$ are Hausdorff Lebesgue locally solid group topologies on an Archimedean $\ell$-group $G$, then $\sigma$ and $\tau$ induce the same topology on the order bounded subsets of $G$. In particular, for locally solid Riesz spaces, this gives \cite[Theorem 4.22 on pg 106]{AB}.

Sometimes, for example to apply Theorem \ref{t1}, it is convenient to consider the Hausdorff uniform lattice associated with a uniform lattice $(L,u)$.  Let us briefly outline this factorisation procedure.

\begin{prop}[\cite{W91}, Proposition 1.2.4]\label{q}
Let $(L,u)$ be a uniform lattice. Define the equivalence relation $\sim_u$ by setting $a\sim_u b$ iff $(a,b)\in N(u)$. Let $\hat{a}$ denote the equivalence class containing the element $a\in L$.  Then the  operations $\hat a\vee \hat b:=\widehat{a\vee b}$ and $\hat a\wedge \hat b:=\widehat{a\wedge b}$ turn the quotient $\hat{L}:= L/\sss{\sim_u}$ into a lattice, and the family $\widehat u:=\{\widehat U: U\in u\}$, where $\widehat U:=\{(\hat a,\hat b):(a,b)\in U\}$, is a Hausdorff lattice uniformity on $\hat{L}$.  For any closed subset $U$ of $(L,u)^2$ we have $(a,b)\in U$ iff $(\hat a,\hat b)\in \hat U$.
\end{prop}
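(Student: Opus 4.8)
The plan is to proceed in four steps: first establish that $\sim_u$ is a lattice congruence, so that $\hat L$ is a lattice; then verify the uniformity axioms for $\widehat u$; then check Hausdorffness together with the lattice-uniformity property; and finally settle the statement on closed sets.

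First I would record the standard facts about $N(u)$. Since every entourage contains $\Delta$, contains a symmetric entourage, and contains some $V\in u$ with $V\circ V\subseteq U$, the set $N(u)=\bigcap_{U\in u}U$ is reflexive, symmetric and transitive; hence $\sim_u$ is an equivalence relation. To see that it is compatible with $\vee$ and $\wedge$, I would invoke uniform continuity: given $U\in u$ there is $V\in u$ with $(a\vee c,\,b\vee d)\in U$ whenever $(a,b),(c,d)\in V$; applying this with $(a,b),(c,d)\in N(u)\subseteq V$ yields $(a\vee c,\,b\vee d)\in N(u)$, and symmetrically for $\wedge$. Thus $\sim_u$ is a lattice congruence, the induced operations on $\hat L$ are well defined, and the lattice identities descend from $L$ along the surjective homomorphism $a\mapsto\hat a$, so $\hat L$ is a lattice.

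Next I would verify that the sets $\widehat U$ form a base of a uniformity. Reflexivity ($\widehat\Delta\subseteq\widehat U$), the existence of symmetric members, and the filter-base condition $\widehat{U\cap V}\subseteq\widehat U\cap\widehat V$ are all immediate from the corresponding properties in $u$. The one genuinely delicate point — and the main obstacle — is the composition axiom, because composing in the quotient forces a change of representative at the intermediate point. I would handle this by choosing, for a given $U\in u$, an entourage $V\in u$ with $V\circ V\circ V\subseteq U$: if $(\hat a,\hat b),(\hat b,\hat c)\in\widehat V$, pick representatives with $(a',b_1)\in V$ and $(b_2,c')\in V$ where $a'\sim_u a$, $b_1\sim_u b\sim_u b_2$, $c'\sim_u c$; since $(b_1,b_2)\in N(u)\subseteq V$, the threefold composition gives $(a',c')\in U$, whence $(\hat a,\hat c)\in\widehat U$. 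This $N(u)\subseteq V$ bridging is exactly what makes the quotient uniformity work, and the same device reappears below.

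For Hausdorffness I would show $N(\widehat u)=\widehat\Delta$: if $(\hat a,\hat b)\in\widehat U$ for every $U$, then for each $U$ I choose $V$ with $V\circ V\circ V\subseteq U$ and representatives $a'\sim_u a$, $b'\sim_u b$ with $(a',b')\in V$; since $(a,a'),(b',b)\in N(u)\subseteq V$, I obtain $(a,b)\in V\circ V\circ V\subseteq U$, and as $U$ was arbitrary $(a,b)\in N(u)$, i.e. $\hat a=\hat b$. That $\widehat u$ is a lattice uniformity I would deduce from Proposition \ref{dia}: given $U$, pick $V$ with $V\vee V\subseteq U$ and $V\wedge V\subseteq U$; lifting representatives and using that $\sim_u$ is a congruence gives $\widehat V\vee\widehat V\subseteq\widehat U$ and $\widehat V\wedge\widehat V\subseteq\widehat U$, so $\widehat u$ satisfies condition (ii) of that proposition. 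Finally, for a closed $U\subseteq(L,u)^2$ the forward implication is definitional; for the converse I would use that in a uniform space the closure of a point is its class of indistinguishable points, so the closure of $(a',b')$ in $L^2$ consists of exactly those $(a,b)$ with $(a,a'),(b,b')\in N(u)$. Hence if $(\hat a,\hat b)\in\widehat U$, choosing $(a',b')\in U$ with $a'\sim_u a$, $b'\sim_u b$ places $(a,b)$ in the closure of $(a',b')$, which lies in $U$ as $U$ is closed; thus $(a,b)\in U$.
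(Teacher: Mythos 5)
Your proof is correct. The paper itself offers no argument for this statement---it is quoted from \cite{W91}, Proposition 1.2.4---so there is no in-text proof to compare against, but what you have written is the standard verification, and the key device (inserting $N(u)\subseteq V$ at the intermediate point so that a threefold composition $V\circ V\circ V\subseteq U$ absorbs the change of representatives) is exactly the right one; the same bridging correctly handles Hausdorffness, and your identification of the closure of a point $(a',b')$ in $(L,u)^2$ with its $N(u)$-equivalence class settles the closed-set claim. The only cosmetic omission is that the proposition asserts that $\widehat u=\{\widehat U:U\in u\}$ is itself the uniformity rather than merely a base for one; this is immediate, since any $W\subseteq \hat L^2$ with $W\supseteq\widehat U$ equals $\widehat{U'}$ for $U':=\pi^{-1}(W)\supseteq U$, where $\pi(a,b):=(\hat a,\hat b)$.
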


Lattice uniformities can be generated by systems of particular semimetrics\footnote{Semimetrics are assumed to be real-valued.}. If $q>1$ and $D$ is a system of semimetrics satisfying
\begin{equation*}%\label{e1}
d(x\vee z,y\vee z)\le q\cdot d(x,y)\quad\text{and}\quad d(x\wedge z, y\wedge z)\le q\cdot d(x,y)\quad\text{for every $x,y,z\in L$ and $d\in D$,}
\end{equation*}
then the uniformity generated by $D$ on $L$ is a lattice uniformity. Vice versa we have:

\begin{prop}[\cite{W3}, Corollaries 1.4 and 1.6] \label{semi}
Let $(L,u)$ be a uniform lattice and $q>1$. Then $u$ is generated by a system $D$ of semimetrics satisfying
\begin{equation*}%\label{e1}
d(x\vee z,y\vee z)\le q\cdot d(x,y)\quad\text{and}\quad d(x\wedge z, y\wedge z)\le q\cdot d(x,y)\quad\text{for every $x,y,z\in L$ and $d\in D$.}
\end{equation*}

If $L$ is distributive, then one can here choose also $q=1$.
\end{prop}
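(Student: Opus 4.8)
My plan is to obtain the generating family $D$ by the classical chain (Frink) metrization procedure, run on sequences of entourages chosen from the outset to absorb the lattice translations $x\mapsto x\vee z$ and $x\mapsto x\wedge z$. Throughout I write $U\vee\Delta:=\{(a\vee c,\,b\vee c):(a,b)\in U,\ c\in L\}$ and $U\wedge\Delta$ analogously, so that Proposition \ref{dia}(iii) supplies, inside any member of $u$, an entourage absorbed by these operations. Since the inequality in the statement only weakens as $q$ grows, it suffices to treat a fixed $q\in(1,2]$.

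First I would fix a base of $u$ consisting of symmetric entourages and, for each member $W$ of it, build recursively a sequence $L\times L=U_0\supseteq U_1\supseteq\cdots$ with $U_1\subseteq W$ and, for every $n$, the composition condition $U_{n+1}\circ U_{n+1}\circ U_{n+1}\subseteq U_n$ together with the lattice conditions $U_{n+1}\vee\Delta\subseteq U_n$ and $U_{n+1}\wedge\Delta\subseteq U_n$; only finitely many containments constrain $U_{n+1}$, so Proposition \ref{dia}(iii) and the usual halving produce it inside $u$. I then form the gauge $\rho(a,b):=\inf\{q^{-n}:(a,b)\in U_n\}$ and its chain infimum
\[
d(a,b):=\inf\Bigl\{\textstyle\sum_{i=0}^{m-1}\rho(x_i,x_{i+1})\ :\ x_0=a,\ x_m=b\Bigr\}.
\]
Because $\Delta\subseteq U_{n+1}$ gives $U_{n+1}\circ U_{n+1}\subseteq U_{n+1}\circ U_{n+1}\circ U_{n+1}\subseteq U_n$, the gauge obeys the quasi-triangle inequality $\rho(a,c)\le q\max\{\rho(a,b),\rho(b,c)\}$ with constant $q\le 2$, so Frink's lemma yields the semimetric bounds $\tfrac12\rho\le d\le\rho$. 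The semimetrics $d_W$ so obtained generate $u$.

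Next I would read off the Lipschitz estimate directly from the chain-infimum definition, avoiding the lossy comparison $\rho\le 2d$. The containment $U_{n+1}\vee\Delta\subseteq U_n$ says precisely that $\rho(a\vee c,b\vee c)\le q\,\rho(a,b)$ for all $a,b,c$, and likewise for $\wedge$. Applying the translation $x\mapsto x\vee z$ to any chain $a=x_0,\dots,x_m=b$ produces the chain $a\vee z,\dots,b\vee z$, whence
\[
d(a\vee z,b\vee z)\ \le\ \sum_i\rho(x_i\vee z,\,x_{i+1}\vee z)\ \le\ q\sum_i\rho(x_i,x_{i+1}),
\]
and taking the infimum over all chains for $(a,b)$ gives $d(a\vee z,b\vee z)\le q\,d(a,b)$, and symmetrically for $\wedge$. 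This settles the general case.

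Finally, for distributive $L$ I would sharpen this to $q=1$. Distributivity is exactly the identity $(x\wedge y)\vee z=(x\vee z)\wedge(y\vee z)$, which says that $x\mapsto x\vee z$ is a lattice homomorphism; one checks that it maps the interval $[a\wedge b,a\vee b]$ \emph{onto} $[(a\wedge b)\vee z,(a\vee b)\vee z]$, and dually for $x\mapsto x\wedge z$. A lattice-uniform gauge, being determined by the interval (``difference'') structure on which these translations act as surjective homomorphisms, cannot increase under them; constructing the semimetrics so as to record exactly this interval oscillation therefore upgrades the previous estimate to $\rho(a\vee c,b\vee c)\le\rho(a,b)$, and the chain-infimum argument then returns $d(a\vee z,b\vee z)\le d(a,b)$. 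I expect the genuine obstacle to be precisely this last step, together with the bookkeeping that the interval-based gauge still satisfies the composition/quasi-triangle condition needed for $D$ to generate $u$: it is here that distributivity — rather than mere modularity — is indispensable, since it is what forces the lattice translations to behave as honest quotient maps and so permits the optimal constant $q=1$.
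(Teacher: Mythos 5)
The paper does not prove this proposition; it is quoted from \cite{W3} (Corollaries 1.4 and 1.6), so your attempt can only be judged on its own merits. Your first half --- running the three-fold-composition metrization lemma on entourage chains that additionally satisfy $U_{n+1}\vee\Delta\subseteq U_n$ and $U_{n+1}\wedge\Delta\subseteq U_n$, and then reading the Lipschitz bound $d(x\vee z,y\vee z)\le q\,d(x,y)$ off the chain infimum link by link --- is sound and is the natural route to the $q>1$ statement. One small repair: the chain-splitting induction that gives $\tfrac12\rho\le d$ closes only when the quasi-triangle constant satisfies $q^2\le 2$, so you should fix $q\in(1,\sqrt2\,]$ rather than $(1,2]$; this is harmless because the assertion weakens as $q$ grows.

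The distributive case $q=1$, however, is not proved: you assert that a gauge ``recording the interval oscillation'' cannot increase under $x\mapsto x\vee z$ because these translations map intervals onto intervals, but you never define such a gauge, never verify it is a semimetric generating $u$, and surjectivity onto intervals does not by itself yield monotonicity of a semimetric. The step you are missing is a concrete symmetrization: starting from a semimetric $d$ already satisfying the $q$-estimates, set
\begin{equation*}
d'(x,y):=\sup\bigl\{d\bigl(f_{a,b}(x),f_{a,b}(y)\bigr)\,:\,a,b\in L,\ a\le b\bigr\}.
\end{equation*}
Taking $a=x\wedge y$, $b=x\vee y$ gives $d\le d'$, while the two-step Lipschitz estimate gives $d'\le q^2 d$, so $d'$ is finite and the family $\{d'\}$ generates the same uniformity $u$; being a supremum of semimetrics, $d'$ is a semimetric. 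The point where distributivity enters is that the family $\{f_{a,b}:a\le b\}$ is closed under precomposition with the translations: $f_{a,b}(x\vee z)=f_{a\vee(b\wedge z),\,b}(x)$ and $f_{a,b}(x\wedge z)=f_{a,\,a\vee(b\wedge z)}(x)$ (compare the composition identity in the proof of Proposition \ref{p1}). Hence the supremum defining $d'(x\vee z,y\vee z)$ ranges over a subfamily of the one defining $d'(x,y)$, giving $d'(x\vee z,y\vee z)\le d'(x,y)$ and dually for $\wedge$, i.e.\ $q=1$. In a non-distributive lattice the iterated translations generate arbitrarily long alternating polynomials, each iteration costing a factor $q$, and the supremum blows up --- that, rather than surjectivity onto intervals, is why distributivity is indispensable here.
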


Let $(L,u)$ be a uniform lattice and $F$ a set of lattice homomorphisms $f:L\rightarrow (L,u)$. The sets $U_f:=\{(x,y)\in L^2: (f(x),f(y))\in U\}$, where $U\in u$ and $f\in F$, form a subbase of the initial uniformity of $F$, i.e. of the coarsest uniformity on $L$ making $f$ uniformly continuous for all $f\in F$ (see \cite[Proposition 4 of II.2.3]{Bourbaki}).
Using Proposition \ref{dia}, one sees that this uniformity is a lattice uniformity: If $U,V\in u$ with $V\vee \Delta\subseteq U$ and $V\wedge \Delta\subseteq U$, then $V_f\vee \Delta\subseteq U_f$ and $V_f\wedge \Delta\subseteq U_f$ for all $f\in F$.

We are here interested in the initial uniformity w.r.t. functions of the type  $f_{a,b}(x):=(x\wedge b)\vee a$ or  $g_{a,b}(x):=(x\vee a)\wedge b$. Proposition \ref{d} {\rm (a)} is the reason that later on we only consider distributive lattices. Proposition \ref{d} {(b)} shows that it is indifferent whether we study  the initial uniformity w.r.t. functions of the type  $f_{a,b}$ or of the type $g_{a,b}$; moreover, we can hereby always assume that $a\leq b$.

\begin{prop}\label{d}
For a lattice $L$ the following statements are equivalent:
\begin{enumerate}[{\rm(i)}]
\item  $L$ is distributive.
\item  $f_{a,b}$ is a lattice homomorphism for all $a,b\in L$.
\item $g_{a,b}$ is a lattice homomorphism for all $a,b\in L$.
\end{enumerate}
(b) If $L$ is distributive, then
 $f_{a,b}=f_{a,a\vee b}=g_{a,a\vee b}$ and  $g_{a,b}=g_{a\wedge b,b}=f_{a\wedge b,b}$
  for every $a,b\in L$.
\end{prop}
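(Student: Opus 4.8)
The plan for part (a) is to establish the equivalence (i) $\Leftrightarrow$ (ii) by a direct argument and then to obtain (i) $\Leftrightarrow$ (iii) for free by duality: passing to the order-dual lattice interchanges $\vee$ and $\wedge$, carries $f_{a,b}$ to $g_{a,b}$, and preserves distributivity (the distributive law being self-dual). So the whole content of (a) lies in the two implications between (i) and (ii).

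For (i) $\Rightarrow$ (ii) I would simply compute that $f_{a,b}$ preserves both operations. Preservation of $\vee$ uses the distributive law once: $f_{a,b}(x\vee y)=((x\vee y)\wedge b)\vee a=((x\wedge b)\vee(y\wedge b))\vee a=f_{a,b}(x)\vee f_{a,b}(y)$. Preservation of $\wedge$ uses idempotence and associativity of $\wedge$ together with the dual distributive law $a\vee(u\wedge v)=(a\vee u)\wedge(a\vee v)$: writing $u=x\wedge b$ and $v=y\wedge b$, one gets $f_{a,b}(x)\wedge f_{a,b}(y)=(u\vee a)\wedge(v\vee a)=a\vee(u\wedge v)=a\vee(x\wedge y\wedge b)=f_{a,b}(x\wedge y)$. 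These are routine.

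The heart of the matter, and the step I expect to be the only non-routine one, is (ii) $\Rightarrow$ (i); here only the $\vee$-homomorphism property is needed. It yields, for every $a$, the identity $((x\vee y)\wedge b)\vee a=((x\wedge b)\vee(y\wedge b))\vee a$. Since lattices admit no cancellation, the key idea is to specialise $a$ so as to collapse both sides simultaneously: take $a:=(x\wedge b)\vee(y\wedge b)$, which always satisfies $a\le(x\vee y)\wedge b$. Then the left-hand side reduces to $(x\vee y)\wedge b$ and the right-hand side reduces to $a$ itself, giving exactly $(x\vee y)\wedge b=(x\wedge b)\vee(y\wedge b)$ for all $x,y,b$, i.e. distributivity. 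Recognising this particular choice of $a$ is the crux of the argument.

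For part (b) I would isolate two identities valid in every distributive lattice. First, $f_{a,b}=f_{a,a\vee b}$: expanding $f_{a,a\vee b}(x)=(x\wedge(a\vee b))\vee a$ and distributing $x\wedge(a\vee b)=(x\wedge a)\vee(x\wedge b)$, the term $x\wedge a\le a$ is absorbed, leaving $(x\wedge b)\vee a=f_{a,b}(x)$. Second, whenever $a\le c$ one has $f_{a,c}=g_{a,c}$: indeed $g_{a,c}(x)=(x\vee a)\wedge c=(x\wedge c)\vee(a\wedge c)=(x\wedge c)\vee a=f_{a,c}(x)$, using $a\wedge c=a$. Applying the second identity with $c=a\vee b\ge a$ gives $f_{a,a\vee b}=g_{a,a\vee b}$, completing the chain $f_{a,b}=f_{a,a\vee b}=g_{a,a\vee b}$. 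The chain $g_{a,b}=g_{a\wedge b,b}=f_{a\wedge b,b}$ then follows either by the duality used in part (a) or by the mirror computations, noting that $a\wedge b\le b$ makes $g_{a\wedge b,b}=f_{a\wedge b,b}$ another instance of $f_{a',c}=g_{a',c}$ with $a'\le c$.
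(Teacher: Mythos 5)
Your proof is correct. The paper in fact omits the proof of this proposition entirely (``We omit the proof which is routine''), so there is nothing to compare against; your argument, and in particular the specialisation $a:=(x\wedge b)\vee(y\wedge b)$ that turns the $\vee$-homomorphism identity into the distributive law in the only non-routine implication (ii)$\Rightarrow$(i), is exactly the standard way to fill in the gap. One negligible imprecision: order-dualisation carries $f_{a,b}$ to $g_{b,a}$ rather than to $g_{a,b}$, but since (iii) is quantified over all pairs $a,b\in L$ this does not affect the argument.
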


We omit the proof which is routine.

\vskip 13pt
\section{ The uniformity $u^*$ inducing `unbounded $u$-convergence'.}

In this section, let $u$ be a lattice uniformity on a distributive lattice $L$.

  For $a,b\in L$, let $u_{a,b}$ be the initial uniformity of $f_{a,b}$. Since $f_{a,b}:(L,u)\rightarrow (L,u)$ is uniformly continuous,  $u_{a,b}\subseteq u$ and, as explained before, $u_{a,b}$ is a lattice uniformity since $L$ is assumed to be distributive. Moreover, $v_{a,b}\subseteq u_{a,b}$ if $v$ is a lattice uniformity on $L$ coarser than $u$.

\begin{prop}\label{p1}  Let $a,b,c,d \in L$.  Then
\[u_{a\vee(b\wedge c),\,b\wedge d}\subseteq  (u_{a,b})_{c,d}.\]
In particular:
\begin{enumerate}[{\rm(i)}]
\item If $a\le c\le d\le b$, then $u_{c,d}\subseteq u_{a,b}$, and
\item  $\left(u_{a,b}\right)_{a,b}=u_{a,b}$.
\end{enumerate}
\end{prop}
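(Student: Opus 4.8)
The plan is to reduce the whole proposition to a single algebraic identity about the maps $f_{a,b}$, and then run a routine chase of generating entourages; distributivity enters only once. First I would record the key identity: for every $x\in L$,
\[
f_{a,b}\bigl(f_{c,d}(x)\bigr)=f_{a\vee(b\wedge c),\,b\wedge d}(x).
\]
This is obtained by a direct computation. Since $f_{c,d}(x)=(x\wedge d)\vee c$, we have $f_{a,b}(f_{c,d}(x))=\bigl(((x\wedge d)\vee c)\wedge b\bigr)\vee a$, and expanding $((x\wedge d)\vee c)\wedge b=(x\wedge d\wedge b)\vee(c\wedge b)$ by distributivity gives $(x\wedge(b\wedge d))\vee(a\vee(b\wedge c))$, which is exactly $f_{a\vee(b\wedge c),\,b\wedge d}(x)$. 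This is the only step that uses that $L$ is distributive, and it carries the entire content of the statement.

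Next I would unwind the two uniformities. Because $u_{a,b}$ is the initial uniformity of the single map $f_{a,b}\colon L\to(L,u)$, the sets $(f_{a,b}\times f_{a,b})^{-1}(U)$ with $U\in u$ form a base of $u_{a,b}$ (they are stable under finite intersection, since $(f\times f)^{-1}(U_1)\cap(f\times f)^{-1}(U_2)=(f\times f)^{-1}(U_1\cap U_2)$). Likewise $(u_{a,b})_{c,d}$ has the base consisting of the sets $(f_{c,d}\times f_{c,d})^{-1}(W)$ with $W\in u_{a,b}$. Now a generating entourage of $u_{a\vee(b\wedge c),\,b\wedge d}$, namely $\bigl(f_{a\vee(b\wedge c),\,b\wedge d}\times f_{a\vee(b\wedge c),\,b\wedge d}\bigr)^{-1}(U)$ for $U\in u$, rewrites by the identity above as $(f_{c,d}\times f_{c,d})^{-1}\bigl((f_{a,b}\times f_{a,b})^{-1}(U)\bigr)$; since $(f_{a,b}\times f_{a,b})^{-1}(U)\in u_{a,b}$, this set is a generating entourage of $(u_{a,b})_{c,d}$. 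As $(u_{a,b})_{c,d}$ is a filter, the inclusion $u_{a\vee(b\wedge c),\,b\wedge d}\subseteq(u_{a,b})_{c,d}$ follows. (Running the same chase in reverse, using that $W\in u_{a,b}$ contains some $(f_{a,b}\times f_{a,b})^{-1}(U)$, yields the opposite inclusion as well, so in fact equality holds; only $\subseteq$ is needed for what follows.)

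Finally, for the two special cases I would combine this with the remark that $(u_{a,b})_{c,d}\subseteq u_{a,b}$: indeed $f_{c,d}$ is the composition of the uniformly continuous maps $\,\cdot\wedge d\,$ and $\,\cdot\vee c\,$, hence uniformly continuous on the lattice uniformity $u_{a,b}$, so the coarsest uniformity making $f_{c,d}$ uniformly continuous into $(L,u_{a,b})$ sits below $u_{a,b}$. For (i), the hypothesis $a\le c\le d\le b$ gives $b\wedge c=c$, $a\vee c=c$ and $b\wedge d=d$, so the main inclusion reads $u_{c,d}\subseteq(u_{a,b})_{c,d}\subseteq u_{a,b}$. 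For (ii), taking $c=a$, $d=b$ and using the absorption law $a\vee(b\wedge a)=a$ together with $b\wedge b=b$, the main inclusion gives $u_{a,b}\subseteq(u_{a,b})_{a,b}$, while coarseness gives $(u_{a,b})_{a,b}\subseteq u_{a,b}$; hence equality. There is no deep obstacle here: the single genuine point is the distributive computation in the first step, and the only thing one must watch is the direction of the uniformity inclusions, remembering that an initial uniformity is the \emph{coarsest} one making the map uniformly continuous and therefore lies below any uniformity for which the map is already uniformly continuous.
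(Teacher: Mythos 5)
Your proof is correct and follows essentially the same route as the paper's: the identical distributivity computation giving $f_{a,b}\circ f_{c,d}=f_{a\vee(b\wedge c),\,b\wedge d}$, followed by the observation that $(u_{a,b})_{c,d}$ makes this composite uniformly continuous into $(L,u)$ (you verify this by an explicit entourage chase where the paper simply invokes closure of uniform continuity under composition), and the same specializations for (i) and (ii). Your parenthetical remark that equality in fact holds is correct but not needed or claimed in the paper.
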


\begin{proof}
First observe that for $x\in L$
\begin{align*}
f_{a,b}\circ f_{c,d}(x) &=(((x\wedge d)\vee c)\wedge b)\vee a \\
   &=(x\wedge d\wedge b)\vee(c\wedge b)\vee a \\
   &=(x\wedge (d\wedge b))\vee (a\vee (b\wedge c))=f_{a\vee (b\wedge c),\, b\wedge d}(x),
\end{align*}
i.e.  $f_{a,b}\circ f_{c,d}= f_{a\vee (b\wedge c),\, b\wedge d}$.
The functions $f_{c,d}:\left (L,(u_{a,b})_{c,d}\right )\to (L,u_{a,b})$ and $f_{a,b}:(L,u_{a,b})\to(L,u)$ are uniformly continuous, and therefore so is the function
\[f_{a\vee (b\wedge c),\, b\wedge d}=f_{a,b}\circ f_{c,d}:\left(L,\left(u_{a,b}\right)_{c,d}\right)\to (L,u).\] This shows that $u_{a\vee(b\wedge c),\,b\wedge d}\subseteq (u_{a,b})_{c,d}$.

In particular, if $a\le c\le d\le b$, then $a\vee(b\wedge c)=c$ and $b\wedge d=d$, and therefore $u_{c,d}\subseteq \left(u_{a,b}\right)_{c,d}\subseteq u_{a,b}$, and  by setting $a=c$ and $b=d$ one also obtains $u_{a,b} = (u_{a,b})_{a,b}$.
\end{proof}

As observed before, it is enough to consider the uniformities $u_{a,b}$ for $a\leq b$. Define $J(L):=\{(a,b)\in L^2: a\leq b\}$.
  If $\emptyset \neq J\subseteq J(L)$ then the initial uniformity $u_J$ of $\{f_{a,b}: (a,b)\in J\}$ is the uniformity having as a subbase the sets of the form
\[U_{a,b}:=\{(x,y)\in L^2:(f_{a,b}(x),f_{a,b}(y))\in U\}\quad ((a,b)\in J,\ U\in u).\]
Note that $u_J$ is the supremum of the uniformities $u_{a,b}$ where $(a,b)\in J$, and thus $u_J$ is again a lattice uniformity.  Let $u^*:=u_{J(L)}$.

\begin{prop}\label{uj}
Let $J$ and $J'$ be non-empty subsets of $J(L)$.
\begin{enumerate}[{\rm(i)}]
\item $v_J\subseteq u_J\subseteq u$ for every lattice uniformity $v$ coarser than $u$.
\item If  for every $(a,b)\in J$ there exists $(a',b')\in J'$ satisfying $a'\leq a\leq b\leq b'$, then $u_J\subseteq u_{J'}$ and $(u_J)_{J'}=(u_{J'})_J=u_J$.
\item $u_J$ is the weakest lattice uniformity which agrees with $u$ on $[a,b]$ for all $(a,b)\in J$.
\item The $u_J$-topology is the weakest lattice topology which agrees with the $u$-topology on $[a,b]$ for all $(a,b)\in J$.
\end{enumerate}
\end{prop}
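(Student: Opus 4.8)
The plan is to reduce every assertion to the single-pair uniformities $u_{a,b}$, using that $u_J=\sup_{(a,b)\in J}u_{a,b}$ and that forming the initial uniformity of $f_{a,b}$ is monotone in its target: if $v$ is a lattice uniformity coarser than $u$, then $v_{a,b}\subseteq u_{a,b}$ (as recorded before Proposition \ref{p1}). Part (i) is then immediate, since each $u_{a,b}\subseteq u$ forces $u_J\subseteq u$, and $v\subseteq u$ gives $v_J=\sup_{(a,b)\in J}v_{a,b}\subseteq\sup_{(a,b)\in J}u_{a,b}=u_J$.

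For part (ii) the engine is the composition formula $f_{a,b}\circ f_{c,d}=f_{a\vee(b\wedge c),\,b\wedge d}$ obtained in the proof of Proposition \ref{p1}; specialising to $a'\le a\le b\le b'$ it collapses to the two identities $f_{a,b}\circ f_{a',b'}=f_{a,b}=f_{a',b'}\circ f_{a,b}$. The inclusion $u_J\subseteq u_{J'}$ follows by choosing for each $(a,b)\in J$ a pair $(a',b')\in J'$ with $a'\le a\le b\le b'$ and invoking Proposition \ref{p1}(i) to get $u_{a,b}\subseteq u_{a',b'}\subseteq u_{J'}$. For $(u_{J'})_J=u_J$ I would establish $(u_{J'})_{a,b}=u_{a,b}$ for each $(a,b)\in J$: the inclusion ``$\subseteq$'' is monotonicity applied to $u_{J'}\subseteq u$, while ``$\supseteq$'' uses $f_{a',b'}\circ f_{a,b}=f_{a,b}$ to recognise each subbasic entourage of $u_{a,b}$ as a subbasic entourage of $(u_{J'})_{a,b}$. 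Symmetrically, $(u_J)_{J'}=u_J$: here ``$\subseteq$'' holds because each $f_{c,d}$ is uniformly continuous for the lattice uniformity $u_J$, so $(u_J)_{c,d}\subseteq u_J$, and ``$\supseteq$'' comes from $f_{a,b}\circ f_{a',b'}=f_{a,b}$.

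The \emph{crux, and the step I expect to be the main obstacle}, is the minimality in parts (iii) and (iv). The decisive observation is that $f_{a,b}$ maps $L$ into $[a,b]$ and is the identity there, since $f_{a,b}(x)=(x\wedge b)\vee a=x$ for $a\le x\le b$. Hence $\restr{(u_{a,b})}{[a,b]}=\restr{u}{[a,b]}$, and together with $u_{a,b}\subseteq u_J\subseteq u$ this forces $\restr{(u_J)}{[a,b]}=\restr{u}{[a,b]}$; so $u_J$ does agree with $u$ on every interval $[a,b]$, $(a,b)\in J$. To see it is the weakest such, let $w$ be any lattice uniformity agreeing with $u$ on all these intervals. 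Since $f_{a,b}$ is built from $\wedge$ and $\vee$, it is uniformly continuous for $w$, whence $w_{a,b}\subseteq w$; and because $f_{a,b}$ lands in $[a,b]$, both $u_{a,b}$ and $w_{a,b}$ coincide with the initial uniformity of $f_{a,b}$ viewed as a map into $([a,b],\restr{u}{[a,b]})=([a,b],\restr{w}{[a,b]})$, so $u_{a,b}=w_{a,b}\subseteq w$ and therefore $u_J\subseteq w$.

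Part (iv) is the topological transcription of this argument. The $u_J$-topology is a lattice topology agreeing with the $u$-topology on each $[a,b]$ (the topology of a restricted uniformity being the relativised topology), and its neighbourhoods at a point $x$ are generated by the preimages $f_{a,b}^{-1}(N)$ of $u$-neighbourhoods $N$ of $f_{a,b}(x)$. Thus, given any lattice topology $\sigma$ that agrees with the $u$-topology on each $[a,b]$, it suffices to check that every $f_{a,b}\colon(L,\sigma)\to(L,u\text{-topology})$ is continuous. This is again the factorisation through $[a,b]$: the map $f_{a,b}\colon(L,\sigma)\to(L,\sigma)$ is continuous because $\sigma$ is a lattice topology, its image lies in $[a,b]$, and on $[a,b]$ the topologies $\sigma$ and the $u$-topology coincide; hence $f_{a,b}$ is continuous into $(L,u\text{-topology})$, and the $u_J$-topology is coarser than $\sigma$.
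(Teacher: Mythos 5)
Your proof is correct and follows essentially the same route as the paper: reduce to the single-pair uniformities $u_{a,b}$, use the composition formula $f_{a,b}\circ f_{c,d}=f_{a\vee(b\wedge c),\,b\wedge d}$ for (ii), and for (iii)--(iv) exploit that $f_{a,b}$ maps into $[a,b]$ and is the identity there. The only cosmetic difference is that in (ii) you invoke the collapsed identities $f_{a,b}\circ f_{a',b'}=f_{a,b}=f_{a',b'}\circ f_{a,b}$ directly on subbasic entourages where the paper chains inclusions through Proposition \ref{p1}; the mechanism is the same.
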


\begin{proof}
(i)~is obvious.

(ii)~The first assertion follows  by {\rm(i)} of Proposition \ref{p1} and by the fact that $u_J$ is the supremum of the uniformities $u_{a,b}$ for all $(a,b)\in J$.

For the second assertion, we first note that $(u_J)_{J'}\subseteq u_J$ by (i).  Using {\rm (i)} and  Proposition \ref{p1}, we get $u_{a,b}=(u_{a,b})_{a,b}\subseteq (u_J)_{a,b}\subseteq (u_{J})_{J'}$. Hence $u_J\subseteq(u_J)_{J'}$ since $u_J$ is the supremum of the uniformities $u_{a,b}$, $(a,b)\in J$.
Similarly, one can prove $u_J=(u_{J'})_{J}$

{\rm(iii)}~Let us first show that $u_J$ agrees with $u$ on $[a,b]$ for all $(a,b)\in J$. Let $(a,b)\in J$, $U\in u$ and $U_{a,b}:=\{(x,y)\in L^2: (f_{a,b}(x),f_{a,b}(y))\in U\}$. Then $U_{a,b}\cap [a,b]^2=U\cap [a,b]^2$ since $f_{a,b}(x)=x$ for $x\in [a,b]$. Therefore $u_{a,b}$ and $u$ agree on $[a,b]$. It follows that $u_J$ and $u$ agree on $[a,b]$ since $u_{a,b}\subseteq u_J\subseteq u$.  Now, let $v$ be a lattice uniformity on $L$ which agrees with $u$ on $[a,b]$ for all $(a,b)\in J$. We shall show that $u_{a,b}\subseteq v$ for $(a,b)\in J$. Since $v$ is a lattice uniformity, $f_{a,b}:(L,v)\rightarrow (L,v)$ is uniformly continuous. Since $f_{a,b}(L)\subseteq [a,b]$ and $v|_{[a,b]}=u|_{[a,b]}$, also $f_{a,b}:(L,v)\rightarrow (L,u)$ is uniformly continuous. Therefore $u_{a,b}\subseteq v$.

The proof of {\rm(iv)} is similar to that of {\rm(iii)}.
\end{proof}

\begin{cor}\label{weakest}
\begin{enumerate}[{\rm(i)}]
\item $(u^\ast)^\ast=u^\ast$.
\item $u^*$ is the weakest lattice uniformity which agrees with $u$ on any order bounded subset of $L$.
\item The $u^*$-topology is the weakest lattice topology which agrees with the $u$-topology on any order bounded subset of $L$.
\end{enumerate}
\end{cor}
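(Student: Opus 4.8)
The plan is to read off all three statements from Proposition \ref{uj} applied with $J=J(L)$, since $u^*=u_{J(L)}$ by definition. The one preliminary observation needed is that a subset $S\subseteq L$ is order bounded exactly when $S\subseteq[a,b]$ for some $(a,b)\in J(L)$, and that two lattice uniformities agree on $[a,b]$ if and only if they agree on every subset of $[a,b]$, because the restriction to $S$ is merely the restriction to $S$ of the restriction to $[a,b]$. Consequently the condition ``agrees with $u$ on every order bounded subset of $L$'' is \emph{the same} condition as ``agrees with $u$ on $[a,b]$ for all $(a,b)\in J(L)$'', and likewise after replacing uniformities by their induced topologies and subsets by subspaces.

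For {\rm(i)}, I note that the star operation sends a uniformity $w$ to $w_{J(L)}$, so $(u^*)^*=(u^*)_{J(L)}=(u_{J(L)})_{J(L)}$. This is precisely the object $(u_J)_{J'}$ appearing in Proposition \ref{uj}{\rm(ii)} with $J=J'=J(L)$; the hypothesis of that part, namely that every $(a,b)\in J$ is dominated by some $(a',b')\in J'$ with $a'\le a\le b\le b'$, holds trivially by taking $(a',b')=(a,b)$. Hence $(u_{J(L)})_{J(L)}=u_{J(L)}$, that is, $(u^*)^*=u^*$.

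For {\rm(ii)}, Proposition \ref{uj}{\rm(iii)} with $J=J(L)$ states that $u^*$ is the weakest lattice uniformity agreeing with $u$ on $[a,b]$ for every $(a,b)\in J(L)$; by the preliminary observation this is exactly the weakest lattice uniformity agreeing with $u$ on every order bounded subset of $L$. Statement {\rm(iii)} is identical in structure, now invoking Proposition \ref{uj}{\rm(iv)} in place of {\rm(iii)} and passing to subspace topologies.

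All of the computations are immediate once Proposition \ref{uj} is in hand, so there is no genuine obstacle here; the only point that requires care in the write-up is the equivalence between agreement on order intervals and agreement on arbitrary order bounded subsets, which rests on the compatibility of uniformity (and topology) restriction with passage to a smaller subset. I would state that compatibility explicitly at the outset and then simply quote the relevant parts of Proposition \ref{uj}.
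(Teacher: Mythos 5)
Your proposal is correct and matches the paper's intent exactly: the paper gives no separate proof of Corollary \ref{weakest}, treating it as immediate from Proposition \ref{uj} specialized to $J=J'=J(L)$, which is precisely what you do. Your explicit remark that agreement on all intervals $[a,b]$ is equivalent to agreement on all order bounded subsets (since every such subset sits inside some interval and restriction of uniformities/topologies is transitive) is the only bridging observation needed, and it is sound.
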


In contrast to Corollary \ref{weakest}, in general, $u^*$ is not the weakest uniformity which agrees with $u$ on every order bounded subset of $L$,
and the $u^*$-topology is not the weakest topology which agrees with the $u$-topology on any order bounded subset of $L$:

\begin{exa}\label{ex-r}
Let $u$ be the usual uniformity on $\mathbb{R}$ induced by the absolute value.
\begin{enumerate}[{\rm(a)}]
\item The infimum $v$ of all uniformities on $\mathbb{R}$ which agree with $u$ on all bounded subsets of $\mathbb{R}$ is the trivial uniformity.
\item  The sets $U_n:=\{(x,y)\in \mathbb{R}^2: |x-y|\leq\frac{1}{n} \text{ or } x,y\geq n \text{ or } x,y\leq -n\}$, $n\in \mathbb{N}$, form a base for $u^*$.
\end{enumerate}
\end{exa}
\begin{proof}
{\rm(a)}~
%For $a\in\mathbb{R}$, the sets
%\[V_{n,a}:=\left\{(x,y)\in \mathbb{R}^2: \left(|x-y|\leq\frac{1}{n}\right) \text{ or } \left(|x|, |y|\geq n\right) \text{ or } \left(|x|\geq n, |y-a|\leq \frac{1}{n}\right) \text{ or } \left(|y|\geq n, |x-a|\leq \frac{1}{n}\right)\right\},\]
% where $n\in \mathbb{N}$,
For $a\in\mathbb{R}$, and $n\in\mathbb N$ let $V_{n,a}$ be the subset of $\mathbb R^2$ defined by
 \[(x,y)\in V_{n,a}\ \Longleftrightarrow\ \begin{cases}
                                           |x-y|\leq\frac{1}{n},\ \mbox{or} \\
                                            |x|, |y|\geq n,\ \mbox{or}\\
                                           |x|\geq n, |y-a|\leq \frac{1}{n},\ \mbox{or}\\
                                           |y|\geq n, |x-a|\leq \frac{1}{n}\,.
                                           \end{cases}\]
                                           The family $\{V_{n,a}:n\in\mathbb N\}$ form a base for a uniformity $v_a$ on $\mathbb{R}$ which agrees with $u$ on all bounded subsets of $\mathbb{R}$. Then $n\rightarrow a$ $(v_a)$ and therefore $n\rightarrow a$ ($v$) since $v\subseteq v_a$. Let now $V\in v$ and $a,b\in\mathbb{R}$. Then $n\rightarrow a$ ($v$) and $n\rightarrow b$ ($v$), hence $(a,b)\in V$. This shows $V=\mathbb{R}^2.$

{\rm (b)}~immediately follows from the description of $u^*$ as initial uniformity.
\end{proof}

\begin{rem}
Note that in Example \ref{ex-r}, the $u$-topology coincides with the $u^*$-topology; $u$ is complete whereas $u^*$ is not complete; the completion of $(\mathbb{R},u^*)$ is $[-\infty,+\infty]$ with the usual compact uniformity.
\end{rem}

\begin{prop}\label{pH}
Let $S$ be a sublattice of $L$ and $I:=J(S)=\{(a,b)\in S^2:a\le b\}$.  Then $u_I$ is Hausdorff iff $u$ is Hausdorff and $x=\sup_{s\in S} s\wedge x=\inf_{s\in S}s\vee x$ for every $x\in L$.
\end{prop}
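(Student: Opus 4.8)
The plan is to base the whole argument on the explicit form of $N(u_I)$. Since the sets $U_{a,b}=\{(x,y)\in L^2:(f_{a,b}(x),f_{a,b}(y))\in U\}$, with $(a,b)\in I$ and $U\in u$, form a subbase of $u_I$, I would first observe that $(x,y)\in N(u_I)=\bigcap_{W\in u_I}W$ holds exactly when $(f_{a,b}(x),f_{a,b}(y))\in N(u)$ for every $(a,b)\in I$. Recalling that a lattice uniformity is Hausdorff iff its $N$ equals $\Delta$, and that $u_I\subseteq u$ gives $N(u)\subseteq N(u_I)$, the Hausdorffness of $u_I$ at once forces $N(u)=\Delta$, i.e.\ $u$ Hausdorff; and once $u$ is Hausdorff the condition $(x,y)\in N(u_I)$ collapses to the lattice identity $f_{a,b}(x)=f_{a,b}(y)$ for all $(a,b)\in I$. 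So the whole task reduces to showing that ``these clamps separate points'' is equivalent to the density condition $(\ast)$ that $x=\sup_{s\in S}(s\wedge x)=\inf_{s\in S}(s\vee x)$ for every $x\in L$.

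For necessity I would assume $u_I$ Hausdorff and prove $x=\sup_{s\in S}(s\wedge x)$, the infimum identity being dual. Here $x$ is visibly an upper bound of $\{s\wedge x:s\in S\}$; given any upper bound $w$, I would set $y:=x\wedge w$ and use $s\wedge x\le w$ to get $s\wedge y=s\wedge x$ for all $s\in S$. In particular $b\wedge y=b\wedge x$ for $b\in S$, so $f_{a,b}(y)=(y\wedge b)\vee a=(x\wedge b)\vee a=f_{a,b}(x)$ for every $(a,b)\in I$; hence $(x,y)\in N(u_I)=\Delta$ and $x=x\wedge w\le w$, proving $x$ is the least upper bound. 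For the infimum identity I would take a lower bound $w$ of $\{s\vee x\}$, set $y:=x\vee w$, and compute $f_{a,b}(y)=(y\vee a)\wedge b=(x\vee a)\wedge b=f_{a,b}(x)$ using $f_{a,b}=g_{a,b}$ for $a\le b$ (Proposition~\ref{d}).

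For sufficiency --- the decisive step --- I would assume $u$ Hausdorff and $(\ast)$, fix $x\ne y$, and manufacture one pair $(a',b)\in I$ with $f_{a',b}(x)\ne f_{a',b}(y)$. Since $x=\sup_s(s\wedge x)$ and $y=\sup_s(s\wedge y)$ (the supremum part of $(\ast)$), not all the meets can agree, so after possibly swapping $x$ and $y$ there is $b\in S$ with $b\wedge x\not\le y$. Feeding this into the infimum part of $(\ast)$ for $y$, I note that $b\wedge x\le a\vee y$ for every $a\in S$ would give $b\wedge x\le\inf_{a\in S}(a\vee y)=y$, a contradiction; so there is $a\in S$ with $b\wedge x\not\le a\vee y$. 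Setting $a':=a\wedge b\in S$, distributivity gives
\[f_{a',b}(y)=(y\wedge b)\vee(a\wedge b)=(y\vee a)\wedge b,\]
so $b\wedge x\not\le f_{a',b}(y)$, whereas $b\wedge x=x\wedge b\le f_{a',b}(x)$. Thus $f_{a',b}(x)\ne f_{a',b}(y)$ and $(x,y)\notin N(u_I)$, giving $N(u_I)=\Delta$.

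I expect the sufficiency direction to be the main obstacle: the two halves of $(\ast)$ are abstract order-approximation statements, and the real work is to fuse them into a single clamp that separates $x$ and $y$. The guiding picture is that the supremum part exposes a piece $b\wedge x$ of $x$ that escapes $y$, while the infimum part lets me slide the lower cut of $y$ beneath that piece without absorbing it; the identity $f_{a\wedge b,b}(y)=(y\vee a)\wedge b$ is exactly where distributivity of $L$ enters, and I anticipate this is the only essential use of distributivity in the argument.
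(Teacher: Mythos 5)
Your proof is correct and follows essentially the same route as the paper's: both reduce Hausdorffness of $u_I$ to $N(u_I)=\Delta$, obtain necessity by testing $N(u_I)$ against the pair $(x,\,x\wedge w)$ (resp.\ $(x,\,x\vee w)$) for an upper (resp.\ lower) bound $w$, and obtain sufficiency from the two halves of the density condition combined with the distributive identity $(y\wedge b)\vee(a\wedge b)=(y\vee a)\wedge b$. The only difference is presentational: you run the sufficiency step contrapositively, exhibiting a single separating clamp $f_{a\wedge b,\,b}$ when $x\ne y$, whereas the paper derives $x=y$ directly from the chain $x=\sup_{b}\inf_{a}(x\wedge b)\vee a=\sup_{b}\inf_{a}(y\wedge b)\vee a=y$.
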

\begin{proof}
Assume that $u_I$ is Hausdorff.  Then also $u$ is  Hausdorff since $u_I\subseteq u$. Let $x\in L$; we show that  $\sup_{s\in S} x\wedge s = x$.  Let $y\le x$ with $x\wedge s\le y$ for every $s\in S$.
Then for every $(a,b)\in I$ we have $x\wedge b=y\wedge b$, therefore $(x\wedge b)\vee a = (y\wedge b)\vee a$ and hence $(x,y)\in N(u_I)=\Delta$, i.e. $x=y$.  The proof of $\inf_{s\in S} x\vee s =x$ is similar.

 Conversely, suppose that $u$ is Hausdorff and that $x=\sup_{s\in S} s\wedge x=\inf_{s\in S}s\vee x$ for every $x\in L$.  Let $(x,y)\in N(u_I)$.  Then, since $u$ is Hausdorff, we get that   $(x\wedge b)\vee a=(y\wedge b)\vee a$ for every $(a,b)\in I$.  Then,
\[(x\wedge b)\vee a=(x\wedge(a\vee b))\vee a=(y\wedge(a\vee b))\vee a=(y\wedge b)\vee a,\]
for arbitrary $a$ and $b$ in $S$, and therefore
\[x=\sup_{b\in S} x\wedge b=\sup_{b\in S}\inf_{a\in S}(x\wedge b)\vee a=\sup_{b\in S}\inf_{a\in S}(y\wedge b)\vee a=\sup_{b\in S}y\wedge b=y.\]
This shows that $u_I$ is Hausdorff.
\end{proof}

\begin{cor}\label{c1} $u$ is Hausdorff iff $u^\ast$ is Hausdorff.
\end{cor}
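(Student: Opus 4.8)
The plan is to invoke Proposition \ref{pH} with the choice $S=L$. With this choice the index set becomes $I=J(S)=J(L)$, so that $u_I=u_{J(L)}=u^\ast$, and the proposition then directly compares the Hausdorffness of $u^\ast$ with that of $u$, subject only to the auxiliary order condition $x=\sup_{s\in S} s\wedge x=\inf_{s\in S}s\vee x$ for every $x\in L$.

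The one thing to check is that this auxiliary condition holds automatically once $S=L$. First I would observe that for any $x\in L$ the element $x$ itself belongs to the family $\{s\wedge x:s\in L\}$, since $x\wedge x=x$; as $s\wedge x\le x$ for every $s\in L$, the element $x$ is the largest member of this family, whence $\sup_{s\in L} s\wedge x=x$. Dually, $x=x\vee x$ lies in $\{s\vee x:s\in L\}$ and is its smallest member because $s\vee x\ge x$ for all $s$, so $\inf_{s\in L} s\vee x=x$. Thus the condition is satisfied unconditionally for $S=L$.

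With the auxiliary condition verified, Proposition \ref{pH} reduces to precisely the statement that $u^\ast$ is Hausdorff if and only if $u$ is Hausdorff, which is the claim. I expect no genuine obstacle: the corollary is simply the specialization of Proposition \ref{pH} to the full lattice, the point being that the supremum/infimum representation of $x$ that had to be assumed for a proper sublattice $S$ degenerates into a triviality when $S=L$.
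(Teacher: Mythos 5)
Your proposal is correct and matches the paper's intended derivation: Corollary \ref{c1} is stated immediately after Proposition \ref{pH} precisely as its specialization to $S=L$, where $u_{J(L)}=u^\ast$ by definition and the auxiliary condition $x=\sup_{s\in L}s\wedge x=\inf_{s\in L}s\vee x$ is trivial since $x\wedge x=x=x\vee x$. (The paper also remarks that the corollary follows alternatively from Corollary \ref{weakest}(ii) together with Proposition \ref{p2}, but your route is the primary one.)
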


Corollary \ref{c1} also follows from Corollary \ref{weakest} (ii) and Proposition \ref{p2}.

We now consider the situation when $u$ is the uniformity induced by the topology $\tau$ of a locally solid Riesz space, or more general, of a locally solid commutative $\ell$-group. We will compare the $u_J$-topology and $u^*$-topology, respectively, with $\mathfrak{u}_A\tau$ and  $\mathfrak{u}\tau$ introduced by Taylor \cite{T}. The following theorem was formulated by Taylor \cite[Theorem 2.3 and §9]{T} in the case that $\tau$ is a locally solid linear topology on a Riesz space, but in \cite{TaylorThesis} it is  mentioned a possible generalization for locally solid $\ell$-groups.

\begin{theo}\label{t}
Let $(G,+,\tau)$ be a Hausdorff locally solid commutative $\ell$-group, $\mathfrak{U}$ its $0$-neighbourhood system and $A$ a solid subgroup of $G$. Then the sets
$$\{x\in G: |x|\wedge a\in U\}\; (a\in A_+, U\in \mathfrak{U})$$
form a $0$-neighbourhood base for a locally solid group topology $\mathfrak{u}_A\tau$ on $G$.

A net $(x_\gamma)$ converges to $x$ w.r.t. $\mathfrak{u}_A\tau$ iff $|x_\gamma-x|\wedge a \mathop{\overset{\tau}{\longrightarrow}} 0$ for any $a\in A_+$.

If $(G,\tau)$ is a locally solid Riesz space, then $(G,\mathfrak{u}_A\tau)$ is a locally solid Riesz space, too.
\end{theo}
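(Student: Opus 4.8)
The plan is to exhibit the family $\mathcal{W}:=\{W_{a,U}:a\in A_+,\ U\in\mathfrak{U}\}$, where $W_{a,U}:=\{x\in G:|x|\wedge a\in U\}$, as a base of $0$-neighbourhoods and to verify the classical criterion for such a family to generate a commutative group topology: it should be a filter base of symmetric sets each containing $0$, and for every $W\in\mathcal{W}$ there should be $V\in\mathcal{W}$ with $V+V\subseteq W$. Since $\tau$ is locally solid, $\mathfrak{U}$ has a base of solid sets, so from the outset I would restrict attention to those $W_{a,U}$ with $U$ solid; this loses nothing (the restricted family is cofinal) and makes each $W_{a,U}$ solid, which simultaneously delivers the local solidity of the resulting topology.

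The verifications would run as follows. Solidity of $W_{a,U}$ for solid $U$: if $|x|\le|y|$ and $y\in W_{a,U}$, then $0\le|x|\wedge a\le|y|\wedge a\in U$, so $|x|\wedge a\in U$. Symmetry is automatic because $|{-x}|=|x|$. For the filter-base property, given $W_{a_1,U_1}$ and $W_{a_2,U_2}$ I would set $a:=a_1\vee a_2$ and pick solid $U\subseteq U_1\cap U_2$; here one uses that a solid subgroup $A$ of an $\ell$-group is automatically an $\ell$-subgroup, so $a\in A_+$, and solidity again gives $W_{a,U}\subseteq W_{a_1,U_1}\cap W_{a_2,U_2}$ via $|x|\wedge a_i\le|x|\wedge a\in U\subseteq U_i$. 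The decisive point is sub-additivity: given $W_{a,U}$, choose solid $U'\in\mathfrak{U}$ with $U'+U'\subseteq U$ and claim $W_{a,U'}+W_{a,U'}\subseteq W_{a,U}$. For $x,y\in W_{a,U'}$ one has $|x+y|\wedge a\le(|x|+|y|)\wedge a\le(|x|\wedge a)+(|y|\wedge a)\in U'+U'\subseteq U$, and solidity of $U$ concludes.

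The step I expect to be the main obstacle is the estimate $(u+v)\wedge a\le(u\wedge a)+(v\wedge a)$ for $u,v,a\in G_+$, which underpins sub-additivity. I would prove it by putting $t:=(u+v)\wedge a$ and computing $t-(v\wedge a)=(t-v)\vee(t-a)$; since $t\le u+v$ gives $t-v\le u$, while $t\le a$ with $v\ge0$ gives $t-v\le a$ and $t-a\le0\le u\wedge a$, one gets $(t-v)\vee(t-a)\le u\wedge a$, i.e. $t\le(u\wedge a)+(v\wedge a)$. (Alternatively this can be extracted from Lemma \ref{l5}.) Once this holds, the classical theorem on group-topology bases yields a unique locally solid group topology with base $\mathcal{W}$.

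The convergence statement is then immediate: convergence of $(x_\gamma)$ to $x$ means $x_\gamma-x\in W_{a,U}$ eventually for every $a\in A_+$ and $U\in\mathfrak{U}$, i.e. $|x_\gamma-x|\wedge a\in U$ eventually, which is precisely $|x_\gamma-x|\wedge a\to0$ with respect to $\tau$ for every $a\in A_+$. Finally, when $(G,\tau)$ is a locally solid Riesz space I would upgrade the group topology to a \emph{linear} one by checking that each solid $W_{a,U}$ is balanced and absorbing, then invoking the standard criterion that a filter base of balanced absorbing sets satisfying $V+V\subseteq W$ generates a vector topology. Balancedness is free from solidity, since $|\lambda x|=|\lambda|\,|x|\le|x|$ for $|\lambda|\le1$ forces $|\lambda x|\wedge a\le|x|\wedge a$; absorbency follows because, for fixed $x$, the $\tau$-neighbourhood $U$ absorbs $|x|$, so $t|x|\in U$ for all small $t\ge0$, whence $(t|x|)\wedge a\le t|x|\in U$ by solidity; and joint continuity of scalar multiplication at the origin is automatic from balancedness, as $\lambda W_{a,U}\subseteq W_{a,U}$ for $|\lambda|\le1$. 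Local solidity is retained because $\mathcal{W}$ consists of solid sets.
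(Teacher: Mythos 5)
Your proposal is correct, but note that the paper itself gives no proof of Theorem \ref{t}: it is quoted from Taylor \cite[Theorem 2.3 and \S 9]{T}, where it is proved for locally solid linear topologies on Riesz spaces, with the $\ell$-group version only ``mentioned as a possible generalization'' in \cite{TaylorThesis}. Your argument therefore supplies exactly the verification the paper leaves implicit, and it follows the standard route: reduce to solid $U\in\mathfrak{U}$, check that each $W_{a,U}$ is solid and symmetric, that the family is a filter base (here you correctly observe that a solid subgroup is closed under $a_1\vee a_2$ for $a_1,a_2\in A_+$ via $a_1\vee a_2\le a_1+a_2$), and establish $V+V\subseteq W$ from the Birkhoff inequality $(u+v)\wedge a\le(u\wedge a)+(v\wedge a)$ for $u,v,a\in G_+$, whose proof via $t-(v\wedge a)=(t-v)\vee(t-a)$ is valid in any commutative $\ell$-group. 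This inequality is the only genuinely lattice-theoretic input, and it plays the role that Lemma \ref{l5} plays elsewhere in the paper (the two are closely related: the equality in Lemma \ref{l5} decomposes $|x-y|\wedge a$ into two summands, which gives the same sub-additivity control). Two minor points you may wish to make explicit: first, after restricting to solid $U$ you should remark that the full family $\{W_{a,U}: a\in A_+,\,U\in\mathfrak{U}\}$ is still a neighbourhood base, since each $W_{a,U}$ contains $W_{a,U'}$ for any solid $U'\subseteq U$; second, in the Riesz space case the standard criterion you invoke (a filter base of balanced, absorbing sets with $V+V\subseteq W$ generates a linear topology) does require the continuity of $\lambda\mapsto\lambda x$ at $\lambda=0$ for fixed $x$, which is exactly your absorbency computation, so nothing is missing, but the phrase ``automatic from balancedness'' slightly understates what is being checked.
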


Following Taylor \cite{T} we write $\mathfrak{u}\tau$ instead of $\mathfrak{u}_A\tau$  if $A=G$.
Convergence w.r.t. $\mathfrak{u}\tau$ is called in \cite{T} \textit{unbounded $\tau$-convergence}. Moreover, in the terminology of Taylor \cite{T}, $\tau$ is called \textit{unbounded} iff $\mathfrak{u}\tau=\tau$.

\begin{theo}\label{cf}
Let $(G,+,\tau)$ be a Hausdorff locally solid commutative $\ell$-group and $A$ a solid subgroup of $G$.  Let $u$ be the uniformity induced by $\tau$ and  $J=\{(a,b)\in G^2: b-a\in A_+\}$. Then the $u_J$-topology coincides with $\mathfrak{u}_A\tau$. In particular, the $u^*$-topology coincides with $\mathfrak{u}\tau$.
\end{theo}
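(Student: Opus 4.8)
\emph{Proof proposal.} The plan is to prove that the two topologies have the same neighbourhood filter at every point $x\in G$, working directly at $x$; this avoids having to know beforehand that the $u_J$-topology is translation invariant, since $\mathfrak{u}_A\tau$ is a group topology and its filter at $x$ is just $x$ translated. First I would record convenient bases. Because $\tau$ is locally solid, the uniformity $u$ it induces has a base of entourages of the form $U_W:=\{(v,w)\in G^2:v-w\in W\}$ with $W$ a \emph{solid} $\tau$-neighbourhood of $0$ (solid and symmetric). Consequently a basic $u_J$-neighbourhood of $x$ is a finite intersection $\bigcap_{i=1}^{n}\{y:f_{a_i,b_i}(x)-f_{a_i,b_i}(y)\in W_i\}$ with $(a_i,b_i)\in J$ and each $W_i$ solid, while a basic $\mathfrak{u}_A\tau$-neighbourhood of $x$ is $\{y:|x-y|\wedge a\in W\}$ with $a\in A_+$ and $W$ solid (by Theorem \ref{t}, translated by $x$). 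The whole argument is then driven by Lemma \ref{l5}, which is exactly the dictionary between the quantities $f_{a,b}(x)-f_{a,b}(y)$ and $|x-y|\wedge a$.

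For the inclusion ``$u_J$-topology $\subseteq\mathfrak{u}_A\tau$'' I would start from a basic $u_J$-neighbourhood as above. Writing $c_i:=b_i-a_i\in A_+$, so that $f_{a_i,b_i}=f_{a_i,a_i+c_i}$, the left-hand inequality of Lemma \ref{l5} gives $|f_{a_i,b_i}(x)-f_{a_i,b_i}(y)|\le |x-y|\wedge c_i$ for every $i$. Since each $W_i$ is solid, the condition $|x-y|\wedge c_i\in W_i$ forces $f_{a_i,b_i}(x)-f_{a_i,b_i}(y)\in W_i$. Hence the $\mathfrak{u}_A\tau$-neighbourhood $\{y:|x-y|\wedge c_i\in W_i\ \text{for all }i\}$ (a finite intersection of basic ones, admissible because each $c_i\in A_+$) is contained in the given $u_J$-neighbourhood.

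For the reverse inclusion I would fix a basic $\mathfrak{u}_A\tau$-neighbourhood $\{y:|x-y|\wedge a\in W\}$, choose a solid $V$ with $V+V\subseteq W$, and apply the right-hand equality of Lemma \ref{l5} with base point $x$, namely
\[|x-y|\wedge a=|f_{x-a,x}(y)-f_{x-a,x}(x)|+|f_{x,x+a}(y)-f_{x,x+a}(x)|.\]
Here $(x-a,x)$ and $(x,x+a)$ both lie in $J$, since $a\in A_+$. On the $u_J$-neighbourhood $N:=\{y:f_{x-a,x}(x)-f_{x-a,x}(y)\in V\text{ and }f_{x,x+a}(x)-f_{x,x+a}(y)\in V\}$, solidity of $V$ turns each membership into $|f_{x-a,x}(y)-f_{x-a,x}(x)|\in V$ and $|f_{x,x+a}(y)-f_{x,x+a}(x)|\in V$; summing and using the displayed identity gives $|x-y|\wedge a\in V+V\subseteq W$. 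Thus $N$ is contained in the chosen $\mathfrak{u}_A\tau$-neighbourhood, and the two filters coincide, proving the $u_J$-topology equals $\mathfrak{u}_A\tau$.

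Finally, for the ``in particular'' clause I would note that for $A=G$ one has $A_+=G_+$, so $J=\{(a,b):b-a\in G_+\}=\{(a,b):a\le b\}=J(L)$, whence $u_J=u_{J(L)}=u^{*}$ and $\mathfrak{u}_G\tau=\mathfrak{u}\tau$; the already-proved equality then reads ``$u^{*}$-topology $=\mathfrak{u}\tau$''. The only genuine subtleties, and the place where local solidity of $\tau$ is indispensable, are the two solidity bookkeeping steps — passing between $|f_{a,b}(x)-f_{a,b}(y)|\le w\in W$ and $f_{a,b}(x)-f_{a,b}(y)\in W$, and absorbing the two-term splitting of Lemma \ref{l5} via the halving $V+V\subseteq W$; everything else is the direct translation supplied by that lemma.
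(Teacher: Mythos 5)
Your proposal is correct and follows essentially the same route as the paper: both directions are driven by Lemma \ref{l5}, with the left-hand inequality plus solidity giving that the $u_J$-topology is coarser than $\mathfrak{u}_A\tau$, and the two-term splitting at base points $x-a$, $x$, $x+a$ together with $V+V\subseteq W$ giving the converse. The handling of the ``in particular'' clause via $J=J(L)$ when $A=G$ is likewise the intended one.
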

\begin{proof}
Let $\mathfrak{U}$ be  a $0$-neighbourhood base for $(G,\tau)$ consisting of solid sets.
For $s,t \in L$ with $t-s \in A_{+}$ and $U\in \mathfrak U$ let
\begin{equation*}
	\hat{U}(x_{0},s,t) = \{ x\in G :f_{s,t}(x) - f_{s,t}(x_{0}) \in U \}.
\end{equation*}
Then $\{\hat{U}(x_0,s,t)\,:\,U\in\mathfrak U,\, t-s\in A_+\}$ is a neighbourhood subbase of $x_{0}$ w.r.t. the $u_J$-topology. We recall that the family $\{\tilde{U}(x_0,a)\,:\,U\in\mathfrak{U},\,a\in A_+\}$, where
\[\tilde{U}(x_0,a):=\{x\in G:|x-x_0|\wedge a\in U\},\]
is a neighbourhood base of $x_0$ w.r.t. $\mathfrak{u}_A\tau$.

Let $U\in \mathfrak U$ and $s,t\in G$ with $a:=t-s\in A_+$.  We show that $\tilde U(x_{0},a) \subseteq \hat{U}(x_{0},s,t)$. Let $x \in \tilde{U}(x_{0},a)$.  By {\rm(ii)} of Lemma \ref{l5}, one obtains
 \[\vert f_{s,t}(x) - f_{s,t} (x_{0}) \vert  \le |x-x_0|\wedge a\in U,\]
 i.e. $x\in \hat{U}(x_{0},s,t)$.  This shows that  the $u_J\mbox{-topology}$ is coarser than $\mathfrak{u}_A\tau$.

 Let us show the converse. Let $a\in A_+$ and $U\in\mathfrak U$. Choose $V\in\mathfrak U$ with $V+V\subseteq U$, and set $r:=x_0-a$, $s:= x_0$ and $t:= x_0+a$. We show that
$\hat{V}(x_0,s,t)\cap\hat{V}(x_0,r,s)\subseteq  \tilde{U}(x_0,a)$.
For $x\in \hat{V}(x_0,s,t)\cap\hat{V}(x_0,r,s)$ one has by Lemma \ref{l5}
 \[|x-x_0|\wedge a=|f_{r,s}(x)-f_{r,s}(x_0) |\,+\,|f_{s,t}(x)-f_{s,t}(x_0)|\in V+V\subseteq U,\]
 hence $x\in \tilde{U}(x_0,a)$.
 \end{proof}

Under the assumption of Theorem \ref{cf}, let $I:=\{(-a,a): a\in A_+\}$. Then obviously $u_I\subseteq u_J$. The following example shows that the $u_I$-topology can be strictly coarser the $u_J$-topology (=$\mathfrak{u}_A\tau$).

\begin{exa}\label{ex}
Let $G=\mathbb{R}^\mathbb{N}$ and $\tau$ be the locally solid group topology induced by $\Vert(x_n)\Vert:=\sum_{n=1}^\infty|x_n|$. Let $A:=\ell_\infty$, $I:=\{(-a,a): a\in A_+\}$ and $\sigma$ the $u_I$-topology.
Then $\sigma$ is strictly coarser than $\mathfrak{u}_A\tau$.
\end{exa}

\begin{proof}
Let $x=(1,2,3,\dots)$, $e=(1,1,1,\dots)$, $y_n=\frac{1}{n}e$, $x_n=x+y_n$.

We show that $x_n\rightarrow x$ ($\sigma$), but $x_n\nrightarrow x$ ($\mathfrak{u}_A\tau$).

Let $a\in A$, $k\in\mathbb{N}$ with $e\leq a\leq ke$. Then
$|(x_n\wedge a)\vee(-a)-(x\wedge a)\vee(-a)|\leq\frac{1}{n}\chi_{[1,k]}$ where $\chi_{[1,k]}$ denotes the characteristic function of $\{1,2,\dots,k\}$.
Therefore $\Vert(x_n\wedge a)\vee(-a)-(x\wedge a)\vee(-a)\Vert\leq\frac{k}{n}\rightarrow 0$ ($n\rightarrow\infty$).

On the other hand, $|x_n-x|\wedge a=y_n$, $\Vert y_n\Vert=+\infty$ and therefore $(x_n)$ does not converge to $x$ w.r.t. $\mathfrak{u}_A\tau$.
\end{proof}

%\begin{exmp}[$u$ is complete $\nRightarrow$ $u^\ast$ is complete] \textcolor[rgb]{0.00,0.07,1.00}{(Very sketchy - needs to be checked carefully and proper rewriting!)}
%Consider a semifinite measure space $(X,\Sigma,\mu)$ and let $L^p=L^p(X,\Sigma,\mu)$.  The H-uniformity  on $L^p$ (for $1\le p<+\infty$) associated with $\norm{p}$ is equal to the uniformity of convergence in measure (denoted by $u_\mu$) %induced by the set of Riesz pseudonorms  by $\{\rho_F:F\in\Sigma^{\mbox{f}}\}$, where $\rho_F:L^0\ni f=\rintegral{X}{|f|\wedge \chi_F}$.  So, $L^p$ with the H-uniformity induced by $\norm{p}$ is not complete unless $L^p=L^0$.
%\end{exmp}

\section{The uniformity $u^*$ on sublattices}

In this section, let $u$ be a lattice uniformity on a distributive lattice $L$.

Let $D(L)$ denote the set of semimetrics $d$ on $L$ satisfying
\begin{equation*}%\label{e1}
d(x\vee z,y\vee z)\le d(x,y)\quad\text{and}\quad d(x\wedge z, y\wedge z)\le d(x,y)\quad\text{for every $x,y,z\in L$.}
\end{equation*}

By Proposition \ref{semi}, $u$ is generated by a subset of $D(L)$, and  therefore by the set $D_u$ of all $d\in D(L)$ which are uniformly continuous w.r.t. $u$.

\begin{rem}\label{r1}
\emph{Let $J\subseteq J(L)$.  For every $(a,b)\in J$ and $d\in D(L)$ denote by $d_{a,b}$ the semimetric on $L$ defined by $d_{a,b}(x,y):=d(f_{a,b}(x),\,f_{a,b}(y))$.   It is easy to see then that the lattice uniformity $u_J$ is generated by the family $\{d_{a,b}:d\in D_u,\,(a,b)\in J\}$.}
\end{rem}

\begin{lem}\label{l2}
Let $d\in D(L)$.
\begin{enumerate}[{\rm(i)}]
\item Let $P_n:L^n\to L$ be given inductively by $P_1(x):=x$ and
\[P_n(x_1,\dots,x_n):=\begin{cases}
                        P_{n-1}(x_1,\dots,x_{n-1})\,\vee\,x_n, & \mbox{or } { } \\
                        P_{n-1}(x_1,\dots,x_{n-1})\,\wedge\,x_n . &
                      \end{cases}\]
                      Then $d(P_n(x_1,\dots,x_n),P_n(y_1,\dots,y_n))\le \sum_{i=1}^{n}d(x_i,y_i)$.
\item $d(f_{a,b}(x),f_{a,b}(y))\le d(f_{c,d}(x),f_{c,d}(y))+2d(a,c)+2d(b,d)$, for every $a,b,c,d,x,y\in L$.
\end{enumerate}
\end{lem}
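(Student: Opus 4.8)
The plan is to prove (i) by induction on $n$ and then to obtain (ii) as a direct consequence of (i) combined with the triangle inequality.

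For part (i), the base case $n=1$ is immediate since $P_1$ is the identity. For the inductive step, I would set $a:=P_{n-1}(x_1,\dots,x_{n-1})$ and $b:=P_{n-1}(y_1,\dots,y_{n-1})$ and treat the two alternatives in the definition of $P_n$ separately. In the $\vee$-case, insert an intermediate term and use the triangle inequality,
\[d(a\vee x_n,\,b\vee y_n)\le d(a\vee x_n,\,b\vee x_n)+d(b\vee x_n,\,b\vee y_n).\]
The first summand is at most $d(a,b)$ and the second at most $d(x_n,y_n)$, both by the defining inequalities of $D(L)$ (using the commutativity of $\vee$ in the second estimate). Applying the induction hypothesis to $d(a,b)$ then yields the bound $\sum_{i=1}^{n}d(x_i,y_i)$. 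The $\wedge$-case is identical, with $\vee$ replaced by $\wedge$ throughout.

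For part (ii), the key observation is that $f_{a,b}$ is itself an instance of the $P_n$ construction: since $f_{a,b}(x)=(x\wedge b)\vee a$, we have $f_{a,b}(x)=P_3(x,b,a)$ (choosing $\wedge$ at the second step and $\vee$ at the third). Applying part (i) with $n=3$ to the triples $(x,b,a)$ and $(x,d,c)$, and using $d(x,x)=0$, gives
\[d(f_{a,b}(x),f_{c,d}(x))\le d(x,x)+d(b,d)+d(a,c)=d(a,c)+d(b,d),\]
and, by the same reasoning, $d(f_{a,b}(y),f_{c,d}(y))\le d(a,c)+d(b,d)$. The claimed inequality then follows by inserting $f_{c,d}(x)$ and $f_{c,d}(y)$ and applying the triangle inequality twice.

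The argument is essentially routine; the only point requiring a little care is recognizing $f_{a,b}$ as the particular composite $P_3(\cdot,b,a)$, so that part (i) applies directly with the two varying arguments being exactly $(a,b)$ versus $(c,d)$. Everything else reduces to the triangle inequality together with the two contraction properties built into the definition of $D(L)$.
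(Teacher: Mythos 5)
Your proof is correct and follows essentially the same route as the paper: induction with the triangle inequality and the two contraction properties for part (i), and then the triangle inequality through $f_{c,d}(x)$ and $f_{c,d}(y)$ combined with part (i) (applied to $f_{a,b}=P_3(\cdot,b,a)$) for part (ii). Your explicit identification of $f_{a,b}$ as an instance of $P_3$ just makes visible a step the paper leaves implicit.
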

\begin{proof}
The proof of (i) is by induction.  The assertion is trivially true when $n=1$.  Suppose that
$d(P_{n-1}(x_1,\dots,x_{n-1}),P_{n-1}(y_1,\dots,y_{n-1}))\le \sum_{i=1}^{n-1}d(x_i,y_i)$.  Let $\diamond\in\{\wedge,\vee\}$.  Then,
\begin{align*}
  &d(P_n(x_1,\dots,x_n),P_n(y_1,\dots,y_n))\\
  &=d(P_{n-1}(x_1,\dots,x_{n-1})\diamond x_n,P_{n-1}(y_1,\dots,y_{n-1})\diamond y_n) \\
   & \le d(P_{n-1}(x_1,\dots,x_{n-1})\diamond x_n,P_{n-1}(y_1,\dots,y_{n-1})\diamond x_n) \\ &\qquad\qquad\qquad\qquad+d(P_{n-1}(y_1,\dots,y_{n-1})\diamond x_n,P_{n-1}(y_1,\dots,y_{n-1})\diamond y_n)\\
   &\le d(P_{n-1}(x_1,\dots,x_{n-1}),P_{n-1}(y_1,\dots,y_{n-1}))+d(x_n,y_n)\\
   &\le \sum_{i=1}^{n}d(x_i,y_i).
\end{align*}

To prove {\rm(ii)} one first writes
\[d(f_{a,b}(x),f_{a,b}(y))\le d(f_{a,b}(x),f_{c,d}(x))+d(f_{c,d}(x),f_{c,d}(y))+d(f_{c,d}(y),f_{a,b}(y)),\]
and then apply {\rm(i)} to obtain
\[d(f_{a,b}(x),f_{a,b}(y))\le d(f_{c,d}(x),f_{c,d}(y)) +2 d(a,c) +2d (b,d).\]
\end{proof}

\begin{prop}\label{barJ}
Let $\emptyset\neq J\subseteq J(L)$ and let $\tilde{J}:=\bar J\cap J(L)$, where $\bar J$ is  the closure of $J$ in  $(L\times L,u\times u)$. Then $u_J=u_{\tilde{J}}$.
\end{prop}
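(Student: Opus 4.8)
The plan is to prove the two inclusions $u_J\subseteq u_{\tilde J}$ and $u_{\tilde J}\subseteq u_J$ separately, the first being immediate and the second carrying all the content. For the trivial inclusion, note that $J\subseteq\bar J$ and $J\subseteq J(L)$ give $J\subseteq\tilde J$; hence the defining subbase $\{U_{a,b}:(a,b)\in J,\,U\in u\}$ of $u_J$ is a subfamily of the corresponding subbase of $u_{\tilde J}$, so $u_J\subseteq u_{\tilde J}$ (equivalently, $u_J$ is the supremum of the $u_{a,b}$ over the smaller index set $J$).

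For the reverse inclusion I would work entirely with the generating semimetrics supplied by Remark \ref{r1}: $u_{\tilde J}$ is generated by $\{\rho_{c,d}:\rho\in D_u,\ (c,d)\in\tilde J\}$, where $\rho_{c,d}(x,y)=\rho(f_{c,d}(x),f_{c,d}(y))$. It therefore suffices to fix $\rho\in D_u$, a pair $(c,d)\in\tilde J$ and $\varepsilon>0$, and to exhibit a basic $u_J$-entourage contained in $\{(x,y):\rho_{c,d}(x,y)<\varepsilon\}$. The crucial use of the hypothesis is that $(c,d)$ lies in the $u\times u$-closure of $J$: since $\rho$ is $u$-uniformly continuous, the set $\{(a',b'):\rho(a',c)<\delta,\ \rho(b',d)<\delta\}$ is the product of two $u$-neighbourhoods of $c$ and $d$, hence a $u\times u$-neighbourhood of $(c,d)$, and so it meets $J$ for every $\delta>0$. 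Choosing $\delta=\varepsilon/8$ produces some $(a,b)\in J$ with $\rho(a,c)<\varepsilon/8$ and $\rho(b,d)<\varepsilon/8$.

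The required comparison is then delivered by Lemma \ref{l2}(ii). Applying it with the roles of $(a,b)$ and $(c,d)$ interchanged, and using that $\rho$ is symmetric, I get
$$\rho(f_{c,d}(x),f_{c,d}(y))\le\rho(f_{a,b}(x),f_{a,b}(y))+2\rho(a,c)+2\rho(b,d)<\rho(f_{a,b}(x),f_{a,b}(y))+\tfrac{\varepsilon}{2}.$$
Since $(a,b)\in J$ and $\rho\in D_u$, the set $\{(x,y):\rho_{a,b}(x,y)<\varepsilon/2\}$ is a basic $u_J$-entourage (again by Remark \ref{r1}), and the displayed inequality shows it is contained in $\{(x,y):\rho_{c,d}(x,y)<\varepsilon\}$. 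As $\rho$, $(c,d)$ and $\varepsilon$ were arbitrary, this proves that every generating entourage of $u_{\tilde J}$ belongs to $u_J$, whence $u_{\tilde J}\subseteq u_J$ and finally $u_J=u_{\tilde J}$.

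I expect the only delicate point to be the correct reading of the closure $\bar J$ in the product uniformity $u\times u$ and its translation into the simultaneous smallness $\rho(a,c),\rho(b,d)<\delta$ for a single semimetric $\rho$; once this is in place, Lemma \ref{l2}(ii) does all the work, and no distributivity is needed beyond what is already assumed in the section (which is what makes $f_{a,b}$ a lattice homomorphism).
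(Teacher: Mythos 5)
Your proof is correct and follows essentially the same route as the paper's: both reduce to the generating semimetrics of Remark \ref{r1}, use the $u\times u$-density of $J$ in $\tilde J$ to pick an approximating pair $(a,b)\in J$, and conclude via Lemma \ref{l2}(ii); the only differences are cosmetic choices of constants. In fact you spell out one detail the paper leaves implicit, namely why $\{(a',b'):\rho(a',c)<\delta,\ \rho(b',d)<\delta\}$ is a $u\times u$-neighbourhood of $(c,d)$ and hence meets $J$.
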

\begin{proof}
 The inclusion $u_{J}\subseteq u_{\tilde J}$ follows by Proposition \ref{uj}{\rm(ii)}.  For the reverse inclusion, suppose that $W\in u_{\tilde J}$.  We show that there exists $U\in u_{ J}$ such that $U\subseteq W$.   In view of Remark \ref{r1} there exist $d\in D_u$, $(a,b)\in \tilde J$ and  $\varepsilon>0$, such that $(x,y)\in W$ for every $(x,y)\in L^2$ satisfying $d_{a,b}(x,y)<5\varepsilon$.  Let $(a',b')\in J$ with $d(a,a')<\varepsilon$ and $d(b,b')<\varepsilon$.   Then, $U_{a',b'}:=\{(x,y)\in L^2:d_{a',b'}(x,y)<\varepsilon\}\in u_{J}$ and by  Lemma \ref{l2}{\rm(ii)}
\[d_{a,b}(x,y)\le d_{a',b'}(x,y)+2d(a,a')+2d(b,b')<5\varepsilon,\]
for every  $(x,y)\in U_{a',b'}$.  This implies that $U_{a',b'}\subseteq W$.
\end{proof}

\begin{cor}\label{denseS}
Let $S$ be a dense sublattice of $(L,u)$ and $v=u|_S$. Then $\restr{u^*}{S}=v^*$.
\end{cor}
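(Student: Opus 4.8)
The plan is to deduce the result by lifting everything to $L$ and invoking Proposition \ref{barJ}. Write $v=u|_S$; since $S$ is a sublattice of the distributive lattice $L$ and the operations $\vee,\wedge$ restrict to $S$, $v$ is a lattice uniformity on the distributive lattice $S$, so $v^{*}=v_{J(S)}$ is defined. I would first record that the restriction to $S$ of the uniformity $u_{J(S)}$ on $L$ equals $v^{*}$: for $(a,b)\in J(S)$ one has $f_{a,b}(S)\subseteq S$, so for $x,y\in S$ and $U\in u$ the condition $(f_{a,b}(x),f_{a,b}(y))\in U$ is equivalent to $(f_{a,b}(x),f_{a,b}(y))\in U\cap S^{2}$. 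Hence the trace on $S^{2}$ of a subbasic entourage $U_{a,b}$ of $u_{J(S)}$ is exactly a subbasic entourage of $v_{J(S)}=v^{*}$, and conversely every $V\in v$ has the form $U\cap S^{2}$; comparing subbases yields $u_{J(S)}|_{S}=v^{*}$. This reduces the corollary to the single equality $u_{J(S)}=u_{J(L)}=u^{*}$ of uniformities on $L$, after which $u^{*}|_{S}=u_{J(L)}|_{S}=u_{J(S)}|_{S}=v^{*}$.

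For that equality I would apply Proposition \ref{barJ} with $J=J(S)$: it gives $u_{J(S)}=u_{\widetilde{J(S)}}$, where $\widetilde{J(S)}=\overline{J(S)}\cap J(L)$ and the closure is taken in $(L\times L,u\times u)$. Since $\widetilde{J(S)}\subseteq J(L)$ trivially, it suffices to prove $J(L)\subseteq\overline{J(S)}$, for then $\widetilde{J(S)}=J(L)$ and $u_{J(S)}=u_{J(L)}=u^{*}$. Fix $(a,b)\in J(L)$. As $u$ is generated by $D_{u}$ (Proposition \ref{semi}), to place $(a,b)$ in $\overline{J(S)}$ it is enough to produce, for every finite $F\subseteq D_{u}$ and every $\varepsilon>0$, a pair $(a',b')\in J(S)$ with $d(a,a')<\varepsilon$ and $d(b,b')<\varepsilon$ for all $d\in F$. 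By density of $S$ I choose $s,t\in S$ with $d(a,s)<\varepsilon/2$ and $d(b,t)<\varepsilon/2$ for every $d\in F$, and set $a'=s\wedge t$ and $b'=s\vee t$, so $(a',b')\in J(S)$. Because $a=a\wedge b$ and $b=a\vee b$, the contractivity inequalities defining $D(L)$ together with the triangle inequality give $d(a,a')=d(a\wedge b,s\wedge t)\le d(a,s)+d(b,t)<\varepsilon$ and likewise $d(b,b')=d(a\vee b,s\vee t)\le d(a,s)+d(b,t)<\varepsilon$, as required.

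The only genuine obstacle is that density by itself yields approximating points $s,t\in S$ with no control on their order, whereas membership in $J(S)$ requires an ordered pair. This is precisely what passing to the ordered hull $a'=s\wedge t\le s\vee t=b'$ overcomes: the lattice contractivity built into the semimetrics of $D(L)$ ensures that replacing $(s,t)$ by $(s\wedge t,s\vee t)$ cannot worsen the approximation of the already ordered pair $(a,b)$. Everything else reduces to the bookkeeping of subbases and the direct invocation of Proposition \ref{barJ}.
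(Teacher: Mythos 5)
Your proof is correct and follows essentially the same route as the paper: both reduce the corollary to Proposition \ref{barJ} applied to $J=J(S)$ by showing $J(L)\subseteq\tilde{J}$ (so that $u_{J(S)}=u_{J(L)}=u^*$), combined with the observation that $\restr{u_{J(S)}}{S}=v_{J(S)}=v^*$. The only cosmetic difference is that the paper verifies $J(L)\subseteq\bar{J}$ by taking nets $(x_\alpha),(y_\alpha)$ in $S$ converging to $a,b$ and using continuity of $\vee,\wedge$ on the ordered pairs $(x_\alpha\wedge y_\alpha,\,x_\alpha\vee y_\alpha)$, whereas you carry out the same ordered-hull trick quantitatively via the contractive semimetrics of Proposition \ref{semi}.
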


\begin{proof}
Let $J:=J(S)=\{(a,b)\in S^2: a\leq b\}$. First observe that $J(L)\subseteq \tilde{J}$: If $(a,b)\in J(L)$ and $(x_\alpha)$, $(y_\alpha)$ are nets in $S$ converging in $(L,u)$ to $a$ and $b$, respectively, then
$J\ni (x_\alpha\wedge y_\alpha,x_\alpha\vee y_\alpha)\rightarrow (a,b)$ ($u$), hence $(a,b)\in \tilde{J}$.

Applying Proposition \ref{barJ} we obtain $u^*=u_J$. Obviously $\restr{u_J}{S}=v_J$.

Combining, we get $\restr{u^*}{S}=\restr{u_J}{S}=v_J=v^\ast$.
\end{proof}

We now use besides Corollary \ref{denseS} the following general observation:

\begin{lem}[\cite{Weber1982}, pg. 381]\label{general}
Let $Y$ be a dense subspace of a uniform space $(X,v)$.
\begin{enumerate}[{\rm(i)}]
\item Then $w\mapsto w|_Y$ defines a lattice isomorphism from the lattice of all uniformities on $X$ coarser than $v$ onto the lattice of all uniformities on $Y$ coarser than $v|_Y$.
\item  Let $w$ be a uniformity on $X$ coarser than $v$ and $y_0\in Y$. Then
$$\{\bar{W}^v: W \text{ is a neighbourhood of }y_0 \text{ in }(Y,w|_Y) \}$$
is a neighbourhood base of $y_0$ in $(X,w)$; here $\bar{W}^v$ denotes the closure of $W$ in $(X,v)$.
\end{enumerate}
\end{lem}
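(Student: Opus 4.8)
The plan is to prove part (ii) first, since its closure estimates are exactly what is needed to build the inverse of the restriction map in part (i). The one ingredient I would isolate at the outset is the elementary uniform-space inequality
\[\overline{W}^{\,v}\ \subseteq\ V\circ W\circ V\qquad(W\subseteq X\times X,\ V\in v\ \text{symmetric}),\]
where $\overline{\phantom{W}}^{\,v}$ denotes closure in $(X\times X,v\times v)$; it holds because any pair in the closure of $W$ is $(V\times V)$-close to an honest pair of $W$. Density of $Y$ enters only through the remark that for symmetric $V\in v$ and any $x\in X$ the slice $V[x]$ meets $Y$.

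For (ii), fix $w\subseteq v$ and $y_0\in Y$; the basic neighbourhoods of $y_0$ in $(Y,w|_Y)$ are the sets $W[y_0]\cap Y$ with $W\in w$. To see that each $\overline{W[y_0]\cap Y}^{\,v}$ is a $w$-neighbourhood of $y_0$, I would choose symmetric $W'\in w$ with $W'\circ W'\subseteq W$ and verify $W'[y_0]\subseteq\overline{W[y_0]\cap Y}^{\,v}$: given $z\in W'[y_0]$ and symmetric $V\in v$ with $V\subseteq W'$, density yields $y\in Y\cap V[z]$, and then $(y_0,z)\in W'$ together with $(z,y)\in V\subseteq W'$ force $y\in W[y_0]\cap Y\cap V[z]$. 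This is the one genuine use of density. Conversely, for $W\in w$ a symmetric $W'\in w$ with $W'\circ W'\subseteq W$ gives $\overline{W'[y_0]\cap Y}^{\,v}\subseteq\overline{W'[y_0]}^{\,w}\subseteq(W'\circ W')[y_0]\subseteq W[y_0]$, using that $v$ is finer than $w$ (so the $v$-closure lies inside the $w$-closure) and the standard bound $\overline{E[y_0]}^{\,w}\subseteq(E\circ E)[y_0]$ for symmetric $E\in w$. The two inclusions together give the claimed base.

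For (i), the restriction $\Phi:w\mapsto w|_Y$ is visibly order preserving and lands among the uniformities coarser than $v|_Y$; the content is that it is a bijection with order-preserving inverse. I would construct the inverse $\Psi$ explicitly: for $w'\subseteq v|_Y$, let $\Psi(w')$ be the filter on $X\times X$ generated by the $v$-closures $\overline{W'}^{\,v}$, $W'\in w'$. Using the isolated inequality together with density I would check $\Psi(w')$ is a uniformity with $\Psi(w')\subseteq v$: it contains $\Delta_X$ since $\Delta_Y$ is dense in $\Delta_X$; it is symmetric on symmetric generators; each $\overline{W'}^{\,v}$ is a genuine $v$-entourage (writing $W'\supseteq V^{*}\cap(Y\times Y)$ and picking symmetric $V\in v$ with $V\circ V\circ V\subseteq V^{*}$, density gives $V\subseteq\overline{W'}^{\,v}$); and composability follows from $\overline{W_1'}^{\,v}\circ\overline{W_1'}^{\,v}\subseteq\overline{W'}^{\,v}$ whenever $W_1'\circ W_1'\circ W_1'\subseteq W'$, which I would prove by pushing the two closures back to honest pairs of $W_1'$ via the inequality and splicing them with a $v|_Y$-small bridging pair that lies in $W_1'$.

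Finally I would check the two composites are identities. For $\Phi\Psi=\mathrm{id}$ it suffices to sandwich $W'\subseteq\overline{W'}^{\,v}\cap(Y\times Y)$ (clear) and $\overline{W_2'}^{\,v}\cap(Y\times Y)\subseteq W'$ for $W_2'$ with $W_2'{}\circ W_2'\circ W_2'\subseteq W'$ (again via the inequality and a $v|_Y$-bridge). For $\Psi\Phi=\mathrm{id}$ one sandwiches $W_1\subseteq\overline{W\cap(Y\times Y)}^{\,v}\subseteq W$, where for $W_1{}\circ W_1\circ W_1\subseteq W$ the left inclusion is pure density and the right one is $\overline{W\cap(Y\times Y)}^{\,v}\subseteq\overline{W_1}^{\,v}\subseteq W_1\circ W_1\circ W_1\subseteq W$ on taking $V=W_1$ in the inequality; upward closure of filters then yields $\Psi\Phi(w)=w$. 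A bijection that is order preserving with order-preserving inverse is an order isomorphism, and between the (complete) lattices of uniformities bounded above by $v$ and by $v|_Y$ it therefore preserves all infima and suprema, proving (i). I expect the \textbf{main obstacle} to be the composability of $\Psi(w')$: forcing the two closures to meet requires a bridging pair lying simultaneously in $Y\times Y$ and in $W_1'$, and it is exactly here that the hypothesis $w'\subseteq v|_Y$ (rather than an arbitrary uniformity on $Y$) is indispensable.
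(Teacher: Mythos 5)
The paper does not prove this lemma at all: it is imported verbatim from Weber's 1982 paper (the citation \cite{Weber1982}, pg.~381, is the whole ``proof''), so there is nothing internal to compare your argument against. Judged on its own, your proposal is the standard argument and is essentially correct. The two pillars you isolate are exactly the right ones: the identity $\overline{W}^{\,v}=\bigcap_{V}V\circ W\circ V$ for subsets of $X\times X$, and the observation that the bridging pair needed for composability of $\Psi(w')$ must lie in $Y\times Y$ \emph{and} in $W_1'$, which is precisely where the hypothesis $w'\subseteq v|_Y$ is used; your treatment of (ii) and of $\Phi\Psi=\mathrm{id}$ is correct as written. The only blemish is an index slip in the $\Psi\Phi=\mathrm{id}$ step: the literal sandwich $W_1\subseteq\overline{W\cap(Y\times Y)}^{\,v}\subseteq W$ is not what your own inequality delivers, since $\overline{W\cap(Y\times Y)}^{\,v}\subseteq\overline{W_1}^{\,v}$ fails ($W\cap(Y\times Y)$ is not contained in $W_1$). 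The two refinements should be run on different generators: the density inclusion $W_1\subseteq\overline{W\cap(Y\times Y)}^{\,v}$ shows $\Psi\Phi(w)\subseteq w$, while $\overline{W_1\cap(Y\times Y)}^{\,v}\subseteq\overline{W_1}^{\,v}\subseteq W_1\circ W_1\circ W_1\subseteq W$ (taking $V=W_1$, legitimate because $w\subseteq v$) shows $w\subseteq\Psi\Phi(w)$. With that relabelling the proof is complete.
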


\begin{theo}\label{dense*}
Let $S$ be a dense sublattice of $(L,u)$ and $v=u|_S$.
\begin{enumerate}[{\rm(i)}]
\item  Then $u^*=u$ iff $v^*=v$.
\item  If the $u^*$-topology coincides with the $u$-topology, then $v^*$-topology coincides with the $v$-topology.
\end{enumerate}
\end{theo}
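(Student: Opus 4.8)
The plan is to read off both parts from Corollary \ref{denseS}, which identifies $\restr{u^*}{S}$ with $v^*$, combined with the trace-uniformity machinery of Lemma \ref{general} and the elementary behaviour of subspace uniformities.

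For (i), I would first observe that both $u$ and $u^*$ belong to the lattice of uniformities on $L$ that are coarser than $u$: indeed $u^*=u_{J(L)}\subseteq u$ by Proposition \ref{uj}(i), while $u\subseteq u$ trivially. By Lemma \ref{general}(i) the assignment $w\mapsto\restr{w}{S}$ is a lattice isomorphism---in particular injective---from this lattice onto the lattice of uniformities on $S$ coarser than $\restr{u}{S}=v$. Consequently $u^*=u$ holds if and only if the two traces coincide, i.e. $\restr{u^*}{S}=\restr{u}{S}$. Since $\restr{u}{S}=v$ by definition and $\restr{u^*}{S}=v^*$ by Corollary \ref{denseS}, this last equality reads $v^*=v$, which is precisely the asserted equivalence.

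For (ii), I would invoke the standard fact that for any uniformity $w$ on $L$ the trace $\restr{w}{S}$ induces on $S$ exactly the subspace topology of the $w$-topology. Applied to $w=u$, this says that the $v$-topology (being the $\restr{u}{S}$-topology) is the subspace topology inherited from the $u$-topology; applied to $w=u^*$, together with $\restr{u^*}{S}=v^*$ from Corollary \ref{denseS}, it says that the $v^*$-topology is the subspace topology inherited from the $u^*$-topology. Hence, if the $u^*$-topology coincides with the $u$-topology on $L$, the two induced subspace topologies on $S$ are identical, so the $v^*$-topology coincides with the $v$-topology.

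I do not anticipate a real obstacle, since the substantive content is already packaged in Corollary \ref{denseS} (via Proposition \ref{barJ}) and in Lemma \ref{general}; the only points needing care are the correct use of the injectivity of the trace map in (i) and, in (ii), the passage from a uniformity to the subspace topology it induces. It is worth flagging why (ii) is stated in one direction only: the clean lattice isomorphism of Lemma \ref{general}(i) is a statement about \emph{uniformities}, and there is no comparable correspondence forcing equality of subspace topologies on a dense $S$ to lift back to equality of the topologies on all of $L$; accordingly the converse of (ii) is neither available by mere restriction nor claimed.
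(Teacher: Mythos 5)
Your proposal is correct and follows essentially the same route as the paper: part (i) via the injectivity of the restriction map from Lemma \ref{general}(i) together with the identification $\restr{u^*}{S}=v^*$ from Corollary \ref{denseS}, and part (ii) via the fact that the trace uniformity induces the subspace topology. The paper's own proof is just a more compressed version of exactly these steps.
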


\begin{proof}
{\rm(i)}~By Lemma \ref{general}, $u^*=u$ iff $u^*|_S=u|_S$. Now observe that $u|_S=v$ by definition and $u^*|_S=v^*$ by Corollary \ref{denseS}.

{\rm(ii)}~If the $u^*$-topology coincides with the $u$-topology, then the $u^*|_S$-topology coincides with the $u|_S$-topology, i.e in view of Corollary \ref{denseS}, the $v^*$-topology coincides with the $v$-topology.
\end{proof}

We don't know whether in {\rm(ii) of Theorem \ref{dense*} the converse implication is true. Lemma \ref{general} {\rm(ii)} and Corollary \ref{denseS} only imply that every $s\in S$ has the same neighbourhood system in $(L,u^*)$ and in $(L,u)$ if $v^*$-topology coincides with the $v$-topology. But this is enough to answer the question of Taylor (see [T19, Question 3.3]) whether the property `unbounded' passes from the Hausdorff locally solid Riesz space $(X,\tau)$ to its completion $(\tilde{X},\tilde \tau)$.

%\textcolor{red}{ho cancellato the hidden example}

%We remark that we don't know if  the converse is true in general.  The following example exhibits a uniform lattice $(L,u)$, a dense subset sublattice S and a uniformity $v\subseteq u$ such that $v$-topology is strictly coarser than $u$-topology and  satisfying $\restr{v\mbox{-topology}}{S}=\restr{u\mbox{-topology}}{S}$.

%\begin{ex}
 % Let $L:=[-1,1]$ with the usual uniformity $v$ induced by the metric $d(x,y)=|x-y|$.  Let $f(x):=x$ $(-1\le x \le 0)$ and $f(x):=x+1$ for $0< x\le 1$.  Let $u$ be the uniformity on $L$ induced by the metric $d_f(x,y):=|f(x)-f(y)|$.  Let $S:=[-1,0)\cup(0,1]$.  Then $v\subseteq u$, $S$ is dense in $L$,  $\restr{v\mbox{-topology}}{S}=\restr{u\mbox{-topology}}{S}$ but $v$-topology is not equal to $u$-topology.
%\end{ex}

\begin{theo}\label{densegroup}
Let $H$ be a dense solid subgroup of a locally solid commutative $\ell$-group $(G,\tau)$ and let $\sigma=\tau|_H$. Then $\mathfrak{u}\tau=\tau$ iff $\mathfrak{u}\sigma=\sigma$.
\end{theo}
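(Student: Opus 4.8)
The plan is to reduce the statement about the locally solid $\ell$-group topology $\tau$ to a statement about the induced lattice uniformity $u$ and then invoke Theorem~\ref{dense*}. Let $u$ be the uniformity induced by $\tau$ on $G$, and let $v=u|_H$ be the uniformity induced by $\sigma=\tau|_H$ on $H$. The key translation is Theorem~\ref{cf}: the $u^*$-topology coincides with $\mathfrak{u}\tau$ on $G$, and likewise the $v^*$-topology coincides with $\mathfrak{u}\sigma$ on $H$. Hence $\mathfrak{u}\tau=\tau$ is equivalent to saying that the $u^*$-topology coincides with the $u$-topology, and $\mathfrak{u}\sigma=\sigma$ is equivalent to saying that the $v^*$-topology coincides with the $v$-topology.

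First I would check that $H$, being a dense \emph{solid} subgroup, is in particular a dense \emph{sublattice} of $(G,u)$ in the sense required by Theorem~\ref{dense*}: solidity gives that $H$ is closed under $\vee$ and $\wedge$ (for $a,b\in H$ we have $|a\vee b|,|a\wedge b|\le |a|+|b|$, so both lie in $H$), and density of $H$ as a subgroup in the topology $\tau$ is exactly density as a subspace of the uniform space $(G,u)$. Thus the hypotheses of Theorem~\ref{dense*} are satisfied with $L=G$, $S=H$.

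Now the two implications split according to the direction. For the implication $\mathfrak{u}\tau=\tau\ \Rightarrow\ \mathfrak{u}\sigma=\sigma$, I would argue that $\mathfrak{u}\tau=\tau$ forces the \emph{uniformities} $u^*$ and $u$ to coincide, not merely their topologies. Indeed, on a locally solid $\ell$-group the topology determines the (two-sided) uniformity, so equality of the $\mathfrak{u}\tau$-topology and the $\tau$-topology upgrades to equality $u^*=u$ of the associated lattice uniformities; then Theorem~\ref{dense*}(i) yields $v^*=v$, whence $\mathfrak{u}\sigma=\sigma$ by Theorem~\ref{cf} applied to $H$. For the converse $\mathfrak{u}\sigma=\sigma\ \Rightarrow\ \mathfrak{u}\tau=\tau$, the same uniformity-versus-topology identification gives $v^*=v$, and Theorem~\ref{dense*}(i) returns $u^*=u$, hence $\mathfrak{u}\tau=\tau$.

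The main obstacle is the passage between \emph{topologies} and \emph{uniformities}: Theorem~\ref{dense*}(i) is stated for equality of uniformities ($u^*=u$ iff $v^*=v$), whereas the hypothesis $\mathfrak{u}\tau=\tau$ and Theorem~\ref{cf} naturally give only equality of the associated \emph{topologies}. The crucial point I must justify is that for the uniformities arising from locally solid group topologies this distinction collapses: a locally solid group topology determines its two-sided uniformity, so the $u^*$-topology equals the $u$-topology if and only if $u^*=u$ (both being group uniformities compatible with their respective group topologies, and $u^*\subseteq u$ always). Once this identification is in place the argument is a clean concatenation of Theorem~\ref{cf} and Theorem~\ref{dense*}(i); I would take care to note that it is exactly this identification that lets me use the \emph{uniformity} statement of Theorem~\ref{dense*}(i) rather than the weaker topological statement of part~(ii), thereby obtaining the full biconditional and not just one direction.
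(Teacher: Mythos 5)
There is a genuine gap at the step you yourself flag as the crux: the claim that ``$\mathfrak{u}\tau=\tau$ forces $u^*=u$'' because a locally solid group topology determines its two-sided uniformity. That principle applies to the \emph{group} uniformity canonically attached to a group topology, but $u^*$ is not that object: it is the initial \emph{lattice} uniformity of the maps $f_{a,b}$, and Theorem~\ref{cf} only identifies its \emph{topology} with $\mathfrak{u}\tau$. In general $u^*$ is not translation-invariant and is strictly coarser than $u$ even when the two topologies agree. The paper's own Example~\ref{ex-r} and the Remark following it give a counterexample inside the exact setting of Theorem~\ref{densegroup}: for $G=\mathbb{R}$ with the usual topology one has $\mathfrak{u}\tau=\tau$, yet $u$ is complete while $u^*$ is not (its completion is $[-\infty,+\infty]$), so $u^*\neq u$. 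Hence the proposed reduction to the uniformity statement of Theorem~\ref{dense*}(i) is not available, and as written neither implication of your argument is established.

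The paper's proof handles the two directions asymmetrically precisely because of this. For $\mathfrak{u}\tau=\tau\Rightarrow\mathfrak{u}\sigma=\sigma$ it only needs equality of topologies, so the topological statement Theorem~\ref{dense*}(ii) suffices. For the converse it cannot use Theorem~\ref{dense*}(ii) (the paper explicitly says it does not know whether that implication reverses for general uniform lattices); instead it uses Corollary~\ref{denseS} together with Lemma~\ref{general}(ii) to conclude that every point of $H$ --- in particular $0$ --- has the same neighbourhood filter in $(G,\mathfrak{u}\tau)$ as in $(G,\tau)$, and then invokes the fact that both are \emph{group} topologies, hence determined by their neighbourhoods of $0$. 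If you want to salvage your write-up, replace the ``upgrade to uniformities'' step by this two-pronged argument: the group structure enters only in the converse direction, and only through homogeneity of group topologies, not through any identification of $u^*$ with a group uniformity.
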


\begin{proof}
Let $u$ and $v$ be the uniformities induced, respectively, by $\tau$ and $\sigma$. We use that the $u^*$-topology and the $v^*$-topology coincide, respectively, with $\mathfrak{u}\tau$ and $\mathfrak{u}\sigma$ (see Theorem \ref{cf}) and therefore $\mathfrak{u}\tau|_H=\mathfrak{u}\sigma$ (see Corollary \ref{denseS}).

If $\mathfrak{u}\tau=\tau$, then $\mathfrak{u}\sigma=\sigma$ by Theorem \ref{dense*} (b). Vice versa, if $\mathfrak{u}\sigma=\sigma$, then $\mathfrak{u}\tau$ and $\tau$ have the same $0$-neighbourhood system by Lemma \ref{general}(ii). Therefore  $\mathfrak{u}\tau=\tau$ since both, $\mathfrak{u}\tau$ and $\tau$, are group topologies.
\end{proof}

The following Theorem generalizes \cite[Proposition 2.12]{T} and answers \cite[Question 2.13]{T}.

\begin{theo}\label{t4}
Let $S$ be a sublattice of the uniform lattice $(L,u)$.  Then $S$ is (sequentially) closed w.r.t. $u^\ast$ iff it is (sequentially) closed w.r.t. $u$.
\end{theo}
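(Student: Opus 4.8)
The plan is to prove the two directions separately, observing that one is immediate while the other carries all the content. Since $u^\ast\subseteq u$, the $u^\ast$-topology is coarser than the $u$-topology; hence every $u^\ast$-closed set is automatically $u$-closed, and likewise a $u$-convergent sequence is $u^\ast$-convergent to the same limit, so sequential $u^\ast$-closedness forces sequential $u$-closedness. This disposes of the ``only if'' direction in both the topological and the sequential versions with no further work.

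For the substantive ``if'' direction I would assume $S$ is ($u$- resp.\ sequentially $u$-) closed, discard the trivial case $S=\emptyset$, and fix some $s_0\in S$. I would first record the auxiliary fact that, whenever $x$ lies in the $u^\ast$-closure of $S$,
\[
f_{p,q}(x)\in S\qquad\text{for every }(p,q)\in J(S).
\]
Indeed $f_{p,q}$ maps $S$ into $S$ (as $S$ is a sublattice containing $p,q$), while reading $\{y:(f_{p,q}(x),f_{p,q}(y))\in U\}$ as a subbasic $u^\ast$-neighbourhood of $x$ produces, for each $U\in u$, an $s\in S$ with $f_{p,q}(s)\in S$ that is $U$-close to $f_{p,q}(x)$; since $S$ is $u$-closed this forces $f_{p,q}(x)\in\overline{S}^{\,u}=S$ (in the sequential case one uses instead that $f_{p,q}(x_n)\to f_{p,q}(x)$ in $u$ with $f_{p,q}(x_n)\in S$). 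The crux is then the distributive-lattice identity
\[
f_{s_0\wedge x,\;s_0\vee x}(s)\;=\;f_{s\wedge s_0,\;s\vee s_0}(x)\qquad(s\in S),
\]
both sides expanding to $(s\wedge s_0)\vee(s\wedge x)\vee(s_0\wedge x)$. Its significance is that the left-hand expression, whose witnessing interval $[s_0\wedge x,\,s_0\vee x]$ involves the \emph{unknown} point $x$, is secretly equal to $f_{p,q}(x)$ with $p:=s\wedge s_0,\ q:=s\vee s_0\in S$, and therefore lands in $S$ by the auxiliary fact.

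With these ingredients the conclusion is quick. Take $x$ in the $u^\ast$-closure of $S$ and consider $(a,b):=(s_0\wedge x,\,s_0\vee x)\in J(L)$, for which $f_{a,b}(x)=x$. For the topological statement, each subbasic neighbourhood $\{y:(x,f_{a,b}(y))\in U\}$ meets $S$, say in $s$; then the identity gives $f_{a,b}(s)=f_{s\wedge s_0,\,s\vee s_0}(x)\in S$, an element of $S$ lying $U$-close to $x$, so $x\in\overline{S}^{\,u}=S$. For the sequential statement one argues identically with a sequence $(x_n)\subseteq S$ converging to $x$ in $u^\ast$: here $f_{a,b}(x_n)=f_{x_n\wedge s_0,\,x_n\vee s_0}(x)\in S$ by the identity together with the auxiliary fact, while $f_{a,b}(x_n)\to f_{a,b}(x)=x$ in $u$, whence $x\in S$ by sequential $u$-closedness.

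The only genuine obstacle is locating the displayed identity; once it is in hand everything reduces to the elementary observations that $f_{a,b}$ preserves $S$ when $a,b\in S$ and that $u$-closedness upgrades the $u$-limits of such images to membership in $S$. Distributivity of $L$ (the standing hypothesis of the section) is precisely what validates the identity, so no extra assumptions on $S$ or on $u$ are needed.
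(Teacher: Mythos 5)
Your proof is correct, and it takes a genuinely different route from the paper's. The paper proves the transfer of $\kappa$-closedness for every infinite cardinal $\kappa$ (which covers the topological and sequential cases at once) by a two-stage bootstrap: from a net $(s_\alpha)$ in $S$ that is $u^\ast$-convergent to $x$ it first extracts, for each $s\in S$, the $u$-convergent net $\bigl((x\vee s)\wedge(s_\alpha\vee s)\bigr)_\alpha\to x\vee s$ to conclude $x\vee s\in S$, then dually $x\wedge s'\in S$, and finally writes $x=x\wedge(x\vee s)$; closedness w.r.t.\ $u$ is invoked at each stage. Your argument short-circuits this by means of the median identity $f_{s_0\wedge x,\,s_0\vee x}(s)=f_{s\wedge s_0,\,s\vee s_0}(x)$: since $f_{s_0\wedge x,\,s_0\vee x}(x)=x$, the images $f_{s_0\wedge x,\,s_0\vee x}(s_\alpha)$ form a family in $S$ (by your auxiliary fact, itself a one-line consequence of $f_{p,q}(S)\subseteq S$ and the uniform continuity of $f_{p,q}:(L,u^\ast)\to(L,u)$ for $(p,q)\in J(S)$) that $u$-converges \emph{directly} to $x$, so a single application of $u$-closedness finishes the job. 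Both proofs rest on the same ingredients — distributivity, $f_{p,q}(S)\subseteq S$, and the definition of $u^\ast$ as an initial uniformity — and both preserve the index set of the approximating family, which is why the sequential version comes for free; the paper's formulation makes this explicit through the $\kappa$-closedness device and thereby records a marginally more general statement, while yours is shorter and makes the mechanism (elements of $S$ of the form $f_{p,q}(x)$ with $p,q\in S$ converging to $x$ in $u$) more transparent.
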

\begin{proof}
For an infinite cardinal $\kappa$ let us say that a subset $U$ of a topological space is $\kappa$-closed when $U$ contains the limit of every convergent net $(x_\alpha)_{\alpha\in A}$ where $|A|\le \kappa$,  that is contained in $U$.  We show that $S$ is $\kappa$-closed w.r.t. $u^\ast$ iff it is $\kappa$-closed w.r.t. $u$.  Since $u^\ast\subseteq u$, clearly every $\kappa$-closed subset w.r.t. $u^\ast$ is $\kappa$-closed w.r.t. $u$.  For the converse, assume that $S$ is $\kappa$-closed w.r.t. $u$ and  suppose that $(s_\alpha)_{\alpha\in A}$ is  a net in $S$ such that $|A|\le \kappa$ and convergent to $x$ w.r.t. $u^\ast$.  For every $s\in S$ and $z\in L$ satisfying $z\ge s$ we observe that
$(s_\alpha\vee s)\wedge z\to(x\vee s)\wedge z$ w.r.t. $u$.  In particular, if we fix $\alpha'\in A$ and set $z:=s_{\alpha'}\vee s$, we obtain -- by the hypothesis that $S$ is $\kappa$-closed w.r.t. $u$ -- that $(x\vee s)\wedge (s_{\alpha'}\vee s)\in S$.  Since $\alpha'$ was arbitrary, and since $s_\alpha\to x$ w.r.t. $u^\ast$, this in turn implies that
\[(x\vee s)\wedge (s_\alpha\vee s)\to (x\vee s)\wedge (x\vee s)=x\vee s\qquad\text{w.r.t. $u$}.\]
Using again the fact that $S$ is $\kappa$-closed w.r.t. $u$  we deduce that

{\rm(i)} $x\vee s\in S$ for every $s\in S$.

{\rm(ii)} Dually one obtains that $x\wedge s'\in S$ for every $s'\in S$.

Let now $s\in S$. Then $s':=x\vee s\in S$ by {\rm(i)} and $x=x\wedge s'\in S$ by {\rm(ii)}.
\end{proof}

\begin{cor}\label{c2}
Let $S$ be a sublattice of  the uniform lattice $(L,u)$.  Then the closures of $S$ in $L$ w.r.t. $u$ and $u^\ast$ coincide.
\end{cor}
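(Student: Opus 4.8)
The plan is to deduce this directly from Theorem \ref{t4}, using the single extra fact that the $u$-closure of a sublattice is again a sublattice. Write $\overline{S}^{\,u}$ and $\overline{S}^{\,u^\ast}$ for the closures of $S$ with respect to the $u$-topology and the $u^\ast$-topology, respectively. Since $u^\ast\subseteq u$, the $u^\ast$-topology is coarser than the $u$-topology, hence it has fewer closed sets; consequently the $u^\ast$-closure, being an intersection over a smaller family of closed sets, is at least as large, giving the inclusion $\overline{S}^{\,u}\subseteq\overline{S}^{\,u^\ast}$ at no cost.

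The substance lies in the reverse inclusion. First I would observe that $\overline{S}^{\,u}$ is itself a sublattice of $L$: because $u$ is a lattice uniformity, the operations $\vee$ and $\wedge$ are continuous for the $u$-topology, so if $x,y\in\overline{S}^{\,u}$, choosing nets in $S$ converging to $x$ and to $y$ and applying continuity of $\vee$ and $\wedge$ shows that $x\vee y$ and $x\wedge y$ again lie in $\overline{S}^{\,u}$. Thus $\overline{S}^{\,u}$ is a $u$-closed sublattice of $(L,u)$.

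Now I would invoke Theorem \ref{t4} applied to the sublattice $\overline{S}^{\,u}$: since it is closed with respect to $u$, it is also closed with respect to $u^\ast$. Being a $u^\ast$-closed set that contains $S$, it must contain the $u^\ast$-closure of $S$, i.e. $\overline{S}^{\,u^\ast}\subseteq\overline{S}^{\,u}$. Combining this with the inclusion from the first paragraph yields $\overline{S}^{\,u^\ast}=\overline{S}^{\,u}$, as required.

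I do not anticipate a genuine obstacle here, as the argument is short once Theorem \ref{t4} is in hand; the only point requiring a word of justification is that the $u$-closure of a sublattice is a sublattice, which is the standard fact that in a topological lattice the closure of a sublattice is closed under the lattice operations.
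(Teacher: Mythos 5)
Your proof is correct and is essentially the argument the paper intends: the corollary is stated as an immediate consequence of Theorem \ref{t4}, and the intended derivation is exactly yours — the trivial inclusion from $u^\ast\subseteq u$, plus the observation that $\overline{S}^{\,u}$ is a $u$-closed sublattice (by continuity of $\vee$ and $\wedge$ in the $u$-topology) to which Theorem \ref{t4} applies. Nothing is missing.
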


By an obvious modification of the proof of Theorem \ref{t4} one obtains an analogous result for unbounded order convergence, which generalizes \cite[Proposition 3.15]{GaTrXa2017}.

\begin{prop}\label{uo-o}
Suppose that $x_\alpha\uparrow x$ implies $x_\alpha\wedge y\uparrow x\wedge y$ and $x_\alpha\downarrow x$ implies $x_\alpha\vee y\downarrow x\vee y$ for every net $(x_\alpha)$ in $L$ and $x,y\in L$ (i.e. $L$ is continuous in the sense of von Neumann -- cf. \cite[Definition 2.14]{MaedaMaeda}). Then a sublattice $S$ of $L$ is $\mathfrak{u}O$-closed iff it is $O$-closed.
%($i\in\{1,2\}$). \footnote{cf. Corollary \ref{c3}(ii)}
\end{prop}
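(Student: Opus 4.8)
The plan is to adapt the proof of Theorem \ref{t4} essentially verbatim, replacing $u$-convergence by $O$-convergence and $u^\ast$-convergence by $\mathfrak{u}O$-convergence throughout. As in Corollary \ref{c5} and Corollary \ref{c4}, the equivalence of the $\mathfrak{u}O_1$- and $\mathfrak{u}O_2$-notions (established via Theorem \ref{t3} and Proposition \ref{p3}) means one may work with either mode interchangeably; so it suffices to prove the statement for, say, $\mathfrak{u}O_1$-convergence, since $\mathfrak{u}O$-closed is defined as $\mathfrak{u}O_1$-closed and $O$-closed is defined as $O_1$-closed. Since $\mathfrak{u}O_1$-convergence is weaker than $O_1$-convergence (every $O_1$-convergent net is $\mathfrak{u}O_1$-convergent to the same limit, because each $f_{s,t}$ is a lattice homomorphism and lattice homomorphisms preserve the monotone sandwiching nets $y_\gamma\uparrow x$, $z_\gamma\downarrow x$), every $\mathfrak{u}O$-closed sublattice is automatically $O$-closed. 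The substance is the converse.

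First I would assume $S$ is $O$-closed and take a net $(s_\alpha)_{\alpha\in A}$ in $S$ that $\mathfrak{u}O_1$-converges to some $x\in L$; I want to show $x\in S$. The crucial input replacing ``$f_{a,b}(s_\alpha)\to f_{a,b}(x)$ w.r.t.\ $u$'' is precisely the defining condition of $\mathfrak{u}O_1$-convergence: for every pair $s\le z$ in $L$ one has $f_{s,z}(s_\alpha)=(s_\alpha\wedge z)\vee s\, \converges{\text{$O_1$}}\,(x\wedge z)\vee s=f_{s,z}(x)$. This is exactly where the hypothesis of von Neumann continuity enters. To imitate the argument of Theorem \ref{t4} I need the analogue of the statement ``$(s_\alpha\vee s)\wedge z\to(x\vee s)\wedge z$ w.r.t.\ $u$''. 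In the uniform-lattice setting this followed freely from uniform continuity of $\vee$ and $\wedge$; here I must produce $O_1$-convergence of $(s_\alpha\vee s)\wedge z$ to $(x\vee s)\wedge z$ directly. The continuity hypothesis --- that $x_\alpha\uparrow x\Rightarrow x_\alpha\wedge y\uparrow x\wedge y$ and dually $x_\alpha\downarrow x\Rightarrow x_\alpha\vee y\downarrow x\vee y$ --- is tailored to guarantee that the sandwiching nets witnessing $\mathfrak{u}O_1$-convergence of $s_\alpha$ survive after applying $\cdot\wedge z$ and $\cdot\vee s$, so that $O_1$-convergence is preserved under these one-variable lattice operations.

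With that tool in hand the combinatorial skeleton is identical to Theorem \ref{t4}: fixing $\alpha'$ and setting $z:=s_{\alpha'}\vee s$, the net $(x\vee s)\wedge(s_\alpha\vee s)$ is eventually $O_1$-controlled and, since $S$ is $O_1$-closed and $(x\vee s)\wedge(s_{\alpha'}\vee s)$ arises as an $O_1$-limit of elements of $S$, one deduces $(x\vee s)\wedge(s_{\alpha'}\vee s)\in S$; letting $\alpha'$ vary and using von Neumann continuity again to pass to the limit in $\alpha'$, one gets $(x\vee s)\wedge(s_\alpha\vee s)\,\converges{\text{$O_1$}}\,x\vee s$, whence $x\vee s\in S$ for every $s\in S$. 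Dually $x\wedge s'\in S$ for every $s'\in S$, and then for a fixed $s\in S$ we get $x=x\wedge(x\vee s)\in S$.

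I expect the main obstacle to lie in verifying the two \emph{transfer} facts rather than in the outer logical structure: namely, that $\mathfrak{u}O_1$-convergence of $(s_\alpha)$ yields $O_1$-convergence of $\bigl((x\vee s)\wedge(s_\alpha\vee s)\bigr)_\alpha$, and that the relevant monotone controlling nets are preserved under $\cdot\wedge z$ and $\cdot\vee s$. Both reduce to unwinding Definition \ref{defi} for $O_1$-convergence and invoking the von Neumann continuity hypothesis at exactly the points where, in Theorem \ref{t4}, uniform continuity of the lattice operations was silently used. Once these are checked, the passage from the $u$/$u^\ast$ argument to the $O$/$\mathfrak{u}O$ argument is mechanical, which is why the authors describe it as ``an obvious modification of the proof of Theorem \ref{t4}''.
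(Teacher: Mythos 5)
Your outer skeleton coincides with the paper's: one direction by adapting the proof of Theorem \ref{t4}, the other by showing that $O$-convergence implies $\mathfrak{u}O$-convergence. However, you have inverted the role of the von Neumann continuity hypothesis, and in doing so you justify the easy direction by a claim that is false in general. You assert that every $O_1$-convergent net is $\mathfrak{u}O_1$-convergent ``because each $f_{s,t}$ is a lattice homomorphism and lattice homomorphisms preserve the monotone sandwiching nets $y_\gamma\uparrow x$, $z_\gamma\downarrow x$.'' A lattice homomorphism maps an increasing net to an increasing net, but it need not preserve the supremum: $f_{s,t}(y_\gamma)=(y_\gamma\wedge t)\vee s$ increases to $(x\wedge t)\vee s$ only if $y_\gamma\wedge t\uparrow x\wedge t$, and dually $f_{s,t}(z_\gamma)$ decreases to $f_{s,t}(x)$ only if $(z_\gamma\wedge t)\vee s\downarrow(x\wedge t)\vee s$. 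These are exactly the two halves of the von Neumann continuity assumption, and they can fail even in a distributive lattice (in the lattice of open subsets of $\mathbb{R}$ one has $(-1/n,1/n)\downarrow\emptyset$ while $(-1/n,1/n)\cup\bigl((-1,1)\setminus\{0\}\bigr)$ is constantly $(-1,1)$). The paper's proof of this direction consists of precisely the observation you skip: the hypothesis ``exactly means that $O$-convergence implies $\mathfrak{u}O$-convergence,'' whence every $\mathfrak{u}O$-closed set is $O$-closed. So the step you present as free is the one place where the hypothesis must be spent, and as written it is unjustified.

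Conversely, the hard direction ($O$-closed $\Rightarrow$ $\mathfrak{u}O$-closed) does not need the continuity hypothesis at all, contrary to what you say. The transfer you identify as the main obstacle --- that $\mathfrak{u}O_1$-convergence of $(s_\alpha)$ to $x$ yields $O_1$-convergence of $(s_\alpha\vee s)\wedge z$ to $(x\vee s)\wedge z$ for $s\le z$ --- is immediate from the definition: since $L$ is distributive (the standing assumption of Section 5), $(\,\cdot\,\vee s)\wedge z=g_{s,z}=f_{s,z}$ by Proposition \ref{d}, and $f_{s,z}(s_\alpha)\to f_{s,z}(x)$ in the $O_1$ sense is exactly what $\mathfrak{u}O_1$-convergence asserts. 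Likewise $(x\vee s)\wedge(s_\alpha\vee s)=f_{s,\,x\vee s}(s_\alpha)$ is $O_1$-convergent to $f_{s,\,x\vee s}(x)=x\vee s$ with no ``passage to the limit in $\alpha'$ via continuity.'' Once you relocate the hypothesis to the easy direction and delete it from the hard one, your argument matches the paper's.
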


\begin{proof}
$\Leftarrow$ can be proved as $\Leftarrow$ of Theorem \ref{t4} replacing $u^*$-convergence by $\mathfrak{u}O$-convergence and $u$-convergence by $O$-convergence. For $\Rightarrow$ observe that the additional assumption exactly means that $O$-convergence implies $\mathfrak{u}O$-convergence. Therefore any $\mathfrak{u}O$-closed subset of $L$ is $O$-closed.
\end{proof}

\begin{prop}\label{l4}
The following conditions are equivalent:
\begin{enumerate}[{\rm(i)}]
\item $u$ is locally exhaustive;
\item $u^\ast$ is locally exhaustive;
\item $u^\ast$ is exhaustive.
\end{enumerate}
\end{prop}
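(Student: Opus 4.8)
The plan is to prove the equivalence of the three conditions via the cycle (iii)$\Rightarrow$(ii)$\Rightarrow$(i)$\Rightarrow$(iii). The implication (iii)$\Rightarrow$(ii) is immediate, since exhaustivity trivially implies local exhaustivity (restricting to an order interval $[a,b]$ cannot destroy the property that every monotone net is Cauchy). For (ii)$\Rightarrow$(i) I would exploit the fact, established in Proposition~\ref{uj}(iii), that $u^\ast$ agrees with $u$ on every order interval $[a,b]$: if $u^\ast$ is locally exhaustive, then $\restr{u^\ast}{[a,b]}=\restr{u}{[a,b]}$ is exhaustive for every $a\le b$, which is precisely the statement that $u$ is locally exhaustive. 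So these two implications are essentially bookkeeping.

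The substantive content is (i)$\Rightarrow$(iii): local exhaustivity of $u$ forces $u^\ast$ to be \emph{globally} exhaustive. By the reduction recalled after Proposition~\ref{exh} (that is, \cite[Proposition 6.1]{W93}), it suffices to show that every monotone \emph{sequence} is $u^\ast$-Cauchy. Let $(x_n)$ be, say, an increasing sequence in $L$; I must produce, for each subbasic $u^\ast$-entourage $U_{a,b}$ (where $(a,b)\in J(L)$ and $U\in u$), an index $N$ beyond which all pairs $(x_m,x_n)$ land in $U_{a,b}$, i.e. $(f_{a,b}(x_m),f_{a,b}(x_n))\in U$. The key observation is that the image sequence $f_{a,b}(x_n)=(x_n\wedge b)\vee a$ is again monotone (since $f_{a,b}$ is order-preserving) and, crucially, is \emph{order bounded}, lying entirely inside the interval $[a,b]$. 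Since $u$ is locally exhaustive, $\restr{u}{[a,b]}$ is exhaustive, so the monotone sequence $(f_{a,b}(x_n))$ in $[a,b]$ is Cauchy with respect to $\restr{u}{[a,b]}$, hence with respect to $u$; this gives exactly the required $N$. Because a basic $u^\ast$-entourage is a finite intersection of subbasic ones $U_{a_1,b_1},\dots,U_{a_k,b_k}$, taking the maximum of the finitely many indices $N_1,\dots,N_k$ handles the general basic entourage, and the decreasing case is dual.

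The main obstacle, and the point requiring the most care, is precisely the passage from subbasic to basic entourages together with the reduction to sequences. One must be sure that the $u^\ast$-Cauchy condition can be tested on a \emph{subbase}: a net is Cauchy for a uniformity generated by a subbase $\mathcal S$ iff it is eventually small for every \emph{finite intersection} of members of $\mathcal S$, and the finiteness is what makes the ``max of finitely many $N_i$'' argument go through. I would state this explicitly and note that it is here that the definition $u^\ast=u_{J(L)}$ as the supremum of the $u_{a,b}$ — equivalently, the uniformity with subbase $\{U_{a,b}\}$ — is used in an essential way. Everything else reduces to the single clean fact that $f_{a,b}$ maps an arbitrary monotone sequence to a monotone sequence confined to the order interval $[a,b]$, on which local exhaustivity of $u$ provides the Cauchy estimate.
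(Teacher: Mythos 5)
Your proof is correct and follows essentially the same route as the paper's: the easy implications are handled via the coincidence of $u$ and $u^\ast$ on order intervals, and the substantive step (i)$\Rightarrow$(iii) rests on the same observation that $f_{a,b}$ sends a monotone net to a monotone, order-bounded net in $[a,b]$, which is $u$-Cauchy by local exhaustivity. The only differences are presentational — the paper argues directly with nets and treats the subbase issue as immediate from the definition of the initial uniformity, whereas you reduce to sequences via \cite[Proposition 6.1]{W93} and spell out the passage from subbasic to basic entourages.
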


\begin{proof}
Since $u$ and $u^*$ coincide on bounded sets by Corollary \ref{weakest}, we have {\rm(i)}$\Leftrightarrow${\rm(ii)}. {\rm(iii)}$\Rightarrow${\rm(ii)} is obvious.

{\rm (i)}$\Rightarrow${\rm(iii)}: By definition, a net  $(x_\gamma)_{\gamma\in\Gamma}$ is $u^\ast$-Cauchy iff $(f_{a,b}(x_\gamma))_{\gamma\in\Gamma}$ is $u$-Cauchy for every $(a,b)\in J(L)$. Let now $(x_\gamma)_{\gamma\in\Gamma}$ be a monotone net in $L$ and $(a,b)\in J(L)$. Then $(f_{a,b}(x_\gamma))_{\gamma\in\Gamma}$ is monotone and bounded, hence $u$-Cauchy by (i), i.e. $(x_\gamma)_{\gamma\in\Gamma}$ is $u^\ast$-Cauchy.
\end{proof}

\begin{theo}
Let $u$ and $v$ be Hausdorff lattice uniformities on $L$ satisfying Condition (C). Then:
\begin{enumerate}[{\rm(i)}]
\item the $u^\ast$-topology and the $v^\ast$-topology are equal;
\item $\overline{S}^{u}=\overline{S}^v$ for every sublattice $S$ of $L$.
\end{enumerate}
\end{theo}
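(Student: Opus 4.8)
The plan is to deduce both statements from the uniqueness result Theorem \ref{W93C7.2.4}, applied not to $u$ and $v$ directly but to the associated uniformities $u^\ast$ and $v^\ast$. Theorem \ref{W93C7.2.4} requires two \emph{exhaustive} lattice uniformities satisfying Condition (C) and having the same $N$; so the whole argument reduces to checking that $u^\ast$ and $v^\ast$ meet these three hypotheses, after which part (i) is immediate and part (ii) follows by combining it with Corollary \ref{c2}.

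First I would record that $u$ and $v$, satisfying Condition (C), are by Proposition \ref{7.1.2}(i) order continuous and locally exhaustive. Local exhaustiveness of $u$ then yields, via the equivalence in Proposition \ref{l4}, that $u^\ast$ is exhaustive; likewise $v^\ast$ is exhaustive. Next, for Condition (C): since $u^\ast\subseteq u$, every entourage of $u^\ast$ is an entourage of $u$, so the upper/lower bound furnished by Condition (C) for $u$ serves verbatim for $u^\ast$; hence $u^\ast$ (and symmetrically $v^\ast$) satisfies Condition (C). Finally, because $u$ and $v$ are Hausdorff, Corollary \ref{c1} gives that $u^\ast$ and $v^\ast$ are Hausdorff as well, so $N(u^\ast)=\Delta=N(v^\ast)$.

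With these three facts in hand, Theorem \ref{W93C7.2.4} applies to $u^\ast$ and $v^\ast$ and gives that the $u^\ast$-topology coincides with the $v^\ast$-topology, which is exactly (i). For (ii), I would invoke Corollary \ref{c2} (applied once to $u$ and once to $v$) to write $\overline{S}^{u}=\overline{S}^{u^\ast}$ and $\overline{S}^{v}=\overline{S}^{v^\ast}$ for every sublattice $S$; since by (i) the $u^\ast$- and $v^\ast$-topologies agree, $\overline{S}^{u^\ast}=\overline{S}^{v^\ast}$, and chaining the equalities gives $\overline{S}^{u}=\overline{S}^{v}$.

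The argument is essentially an assembly of earlier results, so the only place demanding genuine care is the passage to the starred uniformities: $u$ and $v$ themselves need not be exhaustive, as Condition (C) only delivers \emph{local} exhaustiveness, which is precisely why Theorem \ref{W93C7.2.4} cannot be applied to them directly. The crux is therefore Proposition \ref{l4}, which upgrades local exhaustiveness of $u$ to genuine exhaustiveness of $u^\ast$; verifying that Condition (C) descends along the inclusion $u^\ast\subseteq u$ is the other point to state carefully, though it is immediate once one notes that coarsening only shrinks the family of entourages over which Condition (C) quantifies.
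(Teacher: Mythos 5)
Your proposal is correct and follows essentially the same route as the paper: transfer Condition (C) to $u^\ast,v^\ast$ via $u^\ast\subseteq u$, obtain exhaustiveness of $u^\ast,v^\ast$ from Propositions \ref{7.1.2} and \ref{l4}, get the Hausdorff property from Corollary \ref{c1}, apply Theorem \ref{W93C7.2.4}, and deduce (ii) from Corollary \ref{c2}. The only (immaterial) difference is that the paper applies Proposition \ref{7.1.2} to $u^\ast$ itself and then uses the equivalence of local exhaustiveness and exhaustiveness for starred uniformities, whereas you apply it to $u$ and then invoke Proposition \ref{l4} in the form ``$u$ locally exhaustive $\Rightarrow$ $u^\ast$ exhaustive''.
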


\begin{proof}
{\rm(i)} First observe that the lattice uniformities $u^\ast$ and $v^\ast$ also satisfy Condition (C) since $u^*\subseteq u$ and $v^*\subseteq v$.  Therefore $u^\ast$ and $v^\ast$ are locally exhaustive by Proposition \ref{7.1.2}, hence exhaustive in view of Proposition \ref{l4}.  Moreover, Corollary \ref{c1} yields that $u^\ast$ and $v^\ast$ are  Hausdorff.  So, {\rm(i)} follows by Theorem \ref{W93C7.2.4}.

{\rm(ii)} follows from {\rm(i)} and Corollary \ref{c2}.
\end{proof}

Note that {\rm(ii)} of the above theorem generalizes \cite[Theorem 5.11]{T}.
%??

%To every uniform lattice $(L,u)$ it is possible to assign a Hausdorff uniform lattice by factorisation.  Let us outline this %briefly.  (For further details see [W91 Proposition 1.2.4].) Define the equivalence relation $\sim_u$ by setting $a\sim_u %b$ iff $(a,b)\in N(u)$. Let $\hat{a}$ denote the equivalence class containing the element $a\in L$.  The operations $\hat %a\vee \hat b=\widehat{a\vee b}$ and $\hat a\wedge \hat b=\widehat{a\wedge b}$ turn the quotient $L/\sss{\sim_u}$ %into a lattice.  Clearly, if $L$ is distributive, so is $L/\sss{\sim_u}$.  The family $\widehat u:=\{\widehat U: U\in u\}$, %where $\widehat U:=\{(\hat a,\hat b):(a,b)\in U\}$ is a Hausdorff lattice uniformity on $L/\sss{\sim_u}$

The following lemma is needed in the proof of Theorem \ref{t2} to reduce it to the Hausdorff case.

\begin{lem}\label{l3}  Let $(\hat{L},\hat{u})=(L/\sss{\sim_u}\,,\,\widehat u)$ be the Hausdorff uniform lattice associated with $(L,u)$ according to Proposition \ref{q}.  Then
\[\left(\hat{L}\,,\,\widehat{u^\ast}\right)=\left(\hat{L} \,, \,(\widehat{u})^\ast\right).\]

%\[\left(L/\sss{\sim_{u^\ast}}\,,\,\widehat{u^\ast}\right)=\left(L/\sss{\sim_u}\,, \,(\widehat{u})^\ast\right).\]
\end{lem}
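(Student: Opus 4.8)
The plan is to prove the identity $\widehat{u^\ast}=(\widehat u)^\ast$ by showing that both uniformities on $\hat L$ arise as the initial uniformity with respect to the \emph{same} family of functions, namely the maps $\hat f_{a,b}:(\hat L,\widehat u)\to(\hat L,\widehat u)$ given by $\hat f_{a,b}(\hat x)=\widehat{f_{a,b}(x)}$. The crucial preliminary observation is that $f_{a,b}$ is compatible with the equivalence relation $\sim_u$, so that $\hat f_{a,b}$ is well defined: indeed $f_{a,b}$ is uniformly continuous from $(L,u)$ to $(L,u)$, hence maps $N(u)$ into $N(u)$, so $(x,y)\in N(u)$ forces $(f_{a,b}(x),f_{a,b}(y))\in N(u)$. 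Moreover $\hat f_{a,b}=\hat f_{\hat a,\hat b}$ depends only on the classes $\hat a,\hat b$, and as $(a,b)$ ranges over $J(L)$ the pairs $(\hat a,\hat b)$ range over all of $J(\hat L)$.

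First I would compute $(\widehat u)^\ast$. By definition $(\widehat u)^\ast=(\widehat u)_{J(\hat L)}$ is the initial uniformity on $\hat L$ of the family $\{\hat f_{\hat a,\hat b}:(\hat a,\hat b)\in J(\hat L)\}$ with respect to $\widehat u$, having as a subbase the sets
\[
\{(\hat x,\hat y):(\hat f_{\hat a,\hat b}(\hat x),\hat f_{\hat a,\hat b}(\hat y))\in \hat U\}\qquad(\hat U\in\widehat u,\ (\hat a,\hat b)\in J(\hat L)).
\]
Next I would compute $\widehat{u^\ast}$. By Proposition \ref{q} applied to the uniformity $u^\ast$, the uniformity $\widehat{u^\ast}$ has as a base the sets $\widehat{W}$ for $W\in u^\ast$. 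Taking $W=U_{a,b}=\{(x,y):(f_{a,b}(x),f_{a,b}(y))\in U\}$ a subbasic member of $u^\ast=u_{J(L)}$ (for $U\in u$, $(a,b)\in J(L)$), the image $\widehat{U_{a,b}}$ consists of those $(\hat x,\hat y)$ for which some representatives satisfy $(f_{a,b}(x),f_{a,b}(y))\in U$; using the final sentence of Proposition \ref{q} (for $U$ closed one may pass between $(a,b)\in U$ and $(\hat a,\hat b)\in\hat U$) together with the well-definedness of $\hat f_{a,b}$, this translates precisely into $(\hat f_{\hat a,\hat b}(\hat x),\hat f_{\hat a,\hat b}(\hat y))\in\hat U$. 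Thus the subbasic sets for $\widehat{u^\ast}$ and for $(\widehat u)^\ast$ coincide, giving the two uniformities equal.

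The main obstacle I expect is the careful handling of the passage to quotients in the definition of $\widehat{W}$: a subbasic entourage $U_{a,b}$ of $u^\ast$ is not itself $u$-closed, so the clean equivalence ``$(x,y)\in U\iff(\hat x,\hat y)\in\hat U$'' from Proposition \ref{q} does not apply verbatim, and one must instead argue representative-wise. To do this cleanly I would first reduce to a semimetric description: by Remark \ref{r1} the uniformity $u^\ast=u_{J(L)}$ is generated by the semimetrics $d_{a,b}(x,y)=d(f_{a,b}(x),f_{a,b}(y))$ with $d\in D_u$ and $(a,b)\in J(L)$. Each such $d$ descends to a semimetric $\hat d$ on $\hat L$ because $d$ is $u$-uniformly continuous and hence constant on $N(u)$-related pairs; and $\widehat{d_{a,b}}=\hat d_{\hat a,\hat b}$ precisely because $\hat f_{a,b}$ is well defined. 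Matching the two generating families of semimetrics then yields the equality of uniformities without ever needing to close entourages, which is the technically smoothest route around the obstacle.
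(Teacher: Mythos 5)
Your proof is correct, but it follows a genuinely different route from the paper's. The paper never touches the entourages or semimetrics generating $u^\ast$: it first records that $N(u)=N(u^\ast)$ (Proposition \ref{p2}), so that both uniformities live on the same quotient $\hat L$, and then exploits the characterisation of $u^\ast$ as the \emph{weakest} lattice uniformity agreeing with $u$ on order-bounded sets (Corollary \ref{weakest}), together with the fact from Proposition \ref{q} that $\widehat{V\cap[a,b]^2}=\widehat{V}\cap[\hat a,\hat b]^2$ for closed $V$; this yields $\widehat{u^\ast}|_{[\hat a,\hat b]}=\widehat{u}|_{[\hat a,\hat b]}$, hence $\widehat{u^\ast}\supseteq(\widehat u)^\ast$ by minimality, and the order-isomorphism $v\mapsto\hat v$ between lattice uniformities with null set $N(u)$ gives the reverse inclusion. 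Your argument instead descends the generating data of the initial uniformity to the quotient, which is more explicit and quite serviceable; your decision to switch from entourages to the semimetrics of Remark \ref{r1} is exactly the right move, since it sidesteps the representative-wise problem you correctly identify (an entourage version is possible but needs a ``$3U$''-type enlargement). Two points you should make explicit when writing this up: first, you must observe up front that $N(u^\ast)=N(u)$ (by Proposition \ref{p2}, or directly because $f_{x\wedge y,\,x\vee y}$ fixes both $x$ and $y$), since otherwise $\widehat{u^\ast}$ is a priori a uniformity on the possibly coarser quotient $L/\sss{\sim_{u^\ast}}$ and the asserted equality does not even typecheck; second, identifying $(\widehat u)^\ast$ with the uniformity generated by $\{\hat d_{\hat a,\hat b}:d\in D_u\}$ uses the (true, routine) refinement of Remark \ref{r1} that $u_J$ is generated by $\{d_{a,b}\}$ with $d$ ranging over \emph{any} subfamily of $D(L)$ generating $u$ --- you only have the descended semimetrics $\hat d$, $d\in D_u$, at your disposal, not all of $D_{\widehat u}$.
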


\begin{proof}
Observe first that $N(u)=N(u^\ast)$ by Proposition \ref{p2}, i.e. $L/\sss{\sim_u}=L/\sss{\sim_{u^\ast}}$.  We show that $(\widehat{u})^\ast=\widehat{(u^\ast)}$.  We shall make use of the  following fact that follows immediately by  Proposition \ref{q}: If $v$ is a lattice uniformity on $L$ satisfying $N(u)=N(v)$, then
\[\widehat{V\cap[a,b]^2}=\widehat{V}\cap [\hat{a},\hat{b}]^2,\]
for every closed subset $V$ of $(L,v)^2$ and $a\le b$ in $L$.

Hence, for  every $a\le b$ in $L$ we have
\[\widehat{u^\ast}|_{[\hat{a},\,\hat{b}]}\,=\,\widehat{u^\ast|_{[a,\,b]}}\,=\,\widehat{u|_
{[a,\,b]}}=\widehat{u}|_{[\hat{a},\,\hat{b}]}\]
and therefore $\widehat{u^\ast}\supset \widehat{u}^\ast$.  Conversely, if $v$ is a lattice uniformity on $L$ with $N(v)=N(u)$ and $\hat{v}=\widehat{u}^\ast$, then $\widehat{v}|_{[\hat{a},\,\hat{b}]}=\widehat{u}|_{[\hat{a},\,\hat{b}]}$, and therefore $v|_{[a,\,b]}=u|_{[a,\,b]}$ for every $a\le b$.  This implies that $v\supset u^\ast$ and therefore $\widehat{u}^\ast=\widehat{v}\supset\widehat{u^\ast}$.
\end{proof}

Let $u$ be Hausdorff and $(\tilde L,\tilde u)$ the completion of $(L,u)$.  It is easy to see that $\tilde L$ is again distributive, when $L$ is distributive.   Moreover, both properties of local exhaustivity and exhaustivity pass directly to this completion by Proposition \ref{exh}.

\begin{theo}\label{t2}
Let $(L,u)$ be a uniform lattice.  If $u$ is exhaustive, then $u=u^\ast$.
\end{theo}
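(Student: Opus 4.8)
The plan is to prove the missing inclusion $u\subseteq u^\ast$, since $u^\ast\subseteq u$ holds in general (Proposition \ref{uj}(i)). I would first reduce to the Hausdorff case. By Corollary \ref{weakest}(ii) the uniformities $u$ and $u^\ast$ agree on order bounded sets, so Proposition \ref{p2} gives $N(u)=N(u^\ast)$; hence $\sim_u\,=\,\sim_{u^\ast}$, the two uniformities share the same quotient lattice $\hat L=L/\!\sim_u$, and Lemma \ref{l3} yields $\widehat{u^\ast}=(\hat u)^\ast$. Because the quotient map $\pi\colon L\to\hat L$ realises $u$ as the inverse–image uniformity $\pi^{-1}(\hat u)$ (one computes $\pi^{-1}(\hat U)=N(u)\circ U\circ N(u)$, and $N(u)\circ V\circ N(u)\subseteq U$ whenever $V\circ V\circ V\subseteq U$, using $N(u)\subseteq V$), the uniformity $u$ is completely determined by $\hat u$ together with $N(u)$; the identical argument applies to $u^\ast$. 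Thus it suffices to establish $\hat u=(\hat u)^\ast$, for this gives $\hat u=\widehat{u^\ast}$ and hence $u=\pi^{-1}(\hat u)=\pi^{-1}(\widehat{u^\ast})=u^\ast$.

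To legitimise this reduction I must check that $\hat u$ is again exhaustive. Since exhaustivity is equivalent to every monotone \emph{sequence} being Cauchy, I would take an increasing sequence $(\hat x_n)$ in $\hat L$, choose arbitrary representatives $x_n\in L$, and pass to $y_n:=x_1\vee\dots\vee x_n$, which is increasing in $L$ and satisfies $\pi(y_n)=\hat x_n$. Exhaustivity of $u$ makes $(y_n)$ Cauchy, and uniform continuity of $\pi$ then makes $(\hat x_n)$ Cauchy; the decreasing case is dual. So $\hat u$ is exhaustive, and $\hat L$ is distributive as a lattice quotient of $L$. Hence we may assume from now on that $u$ is Hausdorff.

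Assume $u$ is Hausdorff and let $(\tilde L,\tilde u)$ be its completion; $\tilde L$ is distributive and, by Proposition \ref{exh} applied to the dense sublattice $L$, $\tilde u$ is exhaustive. The decisive observation is that, being complete as a uniform space and exhaustive, $(\tilde L,\le)$ is a \emph{complete} lattice by Corollary \ref{com} (equivalently Theorem \ref{t1}), and therefore has a least element $\bot=\inf\tilde L$ and a greatest element $\top=\sup\tilde L$. For every $x\in\tilde L$ we then get $f_{\bot,\top}(x)=(x\wedge\top)\vee\bot=x$, i.e.\ $f_{\bot,\top}=\mathrm{id}_{\tilde L}$, so the initial uniformity $\tilde u_{\bot,\top}$ of $f_{\bot,\top}$ is $\tilde u$ itself. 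Since $(\bot,\top)\in J(\tilde L)$,
\[
\tilde u=\tilde u_{\bot,\top}\subseteq(\tilde u)^\ast\subseteq\tilde u,
\]
whence $(\tilde u)^\ast=\tilde u$: on a complete lattice the whole space is order bounded, so starring changes nothing.

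Finally I would descend this equality to $L$. As $L$ is a dense sublattice of $(\tilde L,\tilde u)$ with $\tilde u|_L=u$, Corollary \ref{denseS} gives $(\tilde u)^\ast|_L=(\tilde u|_L)^\ast=u^\ast$; combined with $(\tilde u)^\ast=\tilde u$ this yields $u^\ast=\tilde u|_L=u$, finishing the Hausdorff case and, via the first paragraph, the theorem. I expect the only real friction to be the bookkeeping of the first two paragraphs — that exhaustivity survives the quotient and that equality of the associated Hausdorff uniformities forces $u=u^\ast$ — whereas the substantive step, that completeness of $\tilde L$ furnishes global bounds $\bot,\top$ collapsing $(\tilde u)^\ast$ onto $\tilde u$, is essentially immediate once the completion is available.
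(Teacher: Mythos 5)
Your proposal is correct and follows essentially the same route as the paper's proof: reduce to the Hausdorff case via the quotient $\hat L=L/\!\sim_u$ and Lemma \ref{l3}, then pass to the completion $(\tilde L,\tilde u)$, use Corollary \ref{com} to get a complete lattice (so $(\tilde u)^\ast=\tilde u$ because $f_{\bot,\top}=\mathrm{id}$), and descend with Corollary \ref{denseS}. You merely present the two steps in the opposite order and spell out some details the paper leaves implicit (exhaustivity of $\hat u$, and that $N(u)=N(u^\ast)$ together with $\widehat u=\widehat{u^\ast}$ forces $u=u^\ast$).
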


\begin{proof}
{\rm (i)} Let us first suppose that $u$ is Hausdorff.  Let $(\tilde L,\tilde u)$ denote the completion of $(L,u)$.  Then $\tilde L$ is a complete lattice by Corollary \ref{com} and therefore obviously $\tilde u=(\tilde u)^\ast$.  On the other hand, $\restr{(\tilde u)^\ast}{S}=u^\ast$ by Corollary \ref{denseS}.  Combining we get $u=\restr{\tilde u}{S}=\restr{(\tilde u)^\ast}{S}=u^\ast$.

{\rm (ii)} In general, if $u$ is exhaustive, $\widehat{u}$ is Hausdorff and exhaustive, and therefore $\widehat{u}=(\widehat{u})^\ast$ by (i).  Therefore, by Lemma \ref{l3}, we obtain $\widehat{u}=\widehat{u^\ast}$.  Since $N(u)=N(u^\ast)$, this implies\footnote{It is easy to see and follows from Proposition \ref{q} that if $u$ and $v$ are two lattice uniformities on $L$ satisfying $N(u)=N(v)$, then the induced uniformities $\widehat{u}$ and $\widehat{v}$ on the common quotient $L/\sss{\sim_u}=L/\sss{\sim_v}$ are equal iff $u$ and $v$ are equal.} that $u=u^\ast$.
\end{proof}

\begin{cor}\label{sub*}
Let $(L,u)$ be a uniform lattice and let $u$ be locally exhaustive.  Then $\restr{u^\ast}{S}=(\restr{u}{S})^\ast$ for every sublattice $S$ of $L$.
\end{cor}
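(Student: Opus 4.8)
The goal is to prove Corollary \ref{sub*}: for a locally exhaustive uniform lattice $(L,u)$ and any sublattice $S$, we have $\restr{u^\ast}{S}=(\restr{u}{S})^\ast$.

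The plan is to reduce the problem to the dense-sublattice case already handled by Corollary \ref{denseS}. The natural strategy is to interpose the closure of $S$ in $(L,u)$. Let me write $\bar{S}$ for the $u$-closure of $S$ in $L$. Since $\bar S$ is a sublattice (the lattice operations are $u$-continuous, so the closure of a sublattice is again a sublattice) and $S$ is \emph{dense} in $(\bar S,\restr{u}{\bar S})$, Corollary \ref{denseS} applies to the pair $S\subseteq \bar S$ with $v=\restr{u}{S}$. This yields
\[
\restr{(\restr{u}{\bar S})^\ast}{S}=(\restr{u}{S})^\ast.
\]
So the first key step is to identify $\restr{(\restr{u}{\bar S})^\ast}{S}$ with $\restr{u^\ast}{S}$, which would finish the proof. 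In other words, the remaining task is to show that passing to $u^\ast$ commutes with restriction to the sublattice $\bar S$, i.e. $\restr{u^\ast}{\bar S}=(\restr{u}{\bar S})^\ast$.

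The heart of the argument is therefore to establish $\restr{u^\ast}{\bar S}=(\restr{u}{\bar S})^\ast$, and this is where local exhaustivity must be used. First I would note that $\restr{u}{\bar S}$ is again locally exhaustive, since local exhaustivity is defined via the order intervals $[a,b]\cap \bar S$ and these sit inside the corresponding intervals of $L$ (order intervals relative to $\bar S$ are contained in order intervals of $L$ restricted to $\bar S$, where $u$ and $\restr{u}{\bar S}$ agree). The key point is that on $\bar S$ one can compute $(\restr{u}{\bar S})^\ast$ using the initial-uniformity description: it is generated by the semimetrics $d_{a,b}$ with $(a,b)\in J(\bar S)$ and $d\in D_{\restr{u}{\bar S}}$. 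The subtle issue is that $u^\ast$ is built from $J(L)$, i.e. from \emph{all} order-bounded intervals of $L$, whereas $(\restr{u}{\bar S})^\ast$ only sees intervals with endpoints in $\bar S$; I need that restricting to $\bar S$ these coincide. Using Proposition \ref{barJ} together with the density of $\bar S$-valued brackets (the same approximation trick as in Corollary \ref{denseS}, where $J(L)\subseteq\widetilde{J(\bar S)}$ because every pair in $L^2$ is a $u$-limit of $\bar S$-pairs) should give $\restr{u^\ast}{\bar S}=(\restr{u}{\bar S})^\ast$ directly, \emph{without} even needing exhaustivity here.

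The main obstacle, and the reason local exhaustivity is hypothesized, lies in justifying that $\bar S$ behaves well enough as an intermediary: specifically, one must be certain that the closure operations and the $u^\ast$-construction interact correctly, and that restricting $u^\ast$ from $L$ to the closed sublattice $\bar S$ genuinely reproduces the $\ast$-construction computed \emph{intrinsically} on $\bar S$. I expect the cleanest route is to assemble the chain
\[
\restr{u^\ast}{S}=\restr{\bigl(\restr{u^\ast}{\bar S}\bigr)}{S}=\restr{\bigl((\restr{u}{\bar S})^\ast\bigr)}{S}=(\restr{u}{S})^\ast,
\]
where the middle equality is the commutation established above and the last equality is Corollary \ref{denseS} applied to the dense inclusion $S\subseteq\bar S$. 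The delicate verification is the middle equality; local exhaustivity ensures (via Proposition \ref{l4}) that $u^\ast$ is exhaustive, and exhaustivity is exactly what lets one control Cauchy behaviour of monotone nets on $\bar S$, which is the technical ingredient needed to match the two $\ast$-constructions on intervals straddling the boundary of $S$. Once the middle equality is in hand, the corollary follows immediately.
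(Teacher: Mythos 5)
Your reduction has a genuine gap at its central step. You interpose the $u$-closure $\bar S$ of $S$ and correctly apply Corollary \ref{denseS} to the dense inclusion $S\subseteq\bar S$, but the ``middle equality'' $\restr{u^\ast}{\bar S}=(\restr{u}{\bar S})^\ast$ is never actually established -- and it is precisely an instance of the corollary you are trying to prove, now for the closed sublattice $\bar S$, so the detour has not reduced the difficulty at all. The route you suggest for it fails: the approximation trick behind Corollary \ref{denseS} and Proposition \ref{barJ} requires $J(L)\subseteq\widetilde{J(\bar S)}$, i.e.\ that every pair in $J(L)$ is a $u$-limit of pairs from $\bar S$, which forces $\bar S$ to be dense in $L$; but $\bar S$ is \emph{closed} in $L$, so this holds only when $\bar S=L$. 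Your claim that the closed-sublattice case goes through ``without even needing exhaustivity'' should itself have been a warning sign: the easy inclusion $(\restr{u}{\bar S})^\ast\subseteq\restr{u^\ast}{\bar S}$ does hold unconditionally (by the minimality in Proposition \ref{uj}(iii)), but the reverse inclusion requires controlling $f_{a,b}$ for brackets $(a,b)\in J(L)$ with endpoints outside $\bar S$, whose images need not lie in $\bar S$, and no density argument is available. The closing appeal to ``Cauchy behaviour of monotone nets on $\bar S$'' is a gesture at where exhaustivity should enter, not a proof.

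The paper's argument is shorter and avoids closures entirely. By Proposition \ref{l4}, local exhaustivity of $u$ makes $u^\ast$ exhaustive; hence $\restr{u^\ast}{S}$ is exhaustive (a monotone net in $S$ is monotone in $L$), and Theorem \ref{t2} gives $(\restr{u^\ast}{S})^\ast=\restr{u^\ast}{S}$. Since $\restr{u^\ast}{S}\subseteq\restr{u}{S}$, monotonicity of the $\ast$-construction yields $(\restr{u^\ast}{S})^\ast\subseteq(\restr{u}{S})^\ast$; and since $\restr{u^\ast}{S}$ is a lattice uniformity on $S$ agreeing with $\restr{u}{S}$ on all order-bounded subsets of $S$, the minimality characterization gives $(\restr{u}{S})^\ast\subseteq\restr{u^\ast}{S}$. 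The chain
\[
(\restr{u}{S})^\ast\supseteq(\restr{u^\ast}{S})^\ast=\restr{u^\ast}{S}\supseteq(\restr{u}{S})^\ast
\]
closes the argument for an \emph{arbitrary} sublattice in one stroke. If you want to keep your two-step structure, you would still have to prove the closed-sublattice case by essentially this argument, at which point the detour through $\bar S$ is superfluous.
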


\begin{proof}
By Proposition 	\ref{l4} $u^*$ is exhaustive. Therefore $u^*|_S$ is exhaustive, hence $(u^*|_S)^*=u^*|_S$ by Theorem \ref{t2}. Since $u|_S\supseteq u^*|_S$, we have $(u|_S)^*\supseteq (u^*|_S)^*$. Moreover, $u^*|_S\supseteq (u|_S)^*$ since $u^*|_S$ agrees with $u|_S$ on order-bounded subsets of $S$ (c.f. Proposition \ref{uj} (iii) ). We have seen:
$$(u|_S)^*\supseteq (u^*|_S)^*=u^*|_S\supseteq (u|_S)^*,$$
$\restr{u^\ast}{S}=(\restr{u}{S})^\ast$.
\end{proof}

 Since for locally solid Riesz spaces the Lebesgue property implies the pre-Lebesgue property,  Corollary \ref{sub*} generalizes \cite[Lemma 9.7]{T} (therefore \cite[Corollary 4.6]{KMT}).

\bibliographystyle{amsplain}
\providecommand{\bysame}{\leavevmode\hbox to3em{\hrulefill}\thinspace}
\providecommand{\MR}{\relax\ifhmode\unskip\space\fi MR }
% \MRhref is called by the amsart/book/proc definition of \MR.
\providecommand{\MRhref}[2]{%
  \href{http://www.ams.org/mathscinet-getitem?mr=#1}{#2}
}
\providecommand{\href}[2]{#2}

\end{document}